\def\lineThickness{.7}
\tikzset{every picture/.append style={scale=.5,
thinLine/.style={line width=\lineThickness pt},
thickLine/.style={line width=2*\lineThickness pt,line join=round}},
fillGrey/.style={fill=black!30},
entry/.style={xshift=5mm,yshift=5mm,font=\footnotesize},
circled/.style={circle,draw,inner sep=0pt,minimum size=1em}
}
\newtheorem{thm}{Theorem}[section]
\newtheorem{conj}[thm]{Conjecture}
\newtheorem{lem}[thm]{Lemma}
\newtheorem{prop}[thm]{Proposition}
\newtheorem{cor}[thm]{Corollary}
\newtheorem{prob}[thm]{Open problem}
\crefname{defn}{Definition}{Definitions}
\crefname{lem}{Lemma}{Lemmata}
\crefname{thm}{Theorem}{Theorems}
\crefname{prop}{Proposition}{Propositions}
\crefname{cor}{Corollary}{Corollaries}
\crefname{fig}{Figure}{Figures}
\crefname{conj}{Conjecture}{Conjectures}
\crefname{sec}{Section}{Sections}
\newcommand{\abs}[1]{\left| #1 \right|}
\newcommand{\bop}[1]{\boldsymbol{\operatorname{#1}}}
\newcommand{\ceil}[1]{\left\lceil #1 \right\rceil}
\newcommand{\e}{\bop{e}}
\newcommand{\matd}{\mathrm{d}}
\newcommand{\E}{\mathbb{E}}
\newcommand{\n}{\bop{n}}
\newcommand{\N}{\mathbb{N}}
\newcommand{\norm}[1]{\left\| #1 \right\|_{\infty}}
\renewcommand{\P}{\mathbb{P}}
\newcommand{\pr}{\operatorname{pr}}
\newcommand{\R}{\mathbb{R}}
\newcommand{\s}{\bop{s}}
\renewcommand{\S}{\mathfrak{S}}
\newcommand{\sh}{\operatorname{sh}}
\newcommand{\shf}[1]{f_{#1}^{\operatorname{sh}}}
\newcommand{\shh}[1]{h_{#1}^{\operatorname{sh}}}
\newcommand{\tab}{T}
\newcommand{\T}[1]{\mathcal{T}_{#1}}
\newcommand{\w}{\bop{w}}
\newcommand{\wo}{w_{0}}
\newcommand{\Y}{\mathcal{Y}}
\newcommand{\yd}[1]{#1^{\operatorname{dg}}}
\newcommand{\shyd}[1]{#1^{\operatorname{sh}}}
\newcommand{\Z}{\mathbb{Z}}
\begin{document}
\title[On random shifted SYT and 132-avoiding sorting networks]{On random shifted standard Young tableaux and 132-avoiding sorting networks}

\author{Svante Linusson, Samu Potka and Robin Sulzgruber}
\address{Department of Mathematics, KTH Royal Institute of Technology, Stockholm, Sweden.}
\email{\href{mailto:linusson@kth.se}{linusson@kth.se}, \href{mailto:potka@kth.se}{potka@kth.se}, \href{mailto:robinsul@kth.se}{robinsul@kth.se}}
\thanks{The first two authors were supported by the Swedish Research Council, grant 621-2014-4780.}

\begin{abstract}
We study shifted standard Young tableaux (SYT). The limiting surface of uniformly random shifted SYT of staircase shape is determined, with the integers in the SYT as heights. This implies via properties of the Edelman--Greene bijection results about random 132-avoiding sorting networks, including limit shapes for trajectories and intermediate permutations. Moreover, the expected number of adjacencies in SYT is considered. It is shown that on average each row and each column of a shifted SYT of staircase shape contains precisely one adjacency. 
\end{abstract}

\maketitle

\begin{figure}[t]
\begin{subfigure}{.45\textwidth}
\centering
\begin{tikzpicture}
\begin{scope}
\draw[thinLine](3,1)--(4,1)(2,2)--(4,2)(1,3)--(4,3)(1,3)--(1,4)(2,2)--(2,4)(3,1)--(3,4);
\draw[thickLine](0,3)--(1,3)--(1,2)--(2,2)--(2,1)--(3,1)--(3,0)--(4,0)--(4,4)--(0,4)--cycle;
\draw[entry](0,3)node{$1$}(1,3)node{$2$}(2,3)node{$4$}(3,3)node{$5$}(1,2)node{$3$}(2,2)node{$6$}(3,2)node{$7$}(2,1)node{$8$}(3,1)node{$9$}(3,0)node{$10$};
\end{scope}
\end{tikzpicture}
\caption{A shifted SYT of staircase shape.}
\label{shifted_example}
\end{subfigure}
\begin{subfigure}{.45\textwidth}
\centering
\includegraphics[width=.9\textwidth]{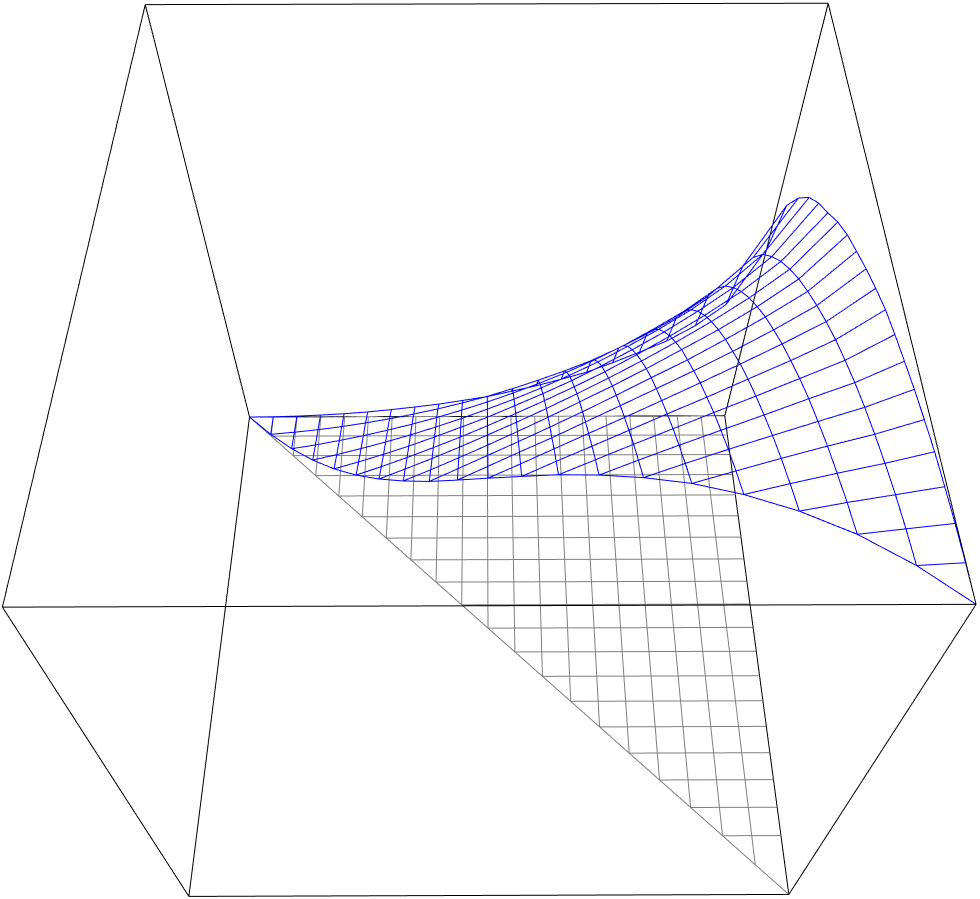}
\bigskip
\caption{The limit shape of uniformly random shifted SYT of staircase shape.}
\label[fig]{shape}
\end{subfigure}
\caption{}
\end{figure}

\section{Introduction}
A shifted standard Young tableau (SYT) of staircase shape is an increasing filling of the shifted diagram of the partition $(n-1,\dots,2,1)$ with the integers $1,2,\dots,\binom{n}{2}$.
See \cref{shifted_example} for an example and \cref{S:background} for the exact definition.
Shifted diagrams and tableaux are important combinatorial objects that appear in various contexts.
In representation theory shifted Young diagrams correspond to projective characters of the symmetric group, and shifted tableaux lend themselves to being studied via RSK-type methods~\cite{Sagan:shifted_tableaux,Stembridge:shifted_tableaux}.
In the theory of partially ordered sets shifted diagrams alongside non-shifted Young diagrams and rooted trees form the three most interesting families of $d$-complete posets, which are in turn connected to fully commutative elements of Coxeter groups~\cite{Stembridge:fully_commutative,Proc:d_complete_poset}.
The most salient property of $d$-complete posets is the fact that their linear extensions (in our case shifted SYT) are enumerated by elegant product formulas. 
Shifted diagrams also appear as order filters in the root poset of type $B_n$, and shifted SYT play an important role in the enumeration of reduced words of elements of the Coxeter group of type $B_n$~\cite{Haiman:mixed_insertion,Kraskiewicz:reduced_decompositions_hyperoctahedral}.
Moreover, as is topical in this paper, shifted SYT are also relevant to the study of certain reduced words in the symmetric group.
Recently Elizalde and Roichman related shifted diagrams and tableaux to unimodal permutations~\cite{ER14}.

\smallskip
The topics of this paper can be divided into three parts.

\smallskip
In \cref{S:shape} we study the surface obtained by thinking of the integers in random SYT as heights.
The study of limit phenomena for partitions and tableaux is an active field of research combining methods from combinatorics, probability theory and analysis.
We refer to~\cite{Romik} for a general survey.
Shifted objects have been treated as well, for example Ivanov~\cite{Ivanov} proves a central limit theorem for the Plancherel measure on shifted diagrams.
In the present paper we determine the limiting surface for uniformly random shifted SYT of staircase shape, see \cref{shape} and \cref{limit_shape}.
The deduction of our results relies on a paper by Pittel and Romik~\cite{PR07} where the limit shape for random rectangular SYT is determined. In fact, we end up with the same variational problem, and the limit surface for shifted staircase SYT is half of the surface for square SYT. This analogy is in part explained by a combinatorial identity \eqref{eq:hL} relating shifted and non-shifted tableaux.
There are very few shapes for which the limit surface has been determined previously. As far as we know the only other case is that of staircase SYT, where again the same limit surface appears, but cut along a different diagonal~\cite{AHRV07}.
Results of this type have applications in other fields of mathematics such as geometric complexity theory~\cite{PS:skew_howe_duality}.


\smallskip
Secondly, we study \emph{132-avoiding sorting networks}, which are by definition reduced words $w_1 \dots w_{\binom n2}$ of the reverse permutation such that $s_{w_1} \cdots s_{w_k}$ is 132-avoiding for any $1 \leq k \leq \binom n2$.
These objects have received considerable recent interest and also appear in different guises, for example as chains of maximum length in the Tamari lattice~\cite{BW97}.
Fishel and Nelson~\cite{FN14} showed that 132-avoiding sorting networks are in bijection with shifted SYT of staircase shape via the Edelman--Greene correspondence.
This has been rediscovered several times~\cite{STWW,DavisSagan,LP18}. They are also in bijection with reduced words of the signed permutation $(-(n-1), -(n-2), \dots, -1)$ via the shift $s_i \mapsto s_{i-1}$ as was remarked in~\cite[Sec.~1.3]{STWW}.
In \cref{S:permutations,S:trajectories} the Edelman--Greene bijection is used to transfer the limit shape of shifted SYT to determine the limit shapes of intermediate permutations (\cref{intermediate_permutations}) and trajectories (\cref{scaled_trajectory}) in random 132-avoiding sorting networks.
These results are motivated by a remarkable paper of Angel, Holroyd, Romik and Vir{\'a}g~\cite{AHRV07} that contains a number of tantalising conjectures about random sorting networks.
See \cref{S:background} for a description of some of the conjectures from~\cite{AHRV07}.
Our results are a parallel to their conjectures restricted to a subclass of random sorting networks. 
Recently a proof of the original conjectures in \cite{AHRV07} was announced by Dauvergne~\cite{D18}.

We remark that the limit surface for shifted SYT of staircase shape contains complete information on the limit surface for SYT of square shape.
This suggests the perhaps less intuitive idea that the relatively small subset of 132-avoiding sorting networks contains a lot of information on random sorting networks in general.  

\smallskip
The third set of results is obtained in \cref{S:adj} and concerns patterns in 132-avoiding sorting networks.
We first observe that adjacencies in a shifted SYT (that is, integers $i$ and $i+1$ in neighbouring cells) translate directly to adjacencies in a 132-avoiding sorting network 
(that is, $j$ and $j+1$ next to each other in the reduced word). \cref{expected_adjacencies} asserts that the expected number of adjacencies in each column and each row in a shifted SYT of staircase shape is exactly 1. The proof uses promotion 
and evacuation techniques very similar to the methods used by Schilling, Thi{\'e}ry, White and Williams~\cite{STWW} to derive results on Yang--Baxter moves (that is, patterns of the form $j(j\pm1)j$) in 132-avoiding sorting networks.
Related results on general sorting networks are due to Reiner~\cite{Reiner} and Tenner~\cite{Tenner}.

\section{Background}\label{S:background}

In this section we fix notation and review some facts about partitions, tableaux and random sorting networks.

For $n\in\N$ let $[n]=\{1,\dots,n\}$.
Throughout this paper we denote $N=\binom{n}{2}$.

\subsection{Partitions and tableaux}\label{S:tableaux}

A \emph{partition} is a weakly decreasing sequence $\lambda=(\lambda_1,\dots,\lambda_n)$ of positive integers.
If a partition is strictly decreasing it is called \emph{strict}.
The sum $\sum\lambda_i$ is called the \emph{size} of the partition $\lambda$ and is denoted by $\abs{\lambda}$.
The number of entries $\lambda_i$ is called the \emph{length} of the partition and is denoted by $\ell(\lambda)$.
Define the \emph{staircase} partition as $\Delta_n=(n-1,\dots,2,1)$.
The \emph{Young diagram} of a partition $\lambda$ is defined as the set
\begin{equation*}
\yd{\lambda}=\{(i,j):i\in[\ell(\lambda)],j\in[\lambda_i]\}.
\end{equation*}
The elements $(i,j)$ are indexed with matrix notation and typically referred to as \emph{cells} of $\lambda$.
The \emph{conjugate partition} $\lambda'$ of the partition $\lambda$ is the partition corresponding to the Young diagram $\{(j,i):(i,j)\in\yd{\lambda}\}$.
Given a strict partition $\lambda$ we also define its \emph{shifted Young diagram} as
\begin{equation*}
\shyd{\lambda}=\{(i,j+i-1):i\in[\ell(\lambda)],j\in[\lambda_i]\}.
\end{equation*}
Thus the shifted Young diagram is obtained from the normal Young diagram by shifting rows to the right.
These definitions are illustrated in \cref{young_diagram}.

\begin{figure}[t]
\centering
\begin{tikzpicture}
\begin{scope}
\draw[thinLine](0,1)--(1,1)(0,2)--(2,2)(0,3)--(4,3)(1,1)--(1,4)(2,2)--(2,4)(3,2)--(3,4)(4,3)--(4,4)(5,3)--(5,4);
\draw[thickLine](0,0)--(1,0)--(1,1)--(2,1)--(2,2)--(4,2)--(4,3)--(6,3)--(6,4)--(0,4)--cycle;
\end{scope}
\begin{scope}[xshift=10cm]
\draw[thinLine](3,1)--(4,1)(2,2)--(4,2)(1,3)--(5,3)(1,3)--(1,4)(2,2)--(2,4)(3,1)--(3,4)(4,2)--(4,4)(5,3)--(5,4);
\draw[thickLine](0,3)--(1,3)--(1,2)--(2,2)--(2,1)--(3,1)--(3,0)--(4,0)--(4,2)--(5,2)--(5,3)--(6,3)--(6,4)--(0,4)--cycle;
\end{scope}
\end{tikzpicture}
\caption{The Young diagram $\yd{\lambda}$ (left) and the shifted Young diagram $\shyd{\lambda}$ (right) of the strict partition $\lambda=(6,4,2,1)$ drawn in \emph{English convention}, that is, $(1,1)$ is the top left cell.
We have $\abs{\lambda}=13$, $\ell(\lambda)=4$ and $\lambda'=(4,3,2,2,1,1)$.}
\label[fig]{young_diagram}
\end{figure}
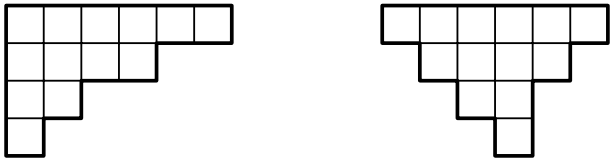

Let $u=(i,j)$ be a cell in the Young diagram of a partition $\lambda$.
The \emph{hook-length} of $u$, denoted by $h_{\lambda}(u)$, is defined as the number of cells in the same row as $u$ that lie to the right of $u$ plus the number of cells in the same column as $u$ that lie weakly below $u$ (that is, including $u$ itself).
Thus, $h_{\lambda}(u)=\lambda_i-j+\lambda_j'-i+1$.
Given a cell $u=(i,j)$ in the shifted Young diagram of a strict partition, we define its \emph{shifted hook-length} $\shh{\lambda}(u)$ as the number of cells in the same row as $u$ that lie to the right of $u$, respectively in the same column and weakly below $u$, plus the number of cells in the $(i+j)$-th row of the shifted Young diagram.
See \cref{hook_lengths}.

\begin{figure}[t]
\centering
\begin{tikzpicture}
\begin{scope}
\draw[thinLine](0,1)--(1,1)(0,2)--(2,2)(0,3)--(4,3)(1,1)--(1,4)(2,2)--(2,4)(3,2)--(3,4)(4,3)--(4,4)(5,3)--(5,4);
\draw[thickLine](0,0)--(1,0)--(1,1)--(2,1)--(2,2)--(4,2)--(4,3)--(6,3)--(6,4)--(0,4)--cycle;
\draw[entry](0,3)node{$9$}(1,3)node{$7$}(2,3)node{$5$}(3,3)node{$4$}(4,3)node{$2$}(5,3)node{$1$}(0,2)node{$6$}(1,2)node{$4$}(2,2)node{$2$}(3,2)node{$1$}(0,1)node{$3$}(1,1)node{$1$}(0,0)node{$1$};
\end{scope}
\begin{scope}[xshift=10cm]
\draw[thinLine](3,1)--(4,1)(2,2)--(4,2)(1,3)--(5,3)(1,3)--(1,4)(2,2)--(2,4)(3,1)--(3,4)(4,2)--(4,4)(5,3)--(5,4);
\draw[thickLine](0,3)--(1,3)--(1,2)--(2,2)--(2,1)--(3,1)--(3,0)--(4,0)--(4,2)--(5,2)--(5,3)--(6,3)--(6,4)--(0,4)--cycle;
\draw[entry](0,3)node{$10$}(1,3)node{$8$}(2,3)node{$7$}(3,3)node{$6$}(4,3)node{$3$}(5,3)node{$1$}(1,2)node{$6$}(2,2)node{$5$}(3,2)node{$4$}(4,2)node{$1$}(2,1)node{$3$}(3,1)node{$2$}(3,0)node{$1$};
\end{scope}
\end{tikzpicture}
\caption{The (shifted) Young diagram of $\lambda=(6,4,2,1)$ with the (shifted) hook-lengths filled into each cell.}
\label[fig]{hook_lengths}
\end{figure}
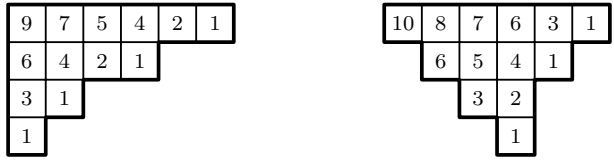

Given a cell $u=(i,j)\in\Z^2$, define the \emph{north}, \emph{east}, \emph{south}, and \emph{west neighbour} of $u$ as
\begin{equation*}
\n u=(i-1,j),\quad
\e u=(i,j+1),\quad
\s u=(i+1,j)\quad\text{and}\quad
\w u=(i,j-1),
\end{equation*}
respectively.
A \emph{tableau} of \emph{shape} $\yd{\lambda}$ is a map $\tab:\yd{\lambda}\to\Z$.
A tableau $\tab$ is called a \emph{standard Young tableau (SYT)} if $\tab:\yd{\lambda}\to[n]$ is a bijection and $\tab(u)<\tab(\e u)$ and $\tab(u)<\tab(\s u)$ whenever the respective cells lie in $\yd{\lambda}$.
Similarly a \emph{shifted standard Young tableau} of \emph{shape} $\shyd{\lambda}$ is a bijection $\tab:\shyd{\lambda}\to[n]$ such that $\tab(u)<\tab(\e u)$ and $\tab(u)<\tab(\s u)$ whenever the respective cells lie in the shifted Young diagram $\shyd{\lambda}$.
Let $\T{n}$ denote the set of shifted SYT of shape $\shyd{\Delta_n}$.
For example, \cref{shifted_example} shows a shifted standard Young diagram of shape $\shyd{\Delta_5}$.

\smallskip
The number of SYT is given be the following nice product formula.

\begin{thm}[Hook-length formula \cite{FRT1954,T52}]\label[thm]{hlf}

Let $\lambda$ be a partition.
Then the number of SYT of shape $\lambda$ is given by 
\begin{equation*}
f_{\lambda}=\frac{\abs{\lambda}!}{\prod_{u\in\yd{\lambda}}h_{\lambda}(u)}.
\end{equation*}
If $\lambda$ is strict then the number of shifted SYT of shape $\shyd{\lambda}$ is given by
\begin{equation*}
\shf{\lambda}=\frac{\abs{\lambda}!}{\prod_{u\in\shyd{\lambda}}\shh{\lambda}(u)}.
\end{equation*}
\end{thm}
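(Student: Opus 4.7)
The plan is to prove both formulas by induction on $|\lambda|$, using the probabilistic hook walk argument of Greene, Nijenhuis and Wilf (and its shifted analogue due to Sagan). The key observation is that the largest entry of any SYT of shape $\yd{\lambda}$ must lie in a corner cell (a cell $c$ with neither $\e c$ nor $\s c$ in $\yd\lambda$); deleting this cell yields an SYT of shape $\yd{\lambda} \setminus \{c\}$. Hence
\[
f_\lambda = \sum_{c} f_{\lambda \setminus c},
\]
summed over corners. By the inductive hypothesis, proving the hook-length formula reduces to the identity
\[
\sum_{c} \frac{1}{|\lambda|} \prod_{u \neq c} \frac{h_\lambda(u)}{h_{\lambda \setminus c}(u)} = 1.
\]

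The main step is to exhibit the left-hand side as a probability distribution. Following Greene--Nijenhuis--Wilf, I would define a random walk on $\yd\lambda$: choose a starting cell $u_0$ uniformly, and at each step move from $u_k$ to a uniformly chosen cell in the hook of $u_k$ other than $u_k$ itself. The walk terminates at a corner. By a careful induction on the walk length, tracking how the factor $h_\lambda(u)/h_{\lambda\setminus c}(u)$ (which equals $1$ unless $u$ shares a row or column with $c$) accumulates along the trajectory, one shows that the probability of ending at corner $c$ is exactly the $c$-summand above. Since the walk terminates almost surely, the probabilities sum to $1$.

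For the shifted formula, the combinatorial recurrence $\shf{\lambda} = \sum_c \shf{\lambda \setminus c}$ is identical, where $c$ ranges over corners of the shifted diagram. The probability-theoretic identity is the same in structure but requires modifying the hook walk to respect the extra contribution to $\shh\lambda(u)$ coming from the $(i+j)$-th row of $\shyd\lambda$. Sagan's modified hook walk accomplishes this: when the walk sits in cell $(i,j)$, the allowed next cells include not only the arm and the leg but also the cells in row $i+j$ that are part of the shifted hook. One then verifies by the same type of inductive analysis that the termination probability at each corner matches the expected ratio of products of shifted hook lengths.

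The principal obstacle is the inductive computation of the hook-walk termination probability: it is not obvious a priori that the product $\prod_{u} h_\lambda(u)/h_{\lambda \setminus c}(u)$ telescopes along arbitrary walk trajectories ending at $c$. The trick is to condition on the row and column of each step, observing that each factor in the product corresponds to a choice in the walk, so that the probability factors as a product of row and column contributions. In the shifted case the additional ``staircase'' step at position $(i+j)$ requires extra bookkeeping, as the contribution of cells straddling the diagonal must be treated separately; this is where the original Thrall proof and its later refinements by Sagan and by Fischer invested most of the technical work.
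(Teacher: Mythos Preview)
The paper does not prove this theorem at all; it is stated as a classical result with citations to \cite{FRT1954,T52}, and the sentence immediately following it reads: ``The literature offers a variety of proofs of different flavours for \cref{hlf}, for example using hook-walks~\cite{GNW,Sagan:selecting_random_shifted} or by means of jeu de taquin~\cite{NPS,Fischer2002}.'' So there is no ``paper's own proof'' to compare against.

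Your proposal is precisely the hook-walk approach the paper cites as one of the standard proofs. The outline you give is correct in its broad strokes: the recursion $f_\lambda=\sum_c f_{\lambda\setminus c}$ over corners, the reduction to the probabilistic identity, and the interpretation of each summand as the probability that the Greene--Nijenhuis--Wilf walk terminates at corner $c$. The shifted case via Sagan's modified walk is likewise the right pointer. That said, what you have written is an outline rather than a proof: the crux --- showing that the termination probability at $c$ equals $\frac{1}{|\lambda|}\prod_{u\neq c} h_\lambda(u)/h_{\lambda\setminus c}(u)$ --- is asserted (``by a careful induction on the walk length'') but not carried out, and you explicitly flag this as the principal obstacle. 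If you were actually asked to supply a proof, you would need to execute the row/column factorisation argument, and in the shifted case handle the diagonal cells where the hook ``bounces'' into row $i+j$; those details are nontrivial and are exactly what the cited papers \cite{GNW,Sagan:selecting_random_shifted} do.
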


The literature offers a variety of proofs of different flavours for \cref{hlf}, for example using hook-walks~\cite{GNW,Sagan:selecting_random_shifted} or by means of jeu de taquin~\cite{NPS,Fischer2002}.

\subsection{Random 132-avoiding sorting networks}

The reader is referred to \cite{K11} for background on pattern avoidance in permutations.

For $i\in[n-1]$ let $s_i=(i,i+1)$ denote the $i$:th \emph{adjacent transposition}.
The \emph{reverse permutation} $\wo\in\S_n$ is defined by $\wo(i)=n-i+1$ for $i \in [n]$.
A \emph{reduced word} of $\wo$ is a word $w=w_1\cdots w_N$ in the alphabet $[n-1]$ such that $w_0=s_{w_1}\cdots s_{w_N}$.

Note that we perform the compositions of transpositions $s_{w_i}$ corresponding to a word $w = w_1 \dots w_m$ from the left. As an example, consider $\S_4$ and the reduced word $1231$. 
Composing  $s_1 s_2 s_3 s_1$ from the left yields the permutation $(3, 2, 4, 1)$. In terms of permutation matrices, we have, for example, 
	\[s_1 = \begin{blockarray}{cccc}
	\mathbf{2} & \mathbf{1} & \mathbf{3} & \mathbf{4} \\
	\begin{block}{(cccc)}
	0 & 1 & 0 & 0 \\
	1 & 0 & 0 & 0 \\
	0 & 0 & 1 & 0 \\
	0 & 0 & 0 & 1 \\
	\end{block}
	\end{blockarray}\quad  s_1 s_2 = \begin{blockarray}{cccc}
	\mathbf{2} & \mathbf{3} & \mathbf{1} & \mathbf{4} \\
	\begin{block}{(cccc)}
	0 & 0 & 1 & 0 \\
	1 & 0 & 0 & 0 \\
	0 & 1 & 0 & 0 \\
	0 & 0 & 0 & 1 \\
	\end{block}
	\end{blockarray}\quad  s_1 s_2 s_3 s_1 = \begin{blockarray}{cccc}
	\mathbf{3} & \mathbf{2} & \mathbf{4} & \mathbf{1} \\ 
	\begin{block}{(cccc)}
	0 & 0 & 0 & 1 \\
	0 & 1 & 0 & 0 \\
	1 & 0 & 0 & 0 \\
	0 & 0 & 1 & 0 \\
	\end{block}
	\end{blockarray}\] where we can see that $s_i$ corresponds to swapping the columns $i$ and $i+1$.

Angel, Holroyd, Romik and Vir\'ag introduced \emph{$n$-element random sorting networks} in~\cite{AHRV07} as the set of reduced words of the reverse permutation $\wo\in\S_n$ equipped with the uniform probability measure.
In the same paper, Angel et al.~pose several striking conjectures about random sorting networks.

Suppose $w = w_1 \dots w_N$ is a sorting network.
Then $w_1 \dots w_k$ defines the \emph{intermediate permutation} $\sigma_k=s_{w_1} \cdots s_{w_k}\in\S_n$ for all $k\in[N]$.
One of the consequences of \cite[Conj.~2]{AHRV07} is that asymptotically the permutation matrices corresponding to the intermediate configurations coming from random sorting networks are supported on a family of ellipses.
In other words, their 1s occur inside an elliptic region of the matrix. In particular, at half-time the permutation matrix is supported on a disc. \cref{matrices} provides an illustration.
\begin{figure}[!htbp]
	\centering
	\includegraphics[width=0.32\textwidth]{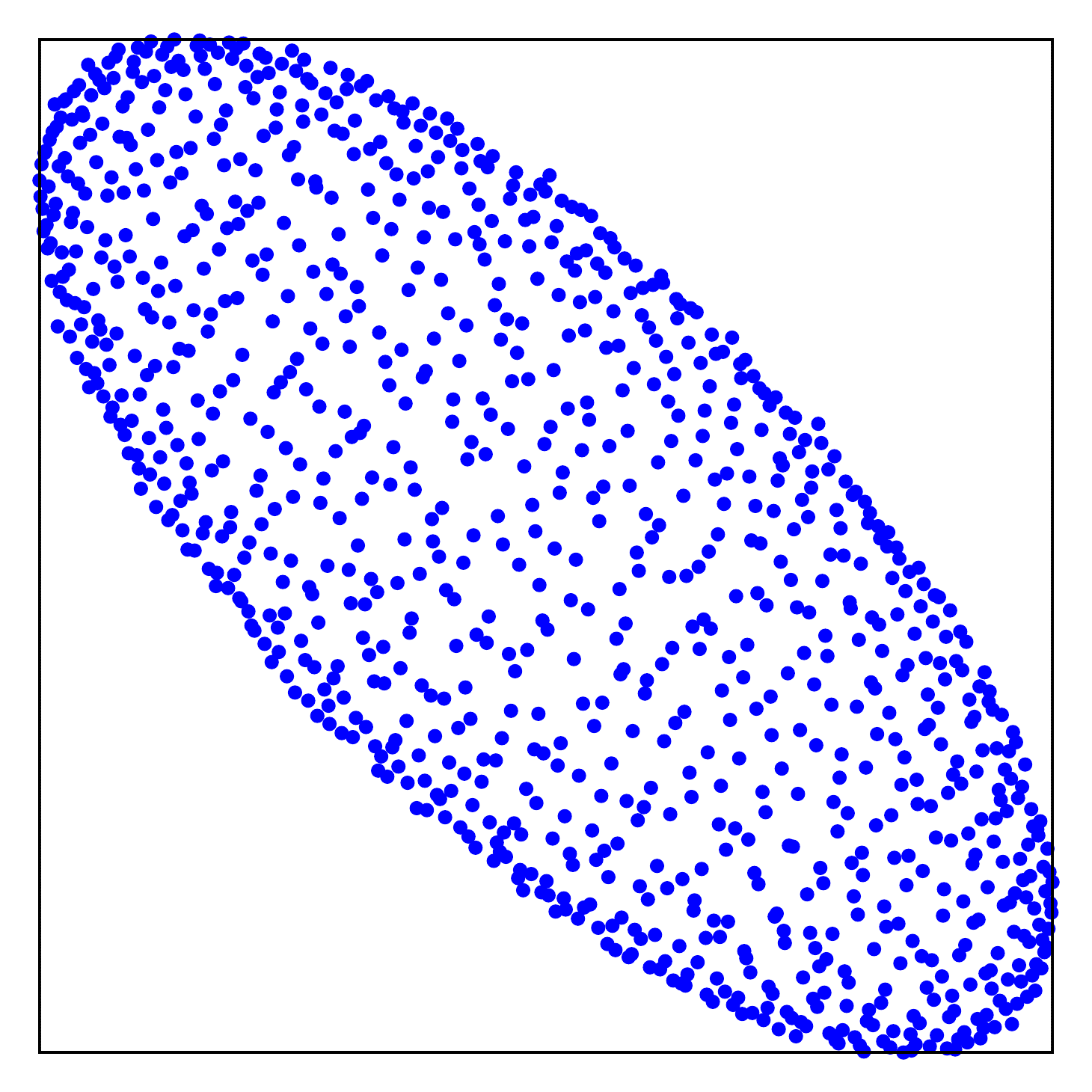}
	\includegraphics[width=0.32\textwidth]{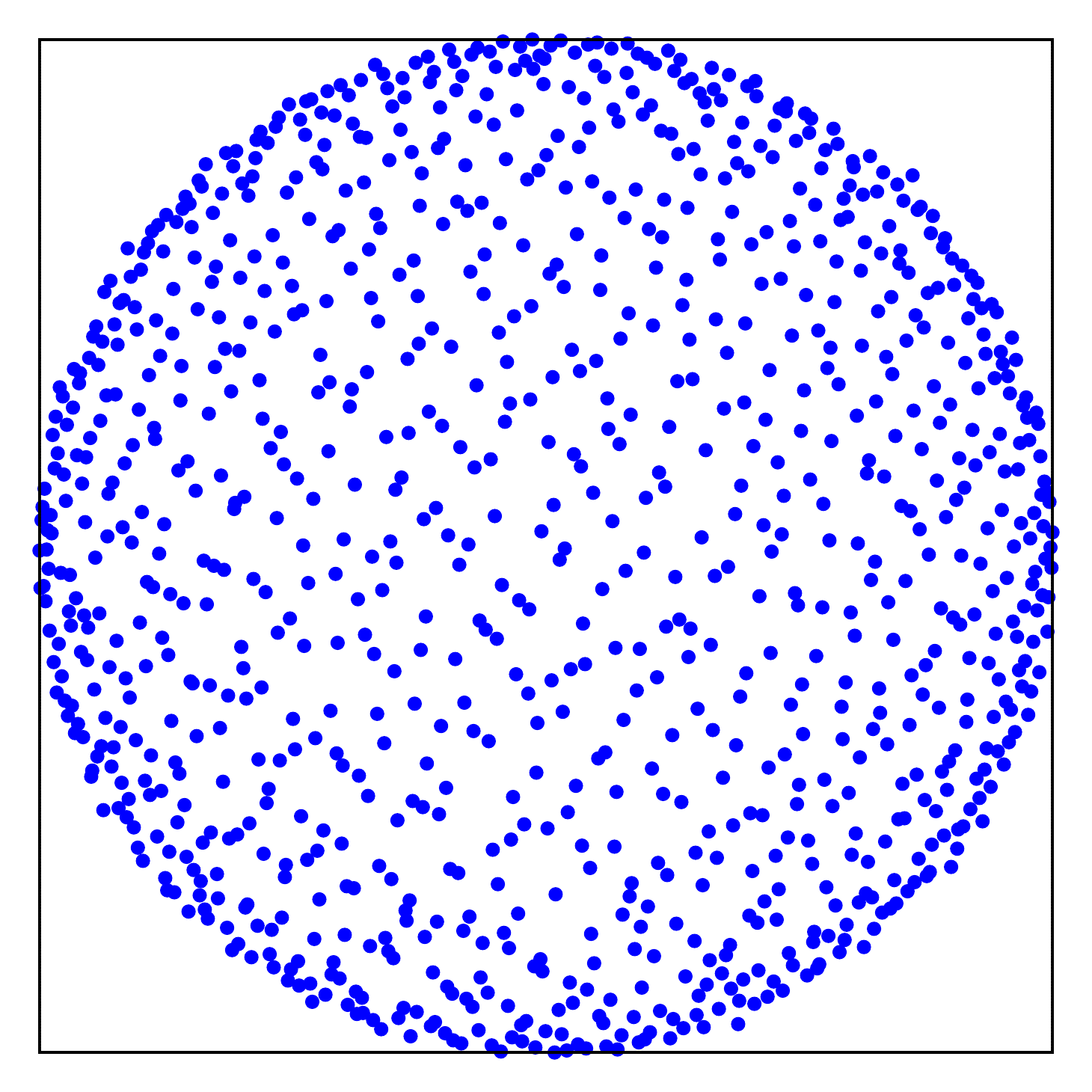}
	\includegraphics[width=0.32\textwidth]{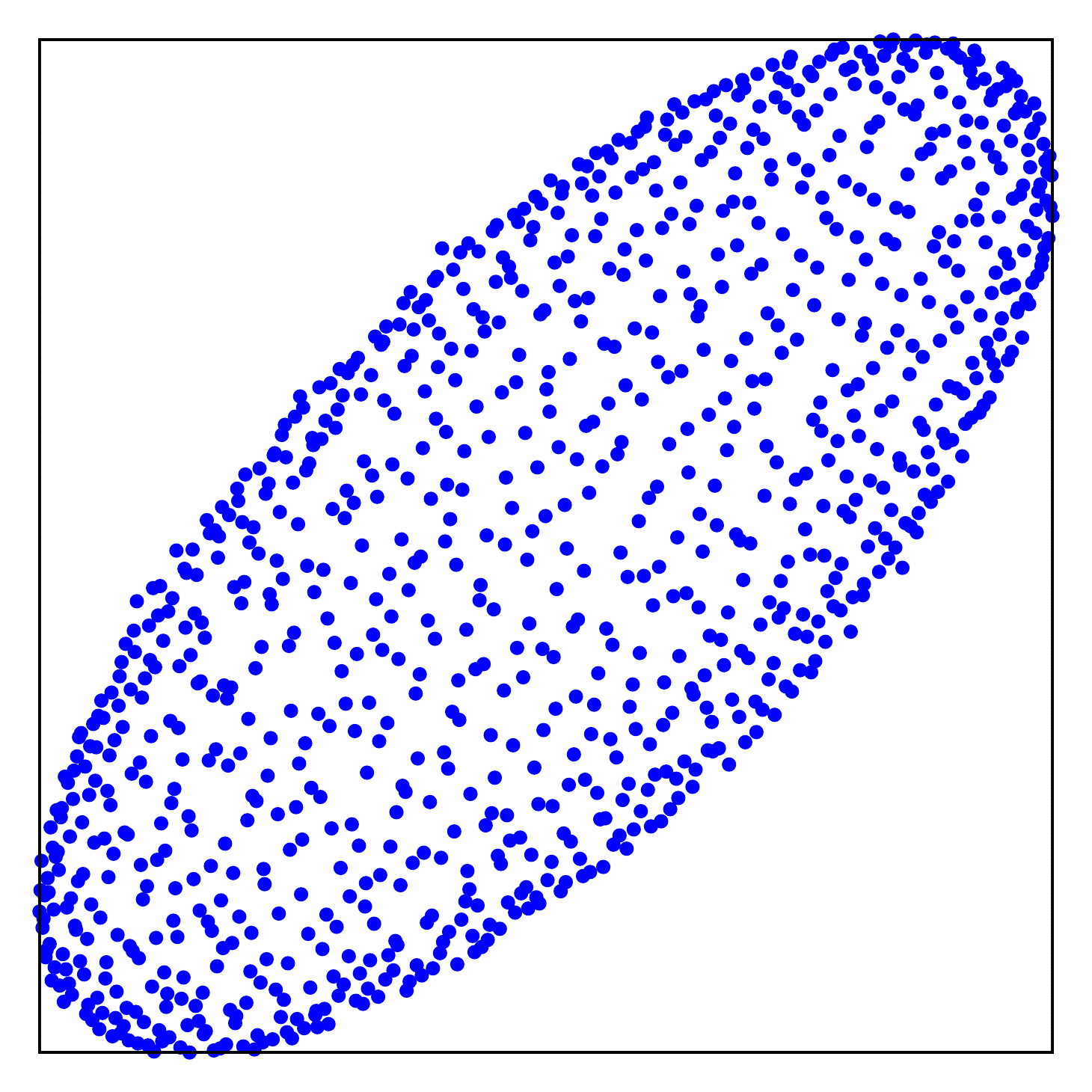}
	\caption{The intermediate permutation matrices of $\sigma_{\lfloor \alpha N \rfloor}$ of a 1000-element random sorting network at times $\alpha = \frac{1}{4}, \frac{1}{2}$ and $\frac{3}{4}$.}
	\label[fig]{matrices}
\end{figure}

For $0\le \alpha\le 1$, \cite[Conj.~1]{AHRV07} states that the scaled trajectories defined by
\begin{equation*}
f_{w, i}(\alpha)
=\frac{2\sigma_{\alpha N}^{-1}(i)}{n} - 1
\end{equation*}
for $\alpha N\in\Z$, and by linear interpolation otherwise, converge to random sine curves. See \cref{trajectories}. \begin{figure}[!htbp]
	\centering
	\includegraphics[width=0.6\textwidth]{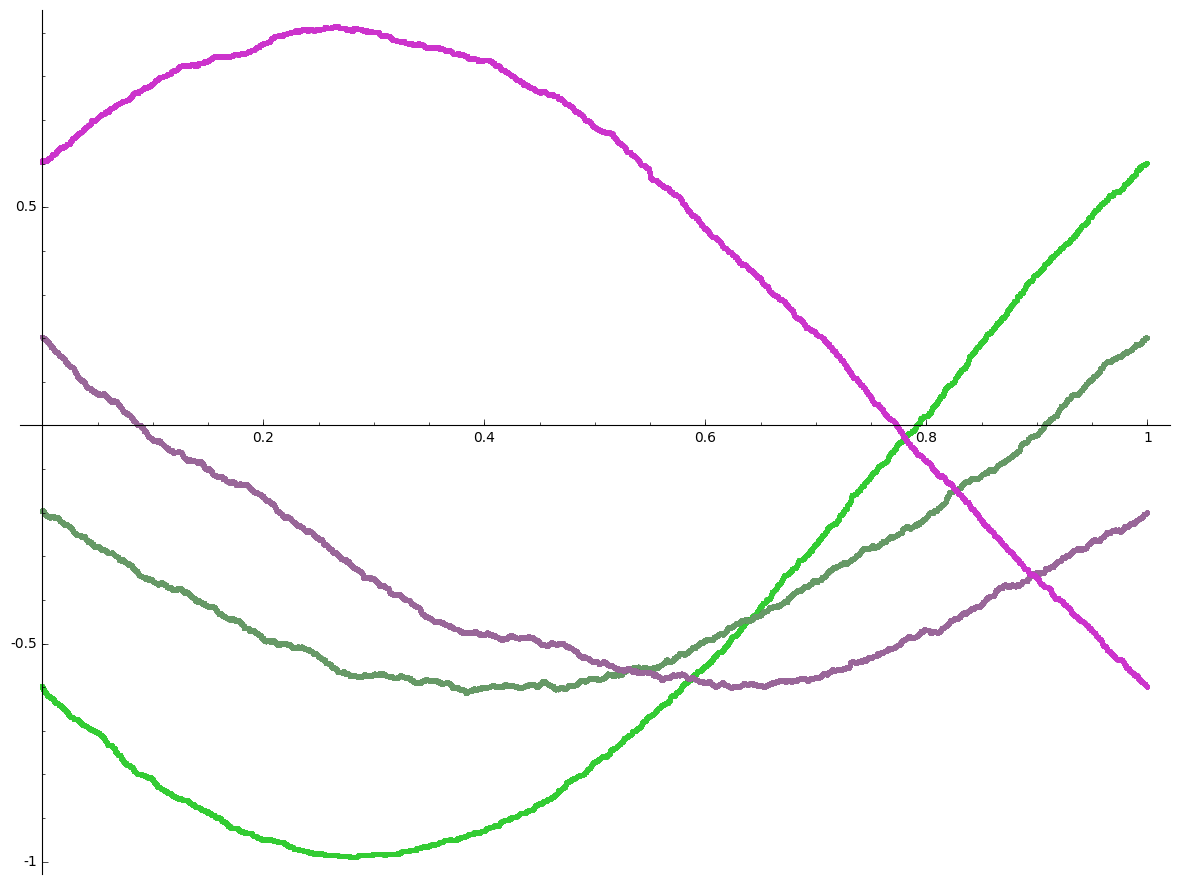}
	\caption{The scaled trajectories of the elements 200, 400, 600 and 800 in a 1000-element random sorting network.}
	\label[fig]{trajectories}
\end{figure}

The \emph{permutahedron} is an embedding of $\S_n$ into $\mathbb{R}^n$ defined by \[\sigma \mapsto (\sigma^{-1}(1), \dots, \sigma^{-1}(n)).\] Every permutation $\sigma \in \S_n$ lies on the sphere
\begin{equation*}
\mathbb{S}^{n-2}
=\Big\{z \in \R^n : \sum_{i = 1}^n z_i = \frac{n(n+1)}{2}
\text{ and }
\sum_{i = 1}^n z_i^2 = \frac{n(n+1)(2n+1)}{6}\Big\}
\,.
\end{equation*}
Random sorting networks correspond to paths on the permutahedron.
The strongest conjecture, \cite[Conj.~3]{AHRV07}, which implies both of the previous ones, states that these paths are close to great circles.
A proof of these conjectures was recently announced by Dauvergne in~\cite{D18}.

\smallskip
This paper considers similar questions restricted to \emph{132-avoiding sorting networks}, that is, those reduced words $w_1 \dots w_N$ of the reverse permutation in $\S_n$ such that $s_{w_1} \cdots s_{w_k}$ is 132-avoiding for all $k\in[N]$. With a  \emph{ random 132-avoiding sorting network}, we will refer to uniform distribution among all such networks of the same length.

The connection between 132-avoiding sorting networks and shifted SYT is the following. Let $w = w_1 \dots w_N$ be a 132-avoiding sorting network. Then, for $k \in [N]$, define a SYT $Q_{w_1 \dots w_k}$ by $Q_{w_1\dots w_k}(\cdot, j) = m$ if $w_m = j$ for all $m \in [k]$. Furthermore, define a shifted SYT $Q_{w_1 \dots w_k}^{\rightarrow}$ by shifting the rows of $Q_{w_1 \dots w_k}$. \cref{eg} shows an example.

\begin{thm}[{\cite[Thm.~3.3 and Thm.~4.6]{FN14}}]
For all $n \in \N$, the map $w \mapsto Q_w^{\rightarrow}$ is a bijection from $n$-element 132-avoiding sorting networks to shifted SYT of shape $\Delta^{\sh}_n$. The map $w \mapsto Q_w$ agrees with the restriction of the Edelman--Greene correspondence to 132-avoiding sorting networks.
\end{thm}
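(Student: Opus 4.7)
The plan is to prove the two assertions together by pinning down the Edelman--Greene insertion tableau for 132-avoiding sorting networks. Edelman--Greene is already a bijection from reduced words of $\wo$ to pairs $(P_w, Q_w^{EG})$ of SYT of shape $\Delta_n$; the second assertion will follow once one shows that for every 132-avoiding sorting network $w$ the insertion tableau $P_w$ equals the ``superstandard'' tableau $P_0$ of shape $\Delta_n$ whose $j$-th column reads $(j, j+1, \ldots, n-1)$ from top to bottom. Once $P_w = P_0$ is fixed across the class, the recording tableau tracks only the order in which the cells of $\Delta_n$ are added, and the column structure of $P_0$ forces a cell in column $j$ to be added at each step $m$ for which $w_m = j$---matching the definition of $Q_w$ in the statement.

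The main work is an induction on the prefix length $k$, simultaneously maintaining that $\sigma_k = s_{w_1} \cdots s_{w_k}$ is 132-avoiding and that $P_{w_1 \cdots w_k}$ is obtained from $P_0$ by restricting to some Young sub-shape $\mu_k \subseteq \Delta_n$ with $|\mu_k| = k$. The inductive step requires showing that EG row-insertion of $w_{k+1}$ into this partial $P_0$ adds a new cell precisely at the bottom of column $w_{k+1}$ and that the resulting tableau remains a restriction of $P_0$. I expect this to follow from a careful analysis of how EG row-insertion---including its special ``consecutive-letters'' rule---interacts with the structure of $P_0$'s columns (each of which is a consecutive run of integers) under the 132-avoidance hypothesis: 132-avoidance of $\sigma_{k+1}$ rules out the configurations that would cause EG insertion to place the new cell in a column other than $w_{k+1}$. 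This is the technical core of the argument.

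With $P_w = P_0$ established, injectivity of $w \mapsto Q_w$ on 132-avoiding networks is immediate from EG's bijectivity, and the fact that $Q_w$ is indeed an SYT of shape $\Delta_n$ follows from the induction. The shift $Q_w \mapsto Q_w^{\rightarrow}$ is a row-preserving map from shape $\Delta_n$ to shape $\shyd{\Delta_n}$, and its new columns are the antidiagonals of $Q_w$; these are forced to be increasing by the inductive construction, or equivalently by the interleaving constraints that 132-avoidance of each $\sigma_k$ places on the occurrences of neighbouring letters in $w$. Surjectivity onto $\T{n}$ is obtained by inverting the construction: given $T \in \T{n}$, read off $w$ letter by letter using the columns of the unshifted $T$, and verify inductively---using the column-strict and row-strict properties of the shifted SYT $T$---that $w$ is a reduced word of $\wo$ whose intermediate permutations are all 132-avoiding.

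The main obstacle is the EG analysis in the inductive step, where one must untangle how pattern-avoidance interacts with EG's modified insertion rule. An alternative, insertion-free route would directly encode the constraints that 132-avoidance places on the positions of each letter as a system of interleaving inequalities, and then read the shifted SYT conditions straight off from those---trading insertion-algorithm bookkeeping for combinatorial pattern-avoidance case-work.
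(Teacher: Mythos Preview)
The paper does not supply its own proof of this theorem: it is quoted verbatim as a result of Fishel and Nelson~\cite{FN14}, with further references to~\cite{LP18,STWW,DavisSagan} for alternative derivations, but no argument is given in the present paper. There is therefore nothing to compare your attempt against here.

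That said, a few remarks on your plan as it stands. Your outline is broadly on the right track, but one point is off: for \emph{any} reduced word of $\wo$, not just 132-avoiding ones, the Edelman--Greene insertion tableau is the fixed superstandard tableau of shape $\Delta_n$ (this is the content of Stanley's formula $\#\{\text{reduced words of }\wo\}=f_{\Delta_n}$ together with the EG bijection). So ``$P_w=P_0$'' is not where 132-avoidance enters. The genuine claim is the second assertion: that for 132-avoiding $w$, the EG \emph{recording} tableau agrees with the naive $Q_w$ defined by ``put $m$ in column $w_m$''---equivalently, that inserting $w_{k+1}$ into the partial insertion tableau adds a new cell precisely at the bottom of column $w_{k+1}$, with no horizontal displacement from bumping. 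This is the step that actually uses 132-avoidance, and you correctly flag it as the technical core without carrying it out. Your proposal is therefore a reasonable scaffold, but it is not yet a proof: the inductive bumping analysis (or, as you note, its pattern-avoidance reformulation via interleaving inequalities on letter positions) remains to be done, and that is where all of the content lies.
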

The same bijection was also described in~\cite{LP18}, \cite[Fig.~4]{STWW} in terms of heaps, and \cite[Prop.~5.2]{DavisSagan} in terms of descent sets.


\begin{figure}[b]
\centering
\begin{tikzpicture}
\draw[xshift=-5mm,yshift=15mm](0,0)node[anchor=east]{$w=1213423121$};
\draw[xshift=5mm,yshift=15mm,thickLine,->](0,0)--(1,0);
\begin{scope}[xshift=3cm]
\draw[thinLine](0,1)--(1,1)(0,2)--(2,2)(0,3)--(3,3)(1,1)--(1,4)(2,2)--(2,4)(3,3)--(3,4);
\draw[thickLine](0,0)--(1,0)--(1,1)--(2,1)--(2,2)--(3,2)--(3,3)--(4,3)--(4,4)--(0,4)--cycle;
\draw[entry](0,3)node{$1$}(1,3)node{$2$}(2,3)node{$4$}(3,3)node{$5$}(0,2)node{$3$}(1,2)node{$6$}(2,2)node{$7$}(0,1)node{$8$}(1,1)node{$9$}(0,0)node{$10$};
\end{scope}
\draw[xshift=8cm,yshift=15mm,thickLine,->](0,0)--(1,0);
\begin{scope}[xshift=10cm]
\draw[thinLine](3,1)--(4,1)(2,2)--(4,2)(1,3)--(4,3)(1,3)--(1,4)(2,2)--(2,4)(3,1)--(3,4);
\draw[thickLine](0,3)--(1,3)--(1,2)--(2,2)--(2,1)--(3,1)--(3,0)--(4,0)--(4,4)--(0,4)--cycle;
\draw[entry](0,3)node{$1$}(1,3)node{$2$}(2,3)node{$4$}(3,3)node{$5$}(1,2)node{$3$}(2,2)node{$6$}(3,2)node{$7$}(2,1)node{$8$}(3,1)node{$9$}(3,0)node{$10$};
\end{scope}
\end{tikzpicture}
\caption{The Edelman--Greene correspondence for a 5-element 132-avoiding sorting network.}
\label[fig]{eg}
\end{figure}
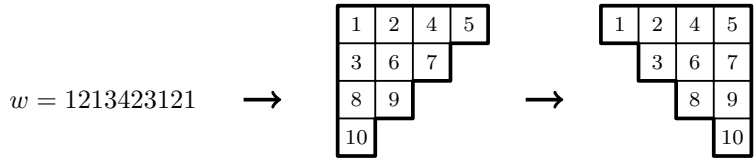

We conclude this section with two more facts about 132-avoiding sorting networks, see \cite{LP18} for proofs. First, reversing a sorting network preserves the property of being 132-avoiding.
\begin{prop}\label{symmetry}
	A reduced word $w_1 \dots w_N$ is a 132-avoiding sorting network if and only if $w_N \dots w_1$ is.
\end{prop}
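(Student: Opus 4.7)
The plan is to invoke the Fishel--Nelson bijection from the theorem above and reduce the claim to an involution on shifted SYT of staircase shape.

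First observe that $v = w_N \cdots w_1$ is automatically a reduced word of $\wo$, since $\wo$ is an involution: $s_{w_N} \cdots s_{w_1} = (s_{w_1} \cdots s_{w_N})^{-1} = \wo^{-1} = \wo$. The real content of the proposition is therefore the 132-avoidance of each intermediate permutation of $v$, which by the stated theorem is equivalent to $Q_v^{\rightarrow}$ being a shifted SYT of shape $\shyd{\Delta_n}$.

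Second, I would express the recording tableau $Q_v$ of the reversed word explicitly in terms of $Q_w$. Column $j$ of the unshifted tableau $Q_w$ is the increasing list of positions of the letter $j$ in $w$, and has exactly $n-j$ entries. Reversing $w$ replaces each position $p$ by $N+1-p$ and reverses the order within the column, yielding
\[
Q_v(i,j) = N+1-Q_w(n-j-i+1,j),
\]
a column-wise reversal-and-complement of $Q_w$.

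Third, verify that $Q_v^{\rightarrow}$ is a shifted SYT of shape $\shyd{\Delta_n}$. Unpacking the shifted-SYT condition on $Q_w^{\rightarrow}$ in terms of the unshifted tableau $Q_w$ produces two inequalities: the rows of $Q_w$ are increasing (this gives row-increase of the shifted tableau), and $Q_w(r,c) > Q_w(r-1,c+1)$ whenever both cells lie in $\yd{\Delta_n}$ (this gives column-increase of the shifted tableau). Substituting the explicit formula for $Q_v$ shows that these two inequalities \emph{swap} under reversal: the row-increase of $Q_v$ is the antidiagonal inequality on $Q_w$, and the antidiagonal inequality on $Q_v$ is the row-increase of $Q_w$. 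Both inequalities hold for $Q_w$ by hypothesis, hence both hold for $Q_v$; the columns of $Q_v$ are increasing automatically because $Q_w$ has this property by construction. Applying the Fishel--Nelson bijection once more concludes that $v$ is a 132-avoiding sorting network. The reverse implication follows from the same argument with the roles of $w$ and $v$ interchanged.

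The main obstacle is spotting the symmetry between the two pieces of the shifted-SYT condition under the column-wise reversal-and-complement operation on $Q_w$; once this symmetry is observed, the remaining verification is a short substitution.
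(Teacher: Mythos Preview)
Your proof is correct and is essentially the same as the paper's, which reads in full: ``This follows from flipping $Q_w^{\rightarrow}$ about the anti-diagonal and replacing each label $k$ by $N + 1 - k$.'' Your column-wise reversal-and-complement formula $Q_v(i,j) = N+1-Q_w(n-j-i+1,j)$ is exactly the anti-diagonal flip with label complementation, expressed in the unshifted coordinates of $Q_w$ rather than the shifted coordinates of $Q_w^{\rightarrow}$; you have simply supplied the verification that the paper leaves to the reader.
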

\begin{proof}
This follows from flipping $Q_w^{\rightarrow}$ about the anti-diagonal and replacing each label $k$ by $N + 1 - k$.
\end{proof}
Second, the set of 132-avoiding and 312-avoiding sorting networks coincide.
\begin{prop}[{\cite[Prop.~3.10]{LP18}}]
	A reduced word $w$ is a 132-avoiding sorting network if and only if $w$ is a 312-avoiding sorting network.
\end{prop}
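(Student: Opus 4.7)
The plan is to prove the proposition by combining \cref{symmetry} with the observation that complementation exchanges $132$-avoidance with $312$-avoidance for permutations.

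I would begin by recording the following elementary fact. Left multiplication by $w_0$ sends a permutation $\sigma\in\S_n$ to its complement $i\mapsto n+1-\sigma(i)$, and this reverses every value-comparison. Hence a triple of positions $i<j<k$ forms a 132-pattern in $\sigma$ (that is, $\sigma(i)<\sigma(k)<\sigma(j)$) if and only if the same triple forms a 312-pattern in $w_0\sigma$ (that is, $(w_0\sigma)(j)<(w_0\sigma)(k)<(w_0\sigma)(i)$). Thus $\sigma\mapsto w_0\sigma$ is a bijection from 132-avoiding permutations to 312-avoiding permutations.

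Next, write $\sigma_k=s_{w_1}\cdots s_{w_k}$ for the intermediate permutations of a reduced word $w=w_1\cdots w_N$ of $\wo$, and $\sigma_k^R$ for those of its reverse $w^R=w_N\cdots w_1$. Since $\sigma_N=\wo$ and $\wo=\wo^{-1}$, a direct cancellation gives
\[
s_{w_{N-k+1}}\cdots s_{w_N}=\sigma_{N-k}^{-1}\,\wo,
\]
and taking inverses yields
\[
\sigma_k^R=s_{w_N}\cdots s_{w_{N-k+1}}=\wo\,\sigma_{N-k}.
\]
In words, the intermediate permutations of $w^R$ are the complements of those of $w$, listed in the opposite order.

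Combining these two facts, $w$ is a 132-avoiding sorting network iff each $\sigma_{N-k}$ avoids 132 iff each $\wo\sigma_{N-k}=\sigma_k^R$ avoids 312 iff $w^R$ is a 312-avoiding sorting network. Applying this equivalence to $w^R$ in place of $w$ also gives $w^R$ is 132-avoiding iff $w$ is 312-avoiding. Combining with \cref{symmetry}, which asserts $w$ is 132-avoiding iff $w^R$ is, we obtain $w$ is 132-avoiding iff $w^R$ is 132-avoiding iff $w$ is 312-avoiding, as desired. The argument is essentially formal; the only place to be careful is the index bookkeeping in the identity $\sigma_k^R=\wo\sigma_{N-k}$.
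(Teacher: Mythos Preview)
The paper does not actually prove this proposition; it merely cites \cite[Prop.~3.10]{LP18}. So there is no in-paper argument to compare against.

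Your argument is correct. The identity $\sigma_k^R=\wo\,\sigma_{N-k}$ is exactly right under the paper's composition convention (standard function composition, as confirmed by the example $s_1s_2=(2,3,1,4)$): writing $\wo=\sigma_{N-k}\cdot(s_{w_{N-k+1}}\cdots s_{w_N})$ and inverting the second factor gives $\sigma_k^R=(s_{w_{N-k+1}}\cdots s_{w_N})^{-1}=\wo^{-1}\sigma_{N-k}=\wo\sigma_{N-k}$. Complementation $\sigma\mapsto\wo\sigma$ indeed swaps the patterns $132$ and $312$, so $w$ is $132$-avoiding iff $w^R$ is $312$-avoiding. Invoking \cref{symmetry} then closes the loop. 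The endpoints cause no trouble since $\sigma_0=\mathrm{id}$ and $\sigma_N=\wo$ avoid both patterns. One small remark: your appeal to \cref{symmetry} is legitimate here because that proposition is proved in the paper via the tableau bijection, not via the present statement, so there is no circularity.
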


\section{The limit shape}\label{S:shape}
In this section we derive a limit shape for random shifted SYT of staircase shape.
We may interpret a shifted SYT $\tab\in\T{n}$ as the graph of a function
\begin{equation*}
L_{\tab}:\big\{(x,y)\in\R^2:0<x<y<1\big\}\to\R_{\geq0}
\end{equation*}
by viewing the entries as heights
\begin{equation*}
L_{\tab}(x,y)=\frac{1}{N}\tab\big(\ceil{(n-1)x},\ceil{(n-1)y}).
\end{equation*}
Our main result, \cref{limit_shape}, states that by this choice of scaling, the functions $L_{\tab^{(n)}}$ converge with probability 1 to the surface depicted in \cref{shape}, where each $\tab^{(n)}\in\T{n}$ is chosen uniformly at random.
The proof of \cref{limit_shape} relies heavily on the work of Pittel and Romik~\cite{PR07}.

\smallskip
Recall our conventions from \cref{S:tableaux}. Given a strict partition $\lambda$ define a partition $\Lambda$ by letting its Young diagram equal
\begin{equation*}
\yd{\Lambda}
=\big\{(i,j+1):(i,j)\in\shyd{\lambda}\big\}
\cup\big\{(j,i):(i,j)\in\shyd{\lambda}\big\}.
\end{equation*}
It is easy to see that this really is the Young diagram of a partition.
See \cref{shiftsymm}.
We call $\Lambda$ the \emph{shift-symmetric} partition corresponding to $\lambda$.
The motivation for this definition is the fact that shifted hook-lengths of the cells in $\shyd{\lambda}$ correspond to hook-lengths of cells in $\yd{\Lambda}$. 

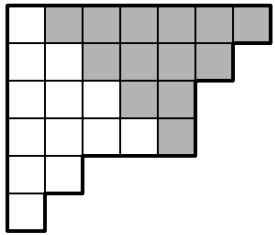
\begin{figure}[t]
\centering
\begin{tikzpicture}
\begin{scope}
\fill[fillGrey,xshift=1cm,yshift=2cm](0,3)--(1,3)--(1,2)--(2,2)--(2,1)--(3,1)--(3,0)--(4,0)--(4,2)--(5,2)--(5,3)--(6,3)--(6,4)--(0,4)--cycle;
\draw[thinLine](0,1)--(1,1)(0,2)--(2,2)(0,3)--(5,3)(0,4)--(5,4)(0,5)--(6,5)(1,1)--(1,6)(2,2)--(2,6)(3,2)--(3,6)(4,2)--(4,6)(5,4)--(5,6)(6,5)--(6,6);
\draw[thickLine](0,0)--(1,0)--(1,1)--(2,1)--(2,2)--(5,2)--(5,4)--(6,4)--(6,5)--(7,5)--(7,6)--(0,6)--cycle;
\end{scope}
\end{tikzpicture}
\caption{The shift-symmetric partition $\Lambda=(7,6,5,5,2,1)$ of the strict partition $\lambda=(6,4,2,1)$.}
\label[fig]{shiftsymm}
\end{figure}

\begin{prop}\label[prop]{fL}
Let $\lambda$ be a strict partition and $\Lambda$ its shift-symmetric partition.
Then
\begin{equation}\label{eq:fL}
f_{\Lambda}=(\shf{\lambda})^2\cdot\binom{2\abs{\lambda}}{\abs{\lambda}}\cdot2^{-\ell(\lambda)}.
\end{equation}
\end{prop}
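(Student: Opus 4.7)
The plan is to apply the hook-length formula (\cref{hlf}) to both $f_\Lambda$ and $\shf{\lambda}$ and reduce \eqref{eq:fL} to the product identity
\[
\prod_{u \in \yd{\Lambda}} h_{\Lambda}(u) \;=\; 2^{\ell(\lambda)} \left( \prod_{u \in \shyd{\lambda}} \shh{\lambda}(u) \right)^{\!2},
\]
using that $\abs{\Lambda} = 2\abs{\lambda}$. Straight from the definition of $\Lambda$, the two injections $\sigma\colon (i,c) \mapsto (i,c+1)$ and $\tau\colon (i,c) \mapsto (c,i)$ from $\shyd{\lambda}$ to $\yd{\Lambda}$ have disjoint images (one strictly above, the other weakly below the diagonal) that partition $\yd{\Lambda}$. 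I would use this to rewrite the left-hand product as
\[
\prod_{u \in \yd{\Lambda}} h_{\Lambda}(u) \;=\; \prod_{(i,c) \in \shyd{\lambda}} h_{\Lambda}(i,c+1)\cdot h_{\Lambda}(c,i),
\]
and then evaluate each of the two factors in terms of $\shh{\lambda}(i,c)$.

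For this, let $k_{c} = \abs{\{r : (r,c) \in \shyd{\lambda}\}}$ be the shifted column-height. Unwinding the construction yields the clean pair of formulas
\[
\Lambda_{i} = k_{i} + \lambda_{i}, \qquad \Lambda'_{c} = k_{c-1} + \lambda_{c},
\]
valid for all $i,c \ge 1$ (with $k_{0}=0$ and $\lambda_{r}=0$ for $r>\ell(\lambda)$). Strictness of $\lambda$ implies $k_{i} = i$ whenever $i \le \ell(\lambda)$, because $\lambda_{i} \ge \ell(\lambda) - i + 1$. Substituting into $h_{\Lambda}(i,c+1) = \Lambda_{i} + \Lambda'_{c+1} - i - c$ and comparing with the definition of $\shh{\lambda}(i,c)$, a short calculation gives $h_{\Lambda}(i,c+1) = \shh{\lambda}(i,c)$ for every $(i,c) \in \shyd{\lambda}$. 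The same computation applied to the reflected cell splits into two cases: when $c > \ell(\lambda)$ one still obtains $h_{\Lambda}(c,i) = \shh{\lambda}(i,c)$, whereas when $c \le \ell(\lambda)$ one finds the asymmetric pair $h_{\Lambda}(c,i) = \lambda_{i} + \lambda_{c}$ versus $\shh{\lambda}(i,c) = \lambda_{i} + \lambda_{c+1}$.

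After these substitutions every factor cancels except those coming from the case $c \le \ell(\lambda)$, and the identity reduces to
\[
\prod_{i=1}^{\ell(\lambda)} \prod_{c=i}^{\ell(\lambda)} \frac{\lambda_{i} + \lambda_{c}}{\lambda_{i} + \lambda_{c+1}} \;=\; 2^{\ell(\lambda)}.
\]
The range $c = i, \ldots, \ell(\lambda)$ is valid because, by strictness, row $i$ of $\shyd{\lambda}$ always extends at least to column $\ell(\lambda)$. For fixed $i$ the inner product telescopes to $(\lambda_{i} + \lambda_{i}) / (\lambda_{i} + \lambda_{\ell(\lambda)+1}) = 2$, using $\lambda_{\ell(\lambda)+1} = 0$. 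Taking the product over $i$ gives the desired $2^{\ell(\lambda)}$. The only non-routine step, and the one where strictness of $\lambda$ enters essentially, is verifying the two closed-form expressions $\Lambda_{i} = k_{i} + \lambda_{i}$ and $\Lambda'_{c} = k_{c-1} + \lambda_{c}$; once these are in hand, the remaining hook computation is mechanical, and the factor $2^{\ell(\lambda)}$ appears as an automatic row-by-row telescoping.
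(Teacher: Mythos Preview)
Your argument is correct. You reduce to the same hook-product identity \eqref{eq:hL} as the paper, but you prove it by a different method: the paper argues by induction on the shifted diagram (checking the base case where $\lambda$ is a staircase and $\Lambda$ a rectangle, then showing the identity is preserved when a single cell is added), whereas you give a direct computation. You split $\yd{\Lambda}$ into the images of $\sigma$ and $\tau$, derive the closed forms $\Lambda_i=k_i+\lambda_i$ and $\Lambda'_c=k_{c-1}+\lambda_c$, and compare each hook $h_{\Lambda}(i,c+1)$ and $h_{\Lambda}(c,i)$ to $\shh{\lambda}(i,c)$ explicitly. The payoff is that the factor $2^{\ell(\lambda)}$ is isolated transparently: it appears as the row-by-row telescoping of $\prod_{c=i}^{\ell(\lambda)}(\lambda_i+\lambda_c)/(\lambda_i+\lambda_{c+1})=2$, which the inductive proof in the paper does not make visible. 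Conversely, the paper's induction avoids the case split on $c\le\ell(\lambda)$ versus $c>\ell(\lambda)$ and the bookkeeping with $k_c$. Both approaches use strictness of $\lambda$ in an essential way (you need it for $k_i=i$ when $i\le\ell(\lambda)$ and for the range $c=i,\dots,\ell(\lambda)$ to lie inside row $i$), and both finish by a single appeal to the hook-length formula.
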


\begin{proof}
We show that
\begin{equation}\label{eq:hL}
\prod_{u\in\yd{\Lambda}}h_\Lambda(u)
=2^{\ell(\lambda)}\Big(\prod_{u\in\shyd{\lambda}}\shh{\lambda}(u)\Big)^2.
\end{equation}
This is obvious when $\lambda$ is a staircase and $\Lambda$ is a rectangle.
In this case
\begin{equation*}
h_{\Lambda}(i,i)=2\,h_{\Lambda}(i,\ell(\lambda)+1)=2\,\shh{\lambda}(i,\ell(\lambda))
\end{equation*}
as in \cref{diag_hook_lengths} and
\begin{equation*}
h_{\Lambda}(i,j)=h_{\Lambda}(j,i)=\shh{\lambda}(i,j-1)
\end{equation*}
when $i<j\leq\ell(\lambda)$ as in \cref{other_hook_lengths}.
The identity in \eqref{eq:hL} follows inductively as it is easy to verify that it is preserved when a cell $(i,j)$ with $i<j$ is added to $\shyd{\lambda}$.
See \cref{add_cell}.

The claim then follows from the hook-length formula (\cref{hlf}).
\end{proof}

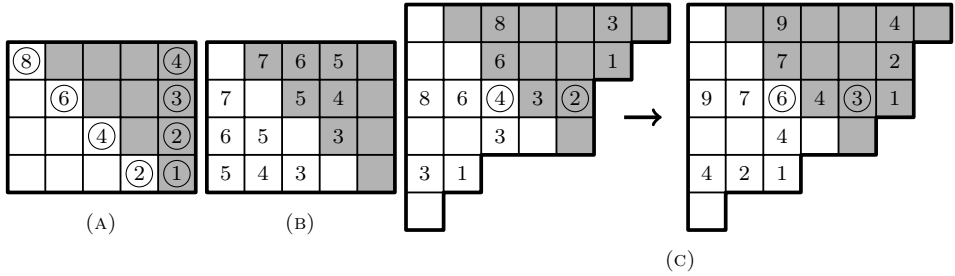
\begin{figure}[t]
\centering
\begin{subfigure}{.2\textwidth}
\begin{tikzpicture}
\begin{scope}
\fill[fillGrey,xshift=1cm](0,3)--(1,3)--(1,2)--(2,2)--(2,1)--(3,1)--(3,0)--(4,0)--(4,4)--(0,4)--cycle;
\draw[thinLine](0,1)--(5,1)(0,2)--(5,2)(0,3)--(5,3)(1,0)--(1,4)(2,0)--(2,4)(3,0)--(3,4)(4,0)--(4,4);
\draw[thickLine](0,0)--(5,0)--(5,4)--(0,4)--cycle;
\draw[entry]
(0,3)node[circled]{$8$}(1,2)node[circled]{$6$}(2,1)node[circled]{$4$}(3,0)node[circled]{$2$}(4,3)node[circled]{$4$}(4,2)node[circled]{$3$}(4,1)node[circled]{$2$}(4,0)node[circled]{$1$};
\end{scope}
\end{tikzpicture}
\caption{}
\label[fig]{diag_hook_lengths}
\end{subfigure}
\begin{subfigure}{.2\textwidth}
\begin{tikzpicture}
\begin{scope}
\fill[fillGrey,xshift=1cm](0,3)--(1,3)--(1,2)--(2,2)--(2,1)--(3,1)--(3,0)--(4,0)--(4,4)--(0,4)--cycle;
\draw[thinLine](0,1)--(5,1)(0,2)--(5,2)(0,3)--(5,3)(1,0)--(1,4)(2,0)--(2,4)(3,0)--(3,4)(4,0)--(4,4);
\draw[thickLine](0,0)--(5,0)--(5,4)--(0,4)--cycle;
\draw[entry](1,3)node{$7$}(2,3)node{$6$}(3,3)node{$5$}(2,2)node{$5$}(3,2)node{$4$}(3,1)node{$3$}(0,2)node{$7$}(0,1)node{$6$}(1,1)node{$5$}(0,0)node{$5$}(1,0)node{$4$}(2,0)node{$3$};
\end{scope}
\end{tikzpicture}
\caption{}
\label[fig]{other_hook_lengths}
\end{subfigure}
\begin{subfigure}{.58\textwidth}
\begin{tikzpicture}
\begin{scope}
\fill[fillGrey,xshift=1cm,yshift=2cm](0,3)--(1,3)--(1,2)--(2,2)--(2,1)--(3,1)--(3,0)--(4,0)--(4,2)--(5,2)--(5,3)--(6,3)--(6,4)--(0,4)--cycle;
\draw[thinLine](0,1)--(1,1)(0,2)--(2,2)(0,3)--(5,3)(0,4)--(5,4)(0,5)--(6,5)(1,1)--(1,6)(2,2)--(2,6)(3,2)--(3,6)(4,2)--(4,6)(5,4)--(5,6)(6,5)--(6,6);
\draw[thickLine](0,0)--(1,0)--(1,1)--(2,1)--(2,2)--(5,2)--(5,4)--(6,4)--(6,5)--(7,5)--(7,6)--(0,6)--cycle;
\draw[entry](5,5)node{$3$}(5,4)node{$1$}(0,3)node{$8$}(1,3)node{$6$}(2,3)node[circled]{$4$}(3,3)node{$3$}(4,3)node[circled]{$2$}(2,5)node{$8$}(2,4)node{$6$}(2,2)node{$3$}(0,1)node{$3$}(1,1)node{$1$};
\end{scope}
\draw[->,thickLine,xshift=58mm](0,3)--(1,3);
\begin{scope}[xshift=75mm]
\fill[fillGrey,xshift=1cm,yshift=2cm](0,3)--(1,3)--(1,2)--(2,2)--(2,1)--(3,1)--(3,0)--(4,0)--(4,1)--(5,1)--(5,3)--(6,3)--(6,4)--(0,4)--cycle;
\draw[thinLine](0,1)--(1,1)(0,2)--(3,2)(0,3)--(5,3)(0,4)--(6,4)(0,5)--(6,5)(1,1)--(1,6)(2,1)--(2,6)(3,2)--(3,6)(4,2)--(4,6)(5,3)--(5,6)(6,5)--(6,6);
\draw[thickLine](0,0)--(1,0)--(1,1)--(3,1)--(3,2)--(5,2)--(5,3)--(6,3)--(6,5)--(7,5)--(7,6)--(0,6)--cycle;
\draw[entry](5,5)node{$4$}(5,4)node{$2$}(5,3)node{$1$}(0,3)node{$9$}(1,3)node{$7$}(2,3)node[circled]{$6$}(3,3)node{$4$}(4,3)node[circled]{$3$}(2,5)node{$9$}(2,4)node{$7$}(2,2)node{$4$}(2,1)node{$1$}(0,1)node{$4$}(1,1)node{$2$};
\end{scope}
\end{tikzpicture}
\caption{}
\label{add_cell}
\end{subfigure}
\caption{Matching the hook-lengths of a strict partition and its shift-symmetric partition.}
\label[fig]{proof_HL}
\end{figure}

\begin{prob} The fact that all quantities in \eqref{eq:fL} have natural combinatorial interpretations suggests that there might be a purely bijective proof of \cref{fL} that relies only on the manipulation of tableaux.
The authors are unaware of such a proof and it would be interesting to see one.
\end{prob}

The following two results provide an estimate for the probability that a fixed sub-diagram $\shyd{\lambda}$ of size $k$ of the shifted staircase contains precisely the entries $1,\dots,k$ in a shifted SYT of shape $\shyd{\Delta_n}$ chosen uniformly at random.


Let $m,n\in\N$, $\Box=(n^m)$, and $\lambda$ be a partition.
If $\yd{\lambda}\subseteq\yd{\Box}$ define a partition $\Box\setminus\lambda$ by setting
\begin{equation*}
\yd{(\Box\setminus\lambda)}
=\big\{(m-i+1,n-j+1):(i,j)\in\yd{\Box}\setminus\yd{\lambda}\big\}
\,.
\end{equation*}
Moreover, if $\lambda$ is strict and $\shyd{\lambda}\subseteq\shyd{\Delta_n}$, define a strict partition $\Delta_n\setminus\lambda$ by setting
\begin{equation*}
\shyd{(\Delta_n\setminus\lambda)}
=\big\{(n-j,n-i):(i,j)\in\shyd{\Delta_n}\setminus\shyd{\lambda}\big\}.
\end{equation*}

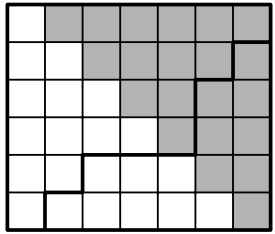
\begin{figure}[b]
\centering
\begin{tikzpicture}
\begin{scope}
\fill[fillGrey,xshift=1cm](0,5)--(1,5)--(1,4)--(2,4)--(2,3)--(3,3)--(3,2)--(4,2)--(4,1)--(5,1)--(5,0)--(6,0)--(6,6)--(0,6)--cycle;
\draw[thinLine](0,0)grid(7,6);
\draw[thickLine](0,0)rectangle(7,6)(1,0)--(1,1)--(2,1)--(2,2)--(5,2)--(5,4)--(6,4)--(6,5)--(7,5);
\end{scope}
\end{tikzpicture}
\caption{We have $\Box_7=(7^6)$, $\lambda=(6,4,2,1)$, $\Lambda=(7,6,5,5,2,1)$, and $\Box_7\setminus\Lambda=(6,5,2,2,1)$ is the shift-symmetric partition of $\Delta_7\setminus\lambda=(5,3)$.}
\label[fig]{shift_symm_rect}
\end{figure}

\begin{lem}\label[lem]{prob}
Let $n\in\N$, $\lambda$ be a strict partition of size $k$ with $\lambda_1<n$, and let $\P_n$ denote the uniform probability measure on $\T{n}$.
Then
\begin{equation}\label{eq:prob}
\P_n\big(\tab\in\T{n}:\tab(\shyd{\lambda})=[k]\big)
=\binom{N}{k}^{-1}\cdot\sqrt{\binom{2N}{2k}\frac{f_\Lambda\cdot f_{\Box_n\setminus \Lambda}}{f_{\Box_n}}}
\,,
\end{equation}
where $\Lambda$ and $\Box_n$ are the shift-symmetric partitions corresponding to $\lambda$ and $\Delta_n$ respectively.
\end{lem}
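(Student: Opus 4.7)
The plan is to rewrite both sides in terms of numbers of tableaux via the hook-length formula, interpret the probability combinatorially, and then reduce the desired equality to an elementary binomial identity.

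First I would observe that a shifted SYT $\tab\in\T{n}$ satisfying $\tab(\shyd{\lambda})=[k]$ decomposes canonically into a pair: a shifted SYT of shape $\shyd{\lambda}$ (recording the positions of $1,\dots,k$) and a shifted SYT of shape $\shyd{\Delta_n\setminus\lambda}$ (recording the positions of $k+1,\dots,N$, after subtracting $k$). Since the total number of tableaux is $\shf{\Delta_n}$, this gives
\begin{equation*}
\P_n\big(\tab\in\T{n}:\tab(\shyd{\lambda})=[k]\big)
=\frac{\shf{\lambda}\,\shf{\Delta_n\setminus\lambda}}{\shf{\Delta_n}}.
\end{equation*}

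Next I would record two geometric facts that tie the complement into the shift-symmetric picture: (a) $\Box_n\setminus\Lambda$ is the shift-symmetric partition of $\Delta_n\setminus\lambda$ (as illustrated in \cref{shift_symm_rect}, this follows from a direct case check of the rotation $(i,j)\mapsto(n-j,n-i)$ applied to $\shyd{\Delta_n}\setminus\shyd{\lambda}$), and (b) the lengths satisfy
\begin{equation*}
\ell(\lambda)+\ell(\Delta_n\setminus\lambda)=n-1.
\end{equation*}
For (b) one notes that, since $\lambda$ is strict with $\lambda_{\ell(\lambda)}\geq 1$, the smallest column index $j$ occurring in $\shyd{\Delta_n}\setminus\shyd{\lambda}$ is $\ell(\lambda)+1$ (attained in row $\ell(\lambda)+1$), and after rotation this becomes the largest row index $n-\ell(\lambda)-1$ of $\shyd{\Delta_n\setminus\lambda}$; boundary cases ($\lambda=\emptyset$ or $\lambda=\Delta_n$) are handled separately.

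Now I would apply \cref{fL} three times to replace $\shf{\lambda}$, $\shf{\Delta_n\setminus\lambda}$ and $\shf{\Delta_n}$ by $\sqrt{f_\Lambda},\sqrt{f_{\Box_n\setminus\Lambda}},\sqrt{f_{\Box_n}}$, respectively, together with the central binomial and power-of-two factors. Squaring the proposed identity and using $\ell(\lambda)+\ell(\Delta_n\setminus\lambda)=\ell(\Delta_n)=n-1$ so that the powers of $2$ cancel, the claim reduces to the purely numerical identity
\begin{equation*}
\binom{2N}{2k}\binom{2k}{k}\binom{2(N-k)}{N-k}
=\binom{N}{k}^{2}\binom{2N}{N},
\end{equation*}
which both sides verify to $(2N)!/(k!)^{2}((N-k)!)^{2}$ by expanding the factorials.

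The main obstacle is the bookkeeping in step two: one must be confident that the rotation identifying $\Delta_n\setminus\lambda$ really is a strict partition and that its shift-symmetric diagram coincides with $\Box_n\setminus\Lambda$. Once that geometric correspondence and the length identity are in place, the rest is an application of \cref{fL} followed by a short manipulation of central binomial coefficients, so the lemma should follow cleanly.
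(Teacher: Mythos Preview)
Your proposal is correct and follows essentially the same approach as the paper: express the probability as $\shf{\lambda}\,\shf{\Delta_n\setminus\lambda}/\shf{\Delta_n}$, observe that $\Box_n\setminus\Lambda$ is the shift-symmetric partition of $\Delta_n\setminus\lambda$, and then apply \cref{fL}. The paper's proof is simply terser, leaving the length identity $\ell(\lambda)+\ell(\Delta_n\setminus\lambda)=n-1$ and the resulting binomial manipulation implicit, whereas you spell these out explicitly; both are straightforward and your verification of them is correct.
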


\begin{proof}
The left hand side of \eqref{eq:prob} is equal to
\begin{equation}\label{eq:prob1}
\P_n\big(\tab\in\T{n}:\tab(\shyd{\lambda})=[k]\big)
=\frac{\shf{\lambda}\cdot\shf{\Delta_n\setminus\lambda}}{\shf{\Delta_n}}
\,.
\end{equation}
It is not difficult to see that $\Box_n\setminus\Lambda$ is the shift-symmetric partition of $\Delta_n\setminus\lambda$.
See \cref{shift_symm_rect}. 
Thus \eqref{eq:prob1} can be computed by means of \cref{fL} above.
\end{proof}

The right hand side of \eqref{eq:prob} is essentially the same as \cite[Eq.~(7)]{PR07}.
This allows us to prove an analogue of \cite[Lem.~1]{PR07} for the shifted staircase.

\smallskip
Fix $n$ and let $\lambda$ be a partition with $\abs{\lambda}=k$.
Define a function $\gamma_{\lambda}:\R_{>0}\to\R_{\geq0}$ by
\begin{equation*}
\gamma_{\lambda}(x)=\frac{1}{n-1}\lambda_{\ceil{(n-1)x}}
\,,
\end{equation*}
where by convention $\lambda_i=0$ for $i>\ell(\lambda)$.
See \cref{russian}.
It is often more convenient to work with rotated coordinates
\begin{equation}\label{eq:uv}
u=\frac{x-y}{\sqrt{2}},\qquad
v=\frac{x+y}{\sqrt{2}}.
\end{equation}
Define $g_{\lambda}:\R\to\R_{\geq0}$ by
\begin{equation*}
g_{\lambda}(u)
=\sup\big\{v:v=\abs{u}\text{ or }x>0,y\leq\gamma_{\lambda}(x)\big\}
\,.
\end{equation*}
The function $g_{\lambda}$ is just a rotated version of the function $\gamma_{\lambda}$, however, it has the advantage of being $1$-Lipschitz while $\gamma_{\lambda}$ is only non-increasing.
Note that
\begin{equation*}
\int_{0}^{\infty}\gamma_{\lambda}(x)\matd x
=\int_{\R}g_{\lambda}(u)-\abs{u}\matd u
=\frac{k}{(n-1)^2}
\,.
\end{equation*}
Note that $g_{\lambda}$ describes the boundary of the scaled Young diagram $\yd{\lambda}$ in \emph{Russian convention}.
If the partition $\lambda$ is strict we define $G_{\lambda}:\R_{\leq0}\to\R_{\geq0}$ by
\begin{equation*}
G_{\lambda}(u)
=\sup\Big\{v:v=\abs{u}\text{ or }x>0,y\leq\gamma_{\lambda}(x)+\frac{\ceil{(n-1)x}-1}{(n-1)}\Big\}
\,.
\end{equation*}
The function $G_{\lambda}$ describes the boundary of the scaled shifted Young diagram $\shyd{\lambda}$.
See \cref{shifted_russian}.

\begin{figure}[t]
\centering
\begin{tikzpicture}
\begin{scope}
\draw[thinLine,black!30](0,1)--(3,1)(0,2)--(2,2)(0,3)--(2,3)(0,4)--(1,4)(0,5)--(1,5)(1,0)--(1,6)(2,0)--(2,4)(3,0)--(3,2)(4,0)--(4,1);
\draw[thinLine,->](0,0)--(0,7)node[anchor=south]{$y$};
\draw[thinLine,->](0,0)--(7,0)node[anchor=west]{$x$};
\draw[thickLine,blue](0,6)--(1,6)(1,4)--(2,4)(2,2)--(3,2)(3,1)--(4,1)(4,0)--(6,0);
\draw(4,0)node[anchor=north]{$\frac{4}{n-1}$};
\draw[blue](3,3)node{$\gamma_{\lambda}$};
\end{scope}
\begin{scope}[xshift=16cm,scale=.7071]
\draw[thinLine,black!30](-7,7)--(0,0)--(7,7)(-5,5)--(-4,6)(-4,4)--(-3,5)--(1,1)(-3,3)--(-1,5)(-2,2)--(0,4)--(2,2)(-1,1)--(2,4)--(3,3);
\draw[thinLine,->](0,0)--(0,9)node[anchor=south]{$v$};
\draw[thinLine,->](-8,0)--(8,0)node[anchor=west]{$u$};
\draw[thinLine,dashed]
(-6,0)node[anchor=north]{$-\frac{3\sqrt{2}}{n-1}$}--(-6,6)
(4,0)node[anchor=north]{$\frac{2\sqrt{2}}{n-1}$}--(4,4);
\draw[thickLine,blue](-7,7)--(-6,6)--(-5,7)--(-3,5)--(-2,6)--(0,4)--(1,5)--(2,4)--(3,5)--(4,4)--(7,7);
\draw[blue](2,6)node{$g_{\lambda}$};
\end{scope}
\end{tikzpicture}
\caption{The functions $\gamma_{\lambda}$ and $g_{\lambda}$ for the partition $(6,4,2,1)$.}
\label[fig]{russian}
\end{figure}
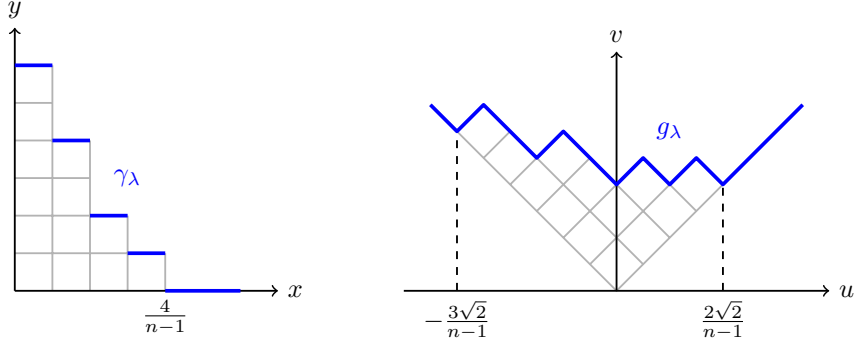

\begin{figure}[]
\centering
\begin{subfigure}{.4\textwidth}
\begin{tikzpicture}
\begin{scope}[scale=.7071]
\draw[thinLine,black!30](-8,8)--(0,0)--(1,1)(-5,5)--(-4,6)--(1,1)(-4,4)--(-2,6)--(1,3)--(-1,1)(-3,3)--(1,7)--(0,8)(-2,2)--(1,5)--(-1,7);
\draw[thinLine,->](0,0)--(0,9)node[anchor=south]{$v$};
\draw[thinLine,->](-9,0)--(3,0)node[anchor=west]{$u$};
\draw[thinLine,dashed]
(-6,0)node[anchor=north]{$-\frac{3\sqrt{2}}{n}$}--(-6,6)
(0,0)node[anchor=north]{$0$};
\draw[thickLine,blue](-8,8)--(-6,6)--(-5,7)--(-4,6)--(-3,7)--(-2,6)--(0,8);
\draw[blue](-4,8)node{$G_{\lambda}$};
\end{scope}
\end{tikzpicture}
\caption{The function $G_{\lambda}$ for the partition $(6,4,2,1)$.}
\label[fig]{shifted_russian}
\end{subfigure}
\begin{subfigure}{.5\textwidth}
\centering
\includegraphics[width=\textwidth]{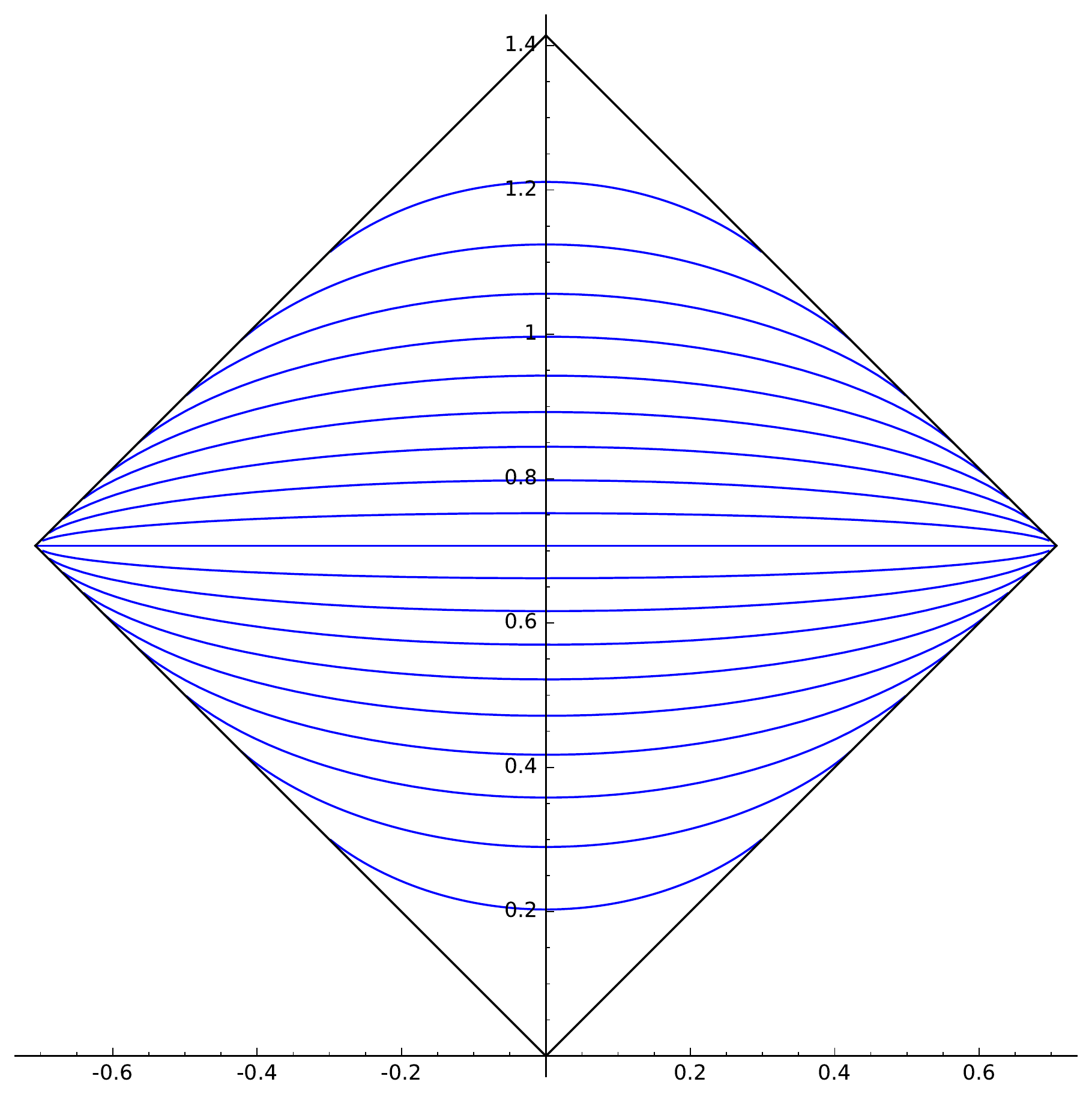}
\caption{The curves $v = g_{\alpha}(u)$ for $\alpha = 0.05, 0.1, \dots, 0.95$.}
\label[fig]{g_alpha}
\end{subfigure}
\caption{}
\end{figure}

\begin{lem}\label[lem]{variational_problem}
Let $\alpha\in(0,1)$, $k=k(n)$ be a sequence such that $k/N\to\alpha$ as $n\to\infty$, and let $\P_n$ denote the uniform probability measure on $\T{n}$.
Then, as $n\to\infty$,
\begin{equation*}
\P_n\big(\tab\in\T{n}:\tab(\shyd{\lambda})=[k]\big)=
\exp\Big(-\big(1+o(1)\big)\frac{n^2}{2}\big(I(\gamma_{\Lambda})+H(\alpha)+C\big)\Big)
\end{equation*}
uniformly over all strict partitions $\lambda$ of $k$ with $\lambda_1<n$, where 
\begin{equation*}
\begin{split}
C&=\frac{3}{2}-2\ln2,\\
H(\alpha)&=-\alpha\ln(\alpha)-(1-\alpha)\ln(1-\alpha),\\
I(\gamma)&=\int_0^1\int_0^1\ln\abs{\gamma(x)+\gamma^{-1}(y)-x-y}\matd y\matd x,\\
\gamma^{-1}(y)&=\inf\{x\in[0,1]:\gamma(x)\leq y\}
\,,
\end{split}
\end{equation*}
and $\Lambda$ denotes the shift-symmetric partition of $\lambda$.
\end{lem}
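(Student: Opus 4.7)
The plan is to reduce the claim directly to the non-shifted analogue \cite[Lem.~1]{PR07} via the identity from \cref{prob}. The key structural observation is that the right-hand side of \eqref{eq:prob} factors cleanly as
\[
\P_n\big(\tab(\shyd{\lambda})=[k]\big) = \binom{N}{k}^{-1} \binom{2N}{2k}^{1/2} \sqrt{P_{\mathrm{rect}}(\Lambda)},
\]
where $P_{\mathrm{rect}}(\Lambda):=f_\Lambda\,f_{\Box_n\setminus\Lambda}/f_{\Box_n}$ is the probability that, in a uniformly random SYT of the near-square rectangle $\Box_n = (n^{n-1})$, the cells of $\yd{\Lambda}$ contain precisely the entries $1,\ldots,|\Lambda|$. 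The two sets defining $\yd{\Lambda}$ in \cref{fL} are disjoint (one strictly above, one on or below the main diagonal), so $|\Lambda|=2k$ and hence $|\Lambda|/|\Box_n|=2k/(2N)\to\alpha$.

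I would next control the binomial prefactor using Stirling's formula: since $\log\binom{N}{k}=NH(\alpha)+O(\log n)$ and $\log\binom{2N}{2k}=2NH(\alpha)+O(\log n)$, the two contributions $NH(\alpha)$ cancel and
\[
\log\Big(\binom{N}{k}^{-1}\binom{2N}{2k}^{1/2}\Big) = O(\log n) = o(n^2).
\]
The main step is to invoke \cite[Lem.~1]{PR07}, which asserts that
\[
\log P_{\mathrm{rect}}(\Lambda) = -(1+o(1))\,n^2\big(I(\gamma_\Lambda) + H(\alpha) + C\big)
\]
uniformly over sub-partitions $\Lambda\subseteq\Box_n$ with $|\Lambda|/|\Box_n|\to\alpha$. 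Taking logarithms in the factorisation and combining the two estimates yields the asserted asymptotic at rate $\tfrac{n^2}{2}(I(\gamma_\Lambda)+H(\alpha)+C)$.

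The chief obstacle lies in applying \cite[Lem.~1]{PR07}, which is phrased for the exact $n\times n$ square, while our $\Box_n$ is the $(n-1)\times n$ near-square. The PR07 argument depends only on Stirling asymptotics for the hook-product formula together with a variational/large-deviation analysis for the rescaled profile, both of which are insensitive to the ratio of sides as long as it tends to $1$, so the proof transfers. One still needs to verify uniformity of the error term over all strict $\lambda$ with $\lambda_1<n$ and to check that the profile $\gamma_\Lambda$ defined here agrees, up to a vanishing rescaling between scaling by $n$ and by $n-1$, with the object used in \cite{PR07}.
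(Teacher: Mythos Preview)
Your proposal is correct and follows essentially the same route as the paper: apply \cref{prob}, observe via Stirling that the prefactor $\binom{N}{k}^{-1}\binom{2N}{2k}^{1/2}$ is sub-exponential in $n^2$ (the paper computes it more precisely as $(\pi N\alpha(1-\alpha))^{1/4}(1+o(1))$, but your $O(\log n)$ suffices), and then feed $f_{\Lambda}f_{\Box_n\setminus\Lambda}/f_{\Box_n}$ into the analysis of \cite[Lem.~1]{PR07}. The only cosmetic difference is that the paper phrases the last step as ``proceed exactly as in the proof of \cite[Lem.~1]{PR07}'' rather than citing the lemma as a black box and arguing the near-square case transfers; your explicit discussion of the $(n-1)\times n$ versus $n\times n$ discrepancy is in fact a point the paper leaves implicit.
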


\begin{proof}
We use \cref{prob} and proceed exactly as in the proof of \cite[Lem.~1]{PR07}.
Note that
\begin{equation*}
-\ln\bigg(\binom{N}{k}^{-1}\cdot\sqrt{\binom{2N}{2k}}\bigg)
=-\frac{1}{4}\ln\big(\alpha(1-\alpha)\pi N\big)+\mathcal{O}(1)
\end{equation*}
as $n\to\infty$ by Stirling's approximation, thus these terms do not contribute to the analysis.
\end{proof}

Results of the type of \cref{variational_problem} lead to a so called large deviation principle.
Suppose that $\lambda$ is the strict partition of size $k$ with $\lambda_1<n$ such that its shift-symmetric partition $\Lambda$ minimises the integral $I(\gamma_{\Lambda})$.
If $\mu$ is a different strict partition of size $k$ with $\mu_1<n$ then by \cref{variational_problem} the probability that $\mu$ contains the numbers $1,\dots,k$ in a random shifted SYT $\tab\in\T{n}$ decays exponentially as $\mu$ deviates from $\lambda$.
This means that the shape formed by the entries $1,\dots,k$ in a random shifted SYT will be close to the minimising partition $\lambda$ with high probability.
One is therefore lead to the variational problem of identifying the function $\gamma$ within a certain search space depending on $\alpha$ that minimises the integral $I(\gamma)$.

A function
\begin{equation*}
g:[-\sqrt{2}/2,\sqrt{2}/2]\to[0,\sqrt{2}]
\end{equation*}
is called \emph{$\alpha$-admissible} if it is $1$-Lipschitz and satisfies
\begin{equation*}
\int_{-\sqrt{2}/2}^{\sqrt{2}/2}g(u)-\abs{u}\matd u=\alpha
\,.
\end{equation*}
As is explained in \cite[Sec.~2.2]{PR07} our problem is equivalent the following formulation:
For each $\alpha\in(0,1)$ find the unique $\alpha$-admissible function $g$ which is symmetric, that is, $g(-u)=g(u)$, and minimises the integral
\begin{equation}\label{eq:variational_K}
K(g)=-\frac{1}{2}
\int_{-\sqrt{2}/2}^{\sqrt{2}/2}
\int_{-\sqrt{2}/2}^{\sqrt{2}/2}
g'(s)g'(t)\ln\abs{s-t}\matd s\matd t
\,.
\end{equation}
The only difference between our situation and the situation in \cite{PR07} is the fact that our search space is smaller since we require that $\Lambda$ is the shift-symmetric partition of a strict partition.
In \cite[Sec.~2 and~3]{PR07} Pittel and Romik show that the variational problem \eqref{eq:variational_K} without the assumption $g(-u)=g(u)$ has the unique solution $\tilde{g}_{\alpha}$ given by
\begin{equation}\label[eq]{g_tilde}
\tilde{g}_{\alpha}(u)=
\begin{cases}
g_{\alpha}(u)&\quad\text{if }\abs{u}\leq\sqrt{2\alpha(1-\alpha)},\\
\abs{u}&\quad\text{if }\sqrt{2\alpha(1-\alpha)}\leq\abs{u}\leq\sqrt{2}/2,
\end{cases}
\end{equation}
where
\begin{equation*}
g_{\alpha}:[-\sqrt{2\alpha(1-\alpha)},\sqrt{2\alpha(1-\alpha)}]\to\R
\end{equation*}
is defined as
\begin{equation*}
g_{\alpha}(u)
=\frac{2u}{\pi}\tan^{-1}\Bigg(\frac{(1-2\alpha)u}{\sqrt{2\alpha(1-\alpha)-u^2}}\Bigg)
+\frac{\sqrt{2}}{\pi}\tan^{-1}\Bigg(\frac{\sqrt{2(2\alpha(1-\alpha)-u^2)}}{1-2\alpha}\Bigg)
\,,
\end{equation*}
if $0<\alpha<1/2$, and
\begin{equation*}
\tilde{g}_{\alpha}(u)=\sqrt{2}-\tilde{g}_{1-\alpha}(u)
\end{equation*}
for $1/2<\alpha<1$, and
\begin{equation*}
\tilde{g}_{1/2}(u)
=\frac{\sqrt{2}}{2}
\,.
\end{equation*}
The family of functions $g_{\alpha}$ is illustrated in \cref{g_alpha}. Since this solution already exhibits the additional symmetry $\tilde{g}_{\alpha}(-u)=\tilde{g}_{\alpha}(u)$, we may apply it to the shifted case as well.

Let
\begin{equation*}
\overline{L}:\big\{(u,v)\in\R^2:-\sqrt{2}/2\leq u\leq 0,\abs{u}\leq v\leq\sqrt{2}-\abs{u}\big\}\to\R_{\geq0}
\end{equation*}
be the surface defined by the level curves $v=g_{\alpha}(u)$ for $\alpha\in(0,1)$.
Let
\begin{equation*}
L:\big\{(x,y)\in\R^2:0\leq x\leq y\leq1\big\}\to\R_{\geq0}
\end{equation*}
be the rotated version of $\overline{L}$, that is, $L(x,y)=\overline{L}(u,v)$, where $(x,y)$ and $(u,v)$ are related as in \eqref{eq:uv}.

\smallskip
The following lemma collects analytic results on the integral $K(g)$ and the functions $\tilde{g}_{\alpha}$.

\begin{lem}\label{analysis}
\begin{enumerate}[(i)]
\item\label{Kcontinuous}
There exists a constant $c_K>0$ such that for all $1$-Lipschitz functions $g,h$ we have
\begin{equation*}
\abs{K(g-h)}\leq c_K\norm{g-h}
\,.
\end{equation*}
\item\label{ana1}
There exists a function $c:(2,3)\to\R_{>0}$ such that for all $r\in(2,3)$ and all $\alpha$-admissible functions $g$ we have
\begin{equation*}
K(g)+H(\alpha)-\ln2\geq c(r)\norm{g-\tilde{g}_{\alpha}}^r
\,.
\end{equation*}
\item\label{ana2}
Let $(x,y)\in(0,1)\times(0,1)$, let $(u,v)$ be given as in \eqref{eq:uv}, set $\alpha=\overline{L}(u,v)$, and set
\begin{equation*}
\sigma(x,y)=\min\big(xy,(1-x)(1-y)\big)
\,.
\end{equation*}
Then there exists constants $c_1>0$ and $c_2>0$ such that for all $\beta\in(0,1)$ and all $\delta<c_2\sigma(x,y)^2$ we have
\begin{equation*}
\abs{\tilde{g}_{\alpha}(u)-\tilde{g}_{\beta}(u)}<\delta c_1\sqrt{\sigma(x,y)}
\qquad\Rightarrow\qquad
\abs{\alpha-\beta}<\delta
\,.
\end{equation*}
\end{enumerate}
\end{lem}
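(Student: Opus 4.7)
The overall strategy is that all three items are purely analytic statements about the bilinear functional $K$ and the explicit family $\tilde{g}_{\alpha}$, with no dependence on whether the tableau is shifted or square. Since the minimizer $\tilde{g}_{\alpha}$ of \eqref{eq:variational_K} on the $\alpha$-admissible functions coincides with the Pittel--Romik minimizer (and already satisfies the symmetry $\tilde{g}_{\alpha}(-u)=\tilde{g}_{\alpha}(u)$), the plan is to transport the corresponding statements from \cite[Lem.~2 and Lem.~3]{PR07} essentially verbatim, recording the minor adjustments imposed by our setting.

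For \eqref{Kcontinuous}, I would write $f = g-h$, which is $2$-Lipschitz with $\|f\|_{\infty}$ possibly small. The idea is to perform one integration by parts in $s$ in the formula $K(f) = -\tfrac{1}{2}\int\!\!\int f'(s)f'(t)\ln|s-t|\,\matd s\,\matd t$ to move one derivative off of $f$, producing a boundary term bounded by $\|f\|_{\infty}\cdot\int|\ln|\cdot||$ and a principal-value integral of the form $\int\!\!\int f(s)f'(t)\,(s-t)^{-1}\matd s\,\matd t$. The latter is controlled by the $L^{\infty}$ norm of $f$ times the Lipschitz constant of $f'$ (which is bounded since both $g$ and $h$ are 1-Lipschitz), yielding $|K(f)|\le c_K\|f\|_{\infty}$.

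For \eqref{ana1}, the first step is to recall that $\tilde{g}_{\alpha}$ is the unique minimizer of $K$ on $\alpha$-admissible functions with minimum value $\ln 2 - H(\alpha)$, so the left-hand side is nonnegative. For the quantitative lower bound, I would use the strict convexity of $K$: because $K(g)-K(\tilde{g}_{\alpha})$ reduces (after polarization and using that both $g$ and $\tilde{g}_{\alpha}$ are $\alpha$-admissible) to $K(g-\tilde{g}_{\alpha})$, and $K$ restricted to derivatives of zero mean is a positive-definite logarithmic energy, this quantity dominates a fractional Sobolev seminorm of $g-\tilde{g}_{\alpha}$. Interpolating this with the uniform Lipschitz bound $\|(g-\tilde{g}_{\alpha})'\|_{\infty}\le 2$ via a Gagliardo--Nirenberg inequality yields a lower bound of the form $c(r)\|g-\tilde{g}_{\alpha}\|_{\infty}^{r}$ for any $r\in(2,3)$, the range of admissible interpolation exponents.

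For \eqref{ana2}, I would work with the explicit formula for $\tilde{g}_{\alpha}$ and compute $\partial_{\alpha}\tilde{g}_{\alpha}(u)$ directly. The goal is to show that at the point $u=(x-y)/\sqrt{2}$ corresponding to a fixed $(x,y)\in(0,1)^{2}$, one has $|\partial_{\alpha}\tilde{g}_{\alpha}(u)|\ge c_{1}^{-1}/\sqrt{\sigma(x,y)}$ uniformly for $\alpha$ in a neighbourhood of $\overline{L}(u,v)$ of size comparable to $\sigma(x,y)^{2}$. Since the level curves $v=g_{\alpha}(u)$ of \cref{g_alpha} sweep monotonically through the interior, this derivative is positive; its size is governed by how close $u$ is to the boundary arcs $v=|u|$ and $v=\sqrt{2}-|u|$, which is exactly measured by $\sqrt{\sigma(x,y)}$. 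The mean value theorem then transforms the smallness assumption $|\tilde{g}_{\alpha}(u)-\tilde{g}_{\beta}(u)|<\delta c_{1}\sqrt{\sigma(x,y)}$ into $|\alpha-\beta|<\delta$, after restricting to $\delta<c_{2}\sigma(x,y)^{2}$ so that $\beta$ stays in the neighbourhood where the derivative estimate is valid.

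The main obstacle will be part \eqref{ana1}: getting the explicit interpolation exponent $r\in(2,3)$ and the correct quantitative form of positive definiteness of the logarithmic energy. The cleanest route is to cite the Pittel--Romik argument, which uses the Fourier representation of the logarithmic kernel to identify $K$ with the $H^{1/2}$-seminorm of $g$ and then applies interpolation inequalities; adapting this to our symmetric subclass requires no new ideas, only a check that the minimizer lies in the restricted search space, which is immediate from the symmetry of $\tilde{g}_{\alpha}$.
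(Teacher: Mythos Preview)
Your overall strategy is exactly that of the paper: all three items are analytic facts about $K$ and the family $\tilde{g}_{\alpha}$ that have nothing to do with the shifted setting, so they can be lifted directly from Pittel--Romik. The paper's own proof is in fact nothing more than three citations: part~\eqref{Kcontinuous} from the proof of \cite[Lem.~2]{PR07}, part~\eqref{ana1} from \cite[Thm.~7 and Lem.~4]{PR07}, and part~\eqref{ana2} is verbatim \cite[Lem.~5]{PR07}. Your attribution to ``Lem.~2 and Lem.~3'' is slightly off, so if you keep the citation approach, correct the references.

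Where you go beyond the paper is in sketching the actual analytic arguments rather than merely citing them. That is fine and even useful, but two of your sketches have technical slips worth fixing before you commit them to the page. In \eqref{Kcontinuous} you invoke ``the Lipschitz constant of $f'$''; however $f=g-h$ is only $2$-Lipschitz, so $f'$ is merely bounded in $L^\infty$, not Lipschitz. The bound you need after integration by parts is $\|f'\|_{\infty}\le 2$, not any regularity of $f'$. In \eqref{ana1} the reduction $K(g)-K(\tilde g_\alpha)=K(g-\tilde g_\alpha)$ via polarization requires that the cross term $\int\!\!\int \tilde g_\alpha'(s)(g-\tilde g_\alpha)'(t)\ln|s-t|\,\matd s\,\matd t$ vanish; this is the content of the variational equation satisfied by the minimizer (cf.\ \cite[Thm.~7]{PR07}), not a formal consequence of both functions being $\alpha$-admissible, so you should cite that rather than claim it follows from admissibility alone. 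Your outline for \eqref{ana2} matches the mean-value-theorem argument in \cite[Lem.~5]{PR07}.
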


\begin{proof}
Claim~\eqref{Kcontinuous} follows from the proof of \cite[Lem.~2]{PR07}.
Claim~\eqref{ana1} is an immediate consequence of \cite[Thm.~7 and Lem.~4]{PR07}.
Claim~\eqref{ana2} is precisely the statement of \cite[Lem.~5]{PR07}. 
\end{proof}

\smallskip
The last needed ingredient is a bound on the expected number of entries less than $k$ in the first row of a random shifted SYT.
We start with an auxiliary result. 
Given a partition $\lambda$, let $\lambda^+$ denote the partition obtained from $\lambda$ by adding a cell to the first row, that is, $\lambda^+=(\lambda_1+1,\lambda_2,\dots,\lambda_{\ell(\lambda)})$.

\begin{lem}\label[lem]{amusing}
Let $m,n\in\N$, $\Box=(n^m)$ be a rectangle and $\lambda$ a partition such that $\yd{(\lambda^+)}\subseteq\yd{\Box}$.
Then
\begin{equation*}
\frac{f_{\lambda}\cdot f_{\Box\setminus\lambda^+}}{f_{\lambda^+}\cdot f_{\Box\setminus\lambda}}
=\frac{(m+\lambda_1)(n-\lambda_1)}{(\abs{\lambda}+1)(mn-\abs{\lambda})}.
\end{equation*}
\end{lem}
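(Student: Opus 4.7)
My plan is to apply the hook length formula (\cref{hlf}) to each of the four counts $f_\lambda$, $f_{\lambda^+}$, $f_{\Box\setminus\lambda}$, $f_{\Box\setminus\lambda^+}$ and chase the resulting products of hook lengths. The factorial prefactors combine to $\frac{1}{(\abs{\lambda}+1)(mn-\abs{\lambda})}$, which accounts for the denominator in the desired formula, so the task reduces to proving
\begin{equation*}
\frac{\prod_{u\in\yd{\lambda^+}}h_{\lambda^+}(u)\cdot\prod_{u\in\yd{\Box\setminus\lambda}}h_{\Box\setminus\lambda}(u)}{\prod_{u\in\yd{\lambda}}h_{\lambda}(u)\cdot\prod_{u\in\yd{\Box\setminus\lambda^+}}h_{\Box\setminus\lambda^+}(u)}=(m+\lambda_1)(n-\lambda_1).
\end{equation*}

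The first step is to pinpoint which hook lengths actually differ. Going from $\lambda$ to $\lambda^+$ affects only the first row: each $h_\lambda(1,j)$ for $1\le j\le\lambda_1$ is replaced by $h_\lambda(1,j)+1$, and a new cell $(1,\lambda_1+1)$ with hook $1$ is appended. Analogously, because the $i$-th row of $\Box\setminus\lambda$ has length $n-\lambda_{m-i+1}$, the diagram $\Box\setminus\lambda$ differs from $\Box\setminus\lambda^+$ by the single cell $(m,p)$ with $p:=n-\lambda_1$, and only hooks in row $m$ and column $p$ change. The hooks in row $m$ to the left of the added cell satisfy $h_{\Box\setminus\lambda}(m,j)=p-j+1$ and $h_{\Box\setminus\lambda^+}(m,j)=p-j$, so their ratio telescopes over $1\le j\le p-1$ and contributes the factor $n-\lambda_1$.

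The crux is to match up the first-row hooks of $\lambda,\lambda^+$ with the column-$p$ hooks of $\Box\setminus\lambda,\Box\setminus\lambda^+$. Writing $h_\lambda(1,j)=\lambda_1-j+\lambda'_j$ and grouping the values of $j$ by $\lambda'_j=k$, the first-row hooks sweep out $\{1,\dots,m+\lambda_1-1\}$ except for the $m-1$ gap values $\lambda_1-\lambda_k+k-1$ for $k=2,\dots,m$ (using the convention $\lambda_k=0$ for $k>\ell(\lambda)$). A direct calculation then shows that these gap values are precisely $h_{\Box\setminus\lambda^+}(i,p)$ for $1\le i\le m-1$ (via $k=m-i+1$), which gives
\begin{equation*}
\prod_{j=1}^{\lambda_1}h_\lambda(1,j)\cdot\prod_{i=1}^{m-1}h_{\Box\setminus\lambda^+}(i,p)=(m+\lambda_1-1)!.
\end{equation*}
The same argument applied to $\lambda^+$ and $\Box\setminus\lambda$, where the two extra corner cells $(1,\lambda_1+1)\in\yd{\lambda^+}$ and $(m,p)\in\yd{\Box\setminus\lambda}$ each contribute a factor of $1$, yields $(m+\lambda_1)!$ on the right-hand side. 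Taking the quotient extracts the missing factor $m+\lambda_1$, and together with the row-$m$ factor $n-\lambda_1$ this completes the proof. The main obstacle will be verifying the gap/column identification above (in particular making sure the bookkeeping is uniform across the edge cases $\ell(\lambda)<m$ and $\lambda_1=n-1$); the rest is routine telescoping.
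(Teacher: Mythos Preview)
Your argument is correct. Both proofs start from the hook-length formula and reduce to comparing first-row hooks of $\lambda,\lambda^+$ with hooks in the ``changed'' row and column of $\Box\setminus\lambda,\Box\setminus\lambda^+$; the difference is in how the cancellation is organised. The paper groups the columns $j\in[\lambda_1]$ into maximal intervals of constant $\lambda_j'$, telescopes within each interval, and then rewrites the surviving factors as products over the outer corners $A$ and inner corners $B$ of $\lambda$; the same corner products appear (with exponents swapped) in $f_{\Box\setminus\lambda^+}/f_{\Box\setminus\lambda}$, and cancel. Your route instead invokes the classical ``beta-number'' identity that the first-row hooks $\{\lambda_1-j+\lambda_j'\}_{j\le\lambda_1}$ and the complementary values $\{\lambda_1-\lambda_k+k-1\}_{2\le k\le m}$ partition $\{1,\dots,m+\lambda_1-1\}$, then checks directly that the latter set coincides with the column-$p$ hooks of $\Box\setminus\lambda^+$. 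This sidesteps the corner bookkeeping and yields the two factorials $(m+\lambda_1-1)!$ and $(m+\lambda_1)!$ at once, which is arguably cleaner; the paper's corner formulation, on the other hand, makes the connection to transition probabilities (and hook-walk style arguments) more transparent. Either way the computation is elementary; your identification $h_{\Box\setminus\lambda^+}(i,p)=\lambda_1-\lambda_k+k-1$ with $k=m-i+1$ is correct, and the edge cases $\ell(\lambda)<m$ and $\lambda_1=n-1$ go through without modification once $\lambda$ is padded with zeros.
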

\begin{proof}
This was proved by Pittel and Romik in the case where $\Box$ is an $n$ times $n$ square \cite[Eq.~(71)]{PR07}.
The proof of the generalisation to rectangles relies on the same idea.

\smallskip
First note that
\begin{equation}\label{eq:ffrac}
\frac{f_{\lambda}}{f_{\lambda^+}}
=\frac{\abs{\lambda}!}{(\abs{\lambda}+1)!}\prod_{u\in\yd{\lambda}}\frac{h_{\lambda^+}(u)}{h_{\lambda}(u)}
=\frac{1}{(\abs{\lambda}+1)}\prod_{j=1}^{\lambda_1}\frac{h_{\lambda^+}(1,j)}{h_{\lambda}(1,j)}
\,.
\end{equation}
Divide $[\lambda_1]$ into maximal sub-intervals $[i_r,j_r]$ such that $\lambda_j'=\lambda_{i_r}'$ for all $i_r\leq j\leq j_r$.
After even more cancellation \eqref{eq:ffrac} is equal to
\begin{equation*}
\frac{1}{(\abs{\lambda}+1)}\prod_{r}\frac{h_{\lambda^+}(1,i_r)}{h_{\lambda}(1,j_r)}
=\frac{1}{(\abs{\lambda}+1)}\prod_{r}\frac{\lambda_1-i_r+\lambda_{i_r}'}{\lambda_1-j_r+\lambda_{j_r}'}
\,,
\end{equation*}
which we rewrite as
\begin{equation}\label{eq:corners_f}
\frac{\chi}{(\abs{\lambda}+1)}
\prod_{(u_1,u_2)\in A}(\lambda_1-u_2+u_1)^{-1}
\prod_{(u_1,u_2)\in B}(\lambda_1-u_2+u_1)
\,,
\end{equation}
where $A$ denotes the set of cells $u\in\yd{\lambda}$ such that $\e u,\s u\notin\yd{\lambda}$, $B$ denotes the set of cells $u\in\yd{\Box}\setminus\yd{\lambda}$ such that $\n u,\w u\notin(\yd{\Box}\setminus\yd{\lambda})$, and
\begin{equation*}
\chi=
\begin{cases}
(m+\lambda_1)&\quad\text{if }\ell(\lambda)=m,\\
1&\quad\text{otherwise.}
\end{cases}
\end{equation*}

Divide the set $[m-1]$ into maximal sub-intervals $[i_r,j_r]$ such that $(\Box\setminus\lambda^+)_j=n-\lambda_{m+1-i_r}$ 
for all $i_r\leq j\leq j_r$.
Then
\begin{equation*}
\frac{f_{\Box\setminus\lambda^+}}{f_{\Box\setminus\lambda}}
=
\frac{n-\lambda_1}{(mn-\abs{\lambda})}\prod_{r}\frac{h_{\Box\setminus\lambda}(n-\lambda_1,i_r)}{h_{\Box\setminus\lambda^+}(n-\lambda_1,j_r)}
\,,
\end{equation*}
which can be written as
\begin{equation}\label{eq:corners_Boxf}
\frac{(n-\lambda_1)\cdot\overline{\chi}}{(mn-\abs{\lambda})}
\prod_{(u_1,u_2)\in A}(\lambda_1-u_2+u_1)
\prod_{(u_1,u_2)\in B}(\lambda_1-u_2+u_1)^{-1}
\,,
\end{equation}
where $A$ and $B$ are defined as above, and
\begin{equation*}
\overline{\chi}=
\begin{cases}
1&\quad\text{if }\ell(\lambda)=m,\\
(m+\lambda_1)&\quad\text{otherwise.}
\end{cases}
\end{equation*}
The claim now follows from the observation that almost all factors that appear in \eqref{eq:corners_f} and \eqref{eq:corners_Boxf} cancel.
\end{proof}

The following lemma provides us with an analogue of \cite[Eq.~(75)]{PR07} for the shifted case.
Given $n,k\in\N$ with $k<N$, let $I_{n,k}:\T{n}\to\{0,1\}$ denote the random variable that takes the value $1$ if the entry $k$ is contained in the first row, and $0$ otherwise.
Moreover, let $J_{n,k}=\sum_{i=1}^kI_{n,i}$ denote the number of entries at most $k$ in the first row of a shifted SYT.

\begin{lem}\label[lem]{expected}
Let $k,n\in\N$ with $k<N$, and let $\E_n$ denote the expected value with respect to the uniform probability measure $\P_n$ on $\T{n}$.
Then
\begin{equation*}
\E_n^2[I_{n,k}]
<\E_{n}\left[\frac{2N-(J_{n,k})^2}{k(N-k+1)}\right]
\,.
\end{equation*}
\end{lem}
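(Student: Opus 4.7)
The proof will be a Cauchy--Schwarz argument combined with the structural identities in \cref{fL} and \cref{amusing}. First I would express $\E_n[I_{n,k}]$ combinatorially as a sum over strict sub-partitions of $\Delta_n$ of size $k$: if $\lambda$ is the sub-shape formed by $\{1,\ldots,k\}$ in a random $\tab\in\T{n}$, the event $I_{n,k}=1$ means entry $k$ sits at $(1,\lambda_1)$, which requires $(1,\lambda_1)$ to be a corner, i.e.\ $\lambda_1 \ge \lambda_2 + 2$. Conditional on the shape being $\lambda$, this happens with probability $\shf{\lambda^-}/\shf{\lambda}$, where $\lambda^-$ is $\lambda$ with $(1,\lambda_1)$ deleted. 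Writing $p_\lambda = \shf{\lambda}\shf{\Delta_n\setminus\lambda}/\shf{\Delta_n}$ and $q_\lambda = \shf{\lambda^-}/\shf{\lambda}$, this yields $\E_n[I_{n,k}] = \sum_\lambda p_\lambda q_\lambda$ (summed over $\lambda$ satisfying the corner condition), and Cauchy--Schwarz together with $\sum_\lambda p_\lambda \le 1$ gives $\E_n[I_{n,k}]^2 \le \sum_\lambda p_\lambda q_\lambda^2$.

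The core computation is then to evaluate $\sum_\lambda p_\lambda q_\lambda^2$ in closed form. I would translate all shifted hook-length products into ordinary ones via \cref{fL} applied to both $\lambda$ and $\Delta_n\setminus\lambda$, passing to the shift-symmetric partitions $\Lambda,\Lambda^-\subseteq\Box_n=(n^{n-1})$. Removing the cell $(1,\lambda_1)$ from $\shyd{\lambda}$ corresponds to removing two cells from $\yd{\Lambda}$: one at the end of row $1$ and, by the symmetry of the shift-symmetric construction, one at the end of column $1$. Accordingly \cref{amusing} is applied twice --- once for the row-$1$ deletion and once (via a transpose) for the column-$1$ deletion; the second application uses the conjugate rectangle, so the roles of the two dimensions of $\Box_n$ swap. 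Under the corner condition $\lambda_1\ge\lambda_2+2$ (which forces $\lambda_1\ge 2$) one verifies $\ell(\lambda)=\ell(\lambda^-)$ and $\ell(\Delta_n\setminus\lambda)=\ell(\Delta_n\setminus\lambda^-)$, so that the auxiliary $2^{\ell(\cdot)}/\binom{2\abs{\cdot}}{\abs{\cdot}}$ factors coming from \cref{fL} cancel cleanly against the binomials produced by the two ratios in \cref{amusing}. One then obtains an identity of the form
\begin{equation*}
p_\lambda q_\lambda^2 \;=\; p_{\lambda^-}\cdot\frac{(n+\lambda_1-1)(n-\lambda_1)}{k(N-k+1)},
\end{equation*}
whose numerator equals $2N-\lambda_1(\lambda_1-1)$ and is thus closely related to $2N-(J_{n,k})^2$ with $J_{n,k}=\lambda_1$.

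Summing this identity over $\lambda$ and reindexing by $\lambda^-$ converts $\sum_\lambda p_\lambda q_\lambda^2$ into an expectation with respect to the shape at time $k-1$. Using $J_{n,k}=J_{n,k-1}+I_{n,k}$ with $I_{n,k}\in\{0,1\}$, together with the strictness $\sum_\lambda p_\lambda<1$ produced when the corner condition excludes positive-probability shapes, one rewrites the bound in the asserted form $\E_n[(2N-(J_{n,k})^2)/(k(N-k+1))]$ with strict inequality. The main technical hurdle is the bookkeeping of the length-parity corrections in \cref{fL} across the double application of \cref{amusing}; a secondary obstacle is the arithmetic reindexing from an expectation at time $k-1$ (involving $J_{n,k-1}(J_{n,k-1}+1)$) to the time-$k$ form involving $(J_{n,k})^2$ that appears on the right-hand side.
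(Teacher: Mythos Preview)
Your approach is essentially identical to the paper's: both express $\E_n[I_{n,k}]$ as a weighted sum over strict sub-partitions, apply Jensen/Cauchy--Schwarz against the probability measure $\mu\mapsto\shf{\mu}\shf{\Delta_n\setminus\mu}/\shf{\Delta_n}$, and then evaluate the resulting ratio via \cref{fL} together with two applications of \cref{amusing} (once directly, once after transposing) to obtain the factor $(n+\lambda_1)(n-\lambda_1-1)/\big(k(N-k+1)\big)$. The only cosmetic difference is that the paper indexes the sum by partitions of size $k-1$ (writing $\lambda^+$ for the shape at time $k$) whereas you index by partitions of size $k$ (writing $\lambda^-$ for the shape at time $k-1$); the paper also leaves the final reindexing from $J_{n,k-1}$ to $J_{n,k}$ implicit, just as you flag it as a secondary bookkeeping step.
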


\begin{proof}
Our proof is very similar to the first part of the proof of \cite[Lem.~10]{PR07}.

\smallskip
Let $\Y_{n,k}$ denote the set of all strict partitions $\lambda$ of size $k$ with $\lambda_1<n-1$.
Note that
\begin{equation*}
\begin{split}
\E_n[I_{n,k}]
&=\P_n\big(\tab\in\T{n}:\tab^{-1}(k)=(1,j)\text{ for some }j\in[n-1]\big)\\
&=\sum_{\lambda\in\Y_{n,k-1}}\frac{\shf{\lambda}\cdot\shf{\Delta_n\setminus\lambda^+}}{\shf{\Delta_n}}\\
&=\sum_{\lambda\in\Y_{n,k-1}}\frac{\shf{\lambda^+}\cdot\shf{\Delta_n\setminus\lambda^+}}{\shf{\Delta_n}}\cdot\frac{\shf{\lambda}}{\shf{\lambda^+}}.
\end{split}
\end{equation*}
Using the fact that $\mu\mapsto\shf{\mu}\cdot\shf{\Delta_n\setminus\mu}/\shf{\Delta_n}$ defines a probability measure on the set of strict partitions $\mu$ of size $k$ with $\mu_1<n$, and the convexity of the square function we obtain
\begin{equation}\label{eq:evil_sum}
\begin{split}
\E_n^2[I_{n,k}]
&\leq\sum_{\lambda\in\Y_{n,k-1}}\frac{\shf{\lambda^+}\cdot\shf{\Delta_n\setminus\lambda^+}}{\shf{\Delta_n}}\cdot\left(\frac{\shf{\lambda}}{\shf{\lambda^+}}\right)^2\\
&=\sum_{\lambda\in\Y_{n,k-1}}\frac{\shf{\lambda}\cdot\shf{\Delta_n\setminus\lambda}}{\shf{\Delta_n}}\cdot\frac{\shf{\lambda}\cdot\shf{\Delta_n\setminus\lambda^+}}{\shf{\lambda^+}\cdot\shf{\Delta_n\setminus\lambda}}.
\end{split}
\end{equation}
Let $L,M$ and $\Box_n$ denote the shift-symmetric partitions of $\lambda, \lambda^+$ and $\Delta_n$, respectively.
\cref{fL} yields
\begin{equation}\label{eq:four_fsh}
\begin{split}
\frac{\shf{\lambda}\cdot\shf{\Delta_n\setminus\lambda^+}}{\shf{\lambda^+}\cdot\shf{\Delta_n\setminus\lambda}}
&=4\cdot\sqrt{\frac{(k-\frac{1}{2})(N-k+\frac{1}{2})}{k(N-k+1)}}\cdot\sqrt{\frac{f_L\cdot f_{\Box_n\setminus M}}{f_M\cdot f_{\Box_n\setminus L}}}\\
&=4\cdot\sqrt{\frac{(k-\frac{1}{2})(N-k+\frac{1}{2})}{k(N-k+1)}}
\cdot\sqrt{\frac{f_L\cdot f_{\Box_n\setminus L^+}}{f_{L^+}\cdot f_{\Box_n\setminus L}}}
\cdot\sqrt{\frac{f_{(L^+)'}\cdot f_{(\Box_n\setminus M)'}}{f_{M'}\cdot f_{(\Box_n\setminus L^+)'}}}
\,.
\end{split}
\end{equation}
Using $M'=((L^+)')^+$ and \cref{amusing} twice we obtain
\begin{equation*}
\begin{split}
&\frac{f_L\cdot f_{\Box_n\setminus L^+}}{f_{L^+}\cdot f_{\Box_n\setminus L}}
\cdot\frac{f_{(L^+)'}\cdot f_{(\Box_n\setminus M)'}}{f_{M'}\cdot f_{(\Box_n\setminus L^+)'}}
=\frac{(n+\lambda_1)^2(n-\lambda_1-1)^2}{16(k-\frac{1}{2})(N-k+1)k(N-k+\frac{1}{2})}
\,.
\end{split}
\end{equation*}
Inserting this into \eqref{eq:four_fsh} we obtain
\begin{equation}\label{eq:no_more_roots}
\frac{(n+\lambda_1)(n-\lambda_1-1)}{k(N-k+1)}
\,.
\end{equation}
Combining \eqref{eq:evil_sum} and \eqref{eq:no_more_roots} yields the claim.
\end{proof}

We now prove the limit shape theorem for shifted SYT of staircase shape chosen uniformly at random.
Our result is an analogue of \cite[Thm.~1]{PR07}.
The obtained limit shape is the same as the limit shape for random SYT of square shape except that the domain is restricted from a square to a triangle.

In particular \eqref{eq:shape} provides point-wise convergence to the limit surface, while \eqref{eq:convergence_rate} specifies the rate of convergence if we assume a sufficient distance to the catheti.

\begin{thm}\label[thm]{limit_shape}
For $n\in\N$ let $\Delta_n$ denote the staircase partition of size $N=\binom{n}{2}$, $\T{n}$ the set of shifted SYT of shape $\shyd{\Delta_n}$, and $\P_n$ the uniform probability measure on $\T{n}$.
Then for all $\epsilon>0$
\begin{equation}\label{eq:shape}
\lim_{n\to\infty}\P_n\left(\tab\in\T{n}:
\max_{(i,j)\in\shyd{\Delta_n}}
\abs{\frac{\tab(i,j)}{N}
-L\Big(\frac{i}{n},\frac{j}{n}\Big)}
>\epsilon\right)=0
\,.
\end{equation}
Moreover for all $p\in(0,1/2)$ and all $q\in(0,p/2)$ such that $2p+q<1$
\begin{equation}\label{eq:convergence_rate}
\lim_{n\to\infty}\P_n\Bigg(\tab\in\T{n}:
\max_{\substack{(i,j)\in\shyd{\Delta_n}\\
\sigma(i/n,j/n)>n^{-q}}}
\abs{\frac{\tab(i,j)}{N}
-L\Big(\frac{i}{n},\frac{j}{n}\Big)}
>n^{-p}
\Bigg)=0
\,,
\end{equation}
where $\sigma(x,y)=\min\{xy,(1-x)(1-y)\}$.
\end{thm}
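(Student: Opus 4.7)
I follow Pittel and Romik's strategy for~\cite[Thm.~1]{PR07}, adapted to the shifted setting with the ingredients developed in this section. I first establish \eqref{eq:convergence_rate}, then deduce \eqref{eq:shape} by controlling the first row and (by symmetry) the last column separately.

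\emph{Step 1 (concentration of the boundary curves).} For $\tab\in\T{n}$ and $k\in[N]$, let $\lambda^{(k)}(\tab)$ denote the strict partition whose shifted Young diagram equals $\tab^{-1}([k])$, write $\Lambda^{(k)}$ for its shift-symmetric partition (\cref{fL}), and set $g^{(k)}=g_{\Lambda^{(k)}}$. Combining \cref{variational_problem} with \cref{analysis}(ii), and absorbing the sub-exponential count of strict partitions of $k$ with first part $<n$ into the Gaussian-type tail, one obtains
\begin{equation*}
\P_n\bigl(\norm{g^{(k)}-\tilde{g}_{k/N}}>\delta\bigr)\leq\exp\bigl(-\tfrac{1}{2}(1+o(1))\,c(r)\,n^2\,\delta^r\bigr)
\end{equation*}
for each $r\in(2,3)$ and each $\delta\gg n^{-1}$, uniformly in $k$ with $k/N$ in a compact sub-interval of $(0,1)$.

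\emph{Step 2 (rate estimate).} Fix a cell $(i,j)\in\shyd{\Delta_n}$ with $\sigma(i/n,j/n)>n^{-q}$, set $(x,y)=(i/n,j/n)$, and let $(u,v)$ be the rotated image via \eqref{eq:uv}. Write $k=\tab(i,j)$, $\alpha=k/N$, $\beta=L(x,y)$. Since $(i,j)$ is a corner of $\lambda^{(k)}$, the cell $(i,j+1)$ is a corner of $\yd{\Lambda^{(k)}}$, so the $1$-Lipschitz function $g^{(k)}$ satisfies $|g^{(k)}(u)-v|=O(1/n)$. Moreover $\tilde{g}_\beta(u)=v$ by the definition of $L$, so on the event $\{\norm{g^{(k)}-\tilde{g}_\alpha}\leq\epsilon\}$ the triangle inequality yields $|\tilde{g}_\alpha(u)-\tilde{g}_\beta(u)|\leq\epsilon+O(1/n)$. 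Apply \cref{analysis}(iii) with its parameter $\delta$ equal to $n^{-p}$: the side condition $n^{-p}<c_2\sigma^2$ reduces to $2q<p$, which holds; choosing $\epsilon=c_1 n^{-p-q/2}/2$ then forces $|\alpha-\beta|<n^{-p}$. A union bound over the at most $\binom{n}{2}$ eligible cells bounds the total failure probability by $O\bigl(n^2\exp(-c'\,n^{2-r(p+q/2)})\bigr)$. Since the hypothesis $2p+q<1$ is equivalent to $p+q/2<1/2$, any $r\in(2,3)$ satisfies $r(p+q/2)<3/2<2$, so this bound tends to $0$. This proves \eqref{eq:convergence_rate}.

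\emph{Step 3 (deduction of \eqref{eq:shape}).} The rate estimate \eqref{eq:convergence_rate} already covers the diagonal $x=y$ away from the corners, where $\sigma(x,x)=\min(x,1-x)^2$ is bounded below. The remaining cells lie near the first row ($i=1$, where $\sigma\leq1/n$) or the last column ($j=n-1$). For the first row, \cref{expected} combined with Chebyshev's inequality yields concentration of $J_{n,k}$ about its expectation, which can in turn be matched with the $L$-predicted frontier along $x=0$ to give $\tab(1,j)/N\to L(1/n,j/n)$ uniformly in $j$. The last column is handled by applying the same argument to the reversed tableau: by \cref{symmetry}, reflecting $\shyd{\Delta_n}$ across its anti-diagonal while replacing each entry $k$ by $N+1-k$ preserves $\P_n$ and interchanges the first row with the last column. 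Combining these boundary estimates with \eqref{eq:convergence_rate} yields \eqref{eq:shape}. The principal obstacle is Step 1---the uniform-in-$k$ propagation of the $o(1)$ exponent in \cref{variational_problem} together with a sum over all eligible partitions---and the second-moment comparison for $J_{n,k}$ in Step 3; both follow the template laid down in~\cite{PR07}, but demand careful bookkeeping.
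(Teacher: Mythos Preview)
Your proposal is correct and follows essentially the same route as the paper: both transplant Pittel--Romik's argument via \cref{variational_problem}, \cref{analysis} and \cref{expected}, first deriving the tail bound on $\norm{g_{\Lambda}-\tilde g_\alpha}$ (your Step~1 is the paper's \eqref{eq:probk}), then converting it to a pointwise estimate through \cref{analysis}\,\eqref{ana2} (your Step~2 is \eqref{eq:probij}), and finally handling the boundary rows and columns with \cref{expected} together with the anti-diagonal symmetry. One small slip to fix in Step~2: the claim that \emph{any} $r\in(2,3)$ works is too strong, since the exponential must also dominate the linear-in-$n$ term $Cn$ coming from the partition count you absorbed in Step~1, which forces $r(p+q/2)<1$ rather than merely $<2$; the paper handles this by choosing $r=2+\epsilon_3$ with $\epsilon_3$ small relative to the gap $\tfrac12-(p+q/2)$.
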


\begin{proof}
The first part \eqref{eq:shape} is proven in the same way as \cite[Thm.~1~(i)]{PR07} in \cite[Sec.~4]{PR07} with \cref{expected} taking the place of \cite[Eq.~(75)]{PR07}.
The proof of \eqref{eq:convergence_rate} is essentially the same as the proof of \cite[Thm.~1~(ii)]{PR07} given in \cite[Sec.~2.3]{PR07} using \cref{variational_problem} in place of \cite[Lem.~1]{PR07}.
Below we only demonstrate the details for \eqref{eq:convergence_rate}.

\smallskip
Let $p'(k)$ denote the number of strict partitions of size $k$.
Then
\begin{equation}\label{eq:pn}
p'(k)\sim
\frac{3^{3/4}}{12k^{3/4}}\exp\left(\pi\sqrt{k/3}\right)
\,,
\end{equation}
as $k\to\infty$.
Confer \cite[Fig.~I.9]{FlaSed}.

Given $k=\alpha N$ and a tableau $\tab\in\T{n}$ let $\lambda_{\tab,k}$ denote the partition with shifted Young diagram $\shyd{\lambda_{\tab,k}}=\tab^{-1}([k])$, and $\Lambda_{\tab,k}$ denote the shift-symmetric partition of $\lambda_{\tab,k}$.
Note that given the shift-symmetric partition $\Lambda$ of a strict partition $\lambda$, the function $g_{\Lambda}$ is $1$-Lipschitz but not $\alpha$-admissible, since $g_\Lambda(u)-|u|$ might be positive starting from
$-\frac{\sqrt 2}{2}\frac{n+1}{n}$.
However, we can always chose an $\alpha$-admissible function $\hat{g}_{\Lambda}$ such that
\begin{equation}\label{eq:to_admissible}
\norm{g_{\Lambda}-\hat{g}_{\Lambda}}
\leq\frac{\sqrt 2}{n}
\,.
\end{equation}
There exists a constant $C>0$ such that for all $r\in(2,3)$ and all $\epsilon_1>0$
\begin{equation}\label{eq:probk}
\begin{split}
&\P_n\big(\tab\in\T{n}:\norm{g_{\Lambda_{\tab,k}}-\tilde{g}_{\alpha}}>\epsilon_1\big)
\\&\quad
=\sum_{\substack{\shyd{\lambda}\subseteq\shyd{\Delta_n},\abs{\lambda}=k\\
\norm{g_{\Lambda}-\tilde{g}_{\alpha}}>\epsilon_1}}
\P_n\big(\tab\in\T{n}:\tab(\shyd{\lambda})=[k]\big)
\\&\quad\overset{\mathclap{\text{\cref{variational_problem}}}}{\leq}\qquad
p'(k)\max_{\substack{\shyd{\lambda}\subseteq\shyd{\Delta_n},\abs{\lambda}=k\\
\norm{g_{\Lambda}-\tilde{g}_{\alpha}}>\epsilon_1}}
\exp\Big(-(1+o(1))\frac{n^2}{2}\big(K(g_{\Lambda})+H(\alpha)-\ln2\big)\Big)
\\&\quad\overset{\mathclap{\substack{\text{\eqref{eq:to_admissible} and}\\\text{\cref{analysis}~\eqref{Kcontinuous}}}}}{=}\qquad
p'(k)\max_{\substack{\shyd{\lambda}\subseteq\shyd{\Delta_n},\abs{\lambda}=k\\
\norm{g_{\Lambda}-\tilde{g}_{\alpha}}>\epsilon_1}}
\exp\Big(-(1+o(1))\frac{n^2}{2}\big(K(\hat{g}_{\Lambda})+H(\alpha)-\ln2\big)\Big)
\\&\quad
\overset{\mathclap{\substack{\text{\eqref{eq:pn} and}\\\text{\cref{analysis}~\eqref{ana1}}}}}{\leq}\qquad
\exp\Big(Cn-\frac{c(r)}{2}n^2\epsilon_1^r\Big)
\,,
\end{split}
\end{equation}
as $n\to\infty$.

For $(i,j)\in\shyd{\Delta_n}$ set $\beta=L(i/n,j/n)$.
Given $\tab\in\T{n}$ set $\alpha_{\tab}=\tab(i,j)/N$.
For all $r\in(2,3)$ and all $\delta>0$ that satisfy
\begin{equation*}
\delta<c_2\sigma(i/n,j/n)^2
\,,
\end{equation*}
if $n$ is large enough, then
\begin{equation}\label{eq:probij}
\begin{split}
&\P_n\Bigg(\tab\in\T{n}:\abs{\frac{\tab(i,j)}{N}-L\Big(\frac{i}{n},\frac{j}{n}\Big)}>\delta\Bigg)
\\&\quad
=\P_n\Big(\tab\in\T{n}:\abs{\alpha_{\tab}-\beta}>\delta\Big)
\\&\quad
\overset{\mathclap{\text{\cref{analysis}~\eqref{ana2}}}}{\leq}\qquad
\P_n\Big(\tab\in\T{n}:\abs{\tilde{g}_{\alpha_\tab}(u)-\tilde{g}_{\beta}(u)}>\delta c_1\sqrt{\sigma(i/n,j/n)}\Big)
\\&\quad
=\P_n\Big(\tab\in\T{n}:\abs{G_{\lambda_{\tab,\tab(i,j)}}(u)-\tilde{g}_{\alpha_{\tab}}(u)}>\delta c_1\sqrt{\sigma(i/n,j/n)}\Big)
\\&\quad
\leq\P_n\Bigg(\tab\in\T{n}:\abs{g_{\Lambda_{\tab,\tab(i,j)}}(u)-\tilde{g}_{\alpha_{\tab}}(u)}>\delta c_1\sqrt{\sigma(i/n,j/n)}-\frac{\sqrt{2}}{n}\Bigg)
\\&\quad
\leq\P_n\Big(\tab\in\T{n}:\norm{g_{\Lambda_{\tab,\tab(i,j)}}-\tilde{g}_{\alpha_{\tab}}}>\frac{\delta c_1}{2}\sqrt{\sigma(i/n,j/n)}\Big)
\\&\quad
\overset{\mathclap{\text{\eqref{eq:probk}}}}{\leq}\,
\exp\Big(Cn-\frac{c(r)}{2}\left(\frac{c_1}{2}\right)^rn^2\left(\delta\sqrt{\sigma(i/n,j/n)}\right)^r\Big)
\,.
\end{split}
\end{equation}
Suppose $\delta=n^{-p}$ and $\sigma(i/n,j/n)>n^{-q}$ for some $p>0$ and $q>0$ such that $p>2q$ and
\begin{equation*}
p+\frac{q}{2}
<\frac{1}{2}-\epsilon_2
\,.
\end{equation*}
Then for $n$ large enough 
\begin{equation*}
\delta
=n^{-p}
<n^{-2q}
<c_2\sigma(i/n,j/n)^2
\end{equation*}
and
\begin{equation*}
\frac{\delta c_1}{2}\sqrt{\sigma(i/n,j/n)}
>\frac{c_1}{2}n^{-p-q/2}
>n^{-1/2+\epsilon_2}
\,.
\end{equation*}
For all $\epsilon_2>0$ there exists $\epsilon_3>0$ such that the choice
\begin{equation*}
\epsilon_1>n^{-1/2+\epsilon_2},\qquad
r=2+\epsilon_3
\end{equation*}
yields
\begin{equation*}
\lim_{n\to\infty}\exp\Big(Cn-\frac{c(r)}{2}n^2\epsilon_1^r\Big)
=0
\,.
\end{equation*}
Thus since the number of cells in $\shyd{\Delta_n}$ is only quadratic in $n$, we obtain \eqref{eq:convergence_rate} by taking the union bound in \eqref{eq:probij} over all possible cells.
\end{proof}

\section{Intermediate permutations}\label{S:permutations}
This section contains the derivation of the limit of intermediate permutation matrices in random 132-avoiding sorting networks, a parallel to \cite[Conj.~2]{AHRV07}.

The (Rothe) \emph{diagram} \[D(\sigma) = \big\{(i, j) \in \mathbb{N}^2 : 1 \leq i < i'\ \text{and}\ 1 \leq j < j'\ \text{for all}\ M(\sigma)_{i', j'} = 1 \big\}\] of a permutation $\sigma$ is the set of cells left unshaded when we shade all the cells weakly to the east and south of 1-entries in the permutation matrix $M(\sigma)$.
\begin{thm}[{\cite[Thm 3.1, Cor. 3.4]{LP18}}]\label[thm]{diag}
Let $w = w_1 \cdots w_N$ be a 132-avoiding sorting network. Then the shape of $Q_{w_1 \dots w_k}$ is $D(\sigma_k)$.
\end{thm}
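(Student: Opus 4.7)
The plan is to induct on $k$, with the trivial base case $k=0$. For the inductive step, set $\sigma=\sigma_{k-1}$ and $i=w_k$, so $\sigma_k=\sigma s_i$; in the paper's convention, right multiplication by $s_i$ swaps the values at positions $i$ and $i+1$ of $\sigma$. Since $Q_{w_1\cdots w_k}$ is obtained from $Q_{w_1\cdots w_{k-1}}$ by placing the entry $k$ at the bottom of column $i$, it suffices to prove that $D(\sigma s_i)$ is $D(\sigma)$ with a single extra cell at the bottom of column $i$.

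The first ingredient is that for every 132-avoiding permutation $\tau$, the Rothe diagram, characterized by $(r,c)\in D(\tau)$ iff $\tau(c)>r$ and $\tau^{-1}(r)>c$, is a Young diagram: each column $c$ is an initial segment $\{(1,c),\ldots,(h_c,c)\}$ with heights weakly decreasing in $c$. For column contiguity, suppose $(r,c)\in D(\tau)$ but $(r-1,c)\notin D(\tau)$ with $r\geq 2$: then $\tau^{-1}(r-1)=a\leq c$ and $\tau^{-1}(r)=b>c$, and combined with $\tau(c)>r$, either $a<c$ (so positions $a<c<b$ carry values $r-1,\tau(c),r$, a 132 pattern) or $a=c$ (forcing $\tau(c)=r-1<r$, a contradiction). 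The weakly decreasing heights follow from a symmetric argument applied to rows.

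The second ingredient is a local analysis of $D$ under $\sigma\mapsto\sigma s_i$ assuming $\sigma(i)<\sigma(i+1)$ (needed for the length to increase). A direct cell-by-cell comparison using the above characterization confines changes to columns $i$ and $i+1$. The cell $(\sigma(i),i)$ is always added; the only further candidates are additions of cells $(r,i)$ with $\sigma(i)<r<\sigma(i+1)$ and $\sigma^{-1}(r)>i$, and removals of cells $(r,i+1)$ in the same range with $\sigma^{-1}(r)>i+1$. If such an $r$ existed, then $\sigma^{-1}(r)\neq i+1$ (else $r=\sigma(i+1)$), so $\sigma^{-1}(r)>i+1$, and then positions $i<i+1<\sigma^{-1}(r)$ of $\sigma$ carry values $\sigma(i)<r<\sigma(i+1)$, a 132 pattern in $\sigma$ — contradiction. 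Hence $D(\sigma s_i)=D(\sigma)\sqcup\{(\sigma(i),i)\}$.

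To conclude, apply the first ingredient to $\sigma s_i$, which is 132-avoiding by hypothesis: column $i$ of $D(\sigma s_i)$ must be contiguous from row $1$, and since $D(\sigma)$'s column $i$ is $\{(1,i),\ldots,(h_i(\sigma),i)\}$ while the added cell lies at row $\sigma(i)$, this forces $\sigma(i)=h_i(\sigma)+1$, placing the new cell exactly at the bottom of column $i$. This closes the induction. The main technical obstacle is the case analysis in the third paragraph, where simultaneous 132-avoidance of both $\sigma$ and $\sigma s_i$ is essential in eliminating the would-be extraneous additions and deletions and leaving only the single bottom-of-column cell.
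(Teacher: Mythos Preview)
Your proof is correct. Note, however, that the paper does not actually supply a proof of this statement: it is quoted as \cite[Thm~3.1, Cor.~3.4]{LP18} and used as a black box. So there is no in-paper argument to compare against.

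Your approach is the natural one and almost certainly close in spirit to the cited source: induct on $k$, use that the Rothe diagram of a $132$-avoiding permutation is left- and top-justified (hence a Young diagram), and track the single-cell change in $D(\sigma)$ under right multiplication by $s_i$. A few small remarks on the write-up. In the second ingredient you assert that changes are confined to columns $i$ and $i+1$; this is true, but since $(\sigma')^{-1}$ also differs from $\sigma^{-1}$ at the values $\sigma(i)$ and $\sigma(i+1)$, a reader may worry about rows $\sigma(i)$ and $\sigma(i+1)$ as well --- it is worth one sentence noting that for $c\notin\{i,i+1\}$ the conditions $\sigma^{-1}(r)>c$ and $(\sigma')^{-1}(r)>c$ agree even at those two values of $r$. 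Your phrase ``positions $i<i+1<\sigma^{-1}(r)$ carry values $\sigma(i)<r<\sigma(i+1)$'' is slightly ambiguous; the values at those positions are $\sigma(i),\sigma(i+1),r$ in that order, satisfying $\sigma(i)<r<\sigma(i+1)$, which is indeed a $132$ pattern. Finally, in the closing paragraph you invoke $132$-avoidance of $\sigma s_i$ (to get that $D(\sigma s_i)$ is a Young diagram) only to pin down $\sigma(i)=h_i(\sigma)+1$; this is exactly right, and it is worth emphasising that \emph{this} is where the hypothesis on $\sigma_k$, not just $\sigma_{k-1}$, is used.
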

The height-$\alpha$ level curve of random staircase SYT at time $\alpha$ with $0 \leq \alpha \leq 1$ is given by $\overline{L}(u, v) = \alpha \Leftrightarrow v = g_{\alpha}(u)$. Recall the definition of the extension $\tilde{g}_{\alpha}$ of $g_{\alpha}$ to the full interval $[-\sqrt{2}/2, \sqrt{2}/2]$ in \eqref{g_tilde}.
Hence, the diagram of the intermediate permutation matrix at time $\alpha$ scaled by $1/n$ is determined by the (rotated) level curve $\frac{x+y}{\sqrt{2}} = \tilde{g}_{\alpha}(\frac{x-y}{\sqrt{2}})$ which, since $Q_w$ is non-shifted, also has to be (un)shifted. This can be done by sending $y$ to $y + x$ since $y_{\text{shifted}} = y_{\text{non-shifted}} + x$. Thus we have the curve $\frac{2x + y}{\sqrt{2}} = \tilde{g}_{\alpha}(\frac{-y}{\sqrt{2}})$ and so the explicit formula $x = \frac{1}{\sqrt{2}}(\tilde{g}_{\alpha}(\frac{-y}{\sqrt{2}}) - \frac{y}{\sqrt{2}})$. See \cref{translation} for an illustration of the discussion above.
\begin{figure}[ht]
\centering
	\begin{subfigure}[b]{.285\textwidth}
		\includegraphics[width=1.0\textwidth]{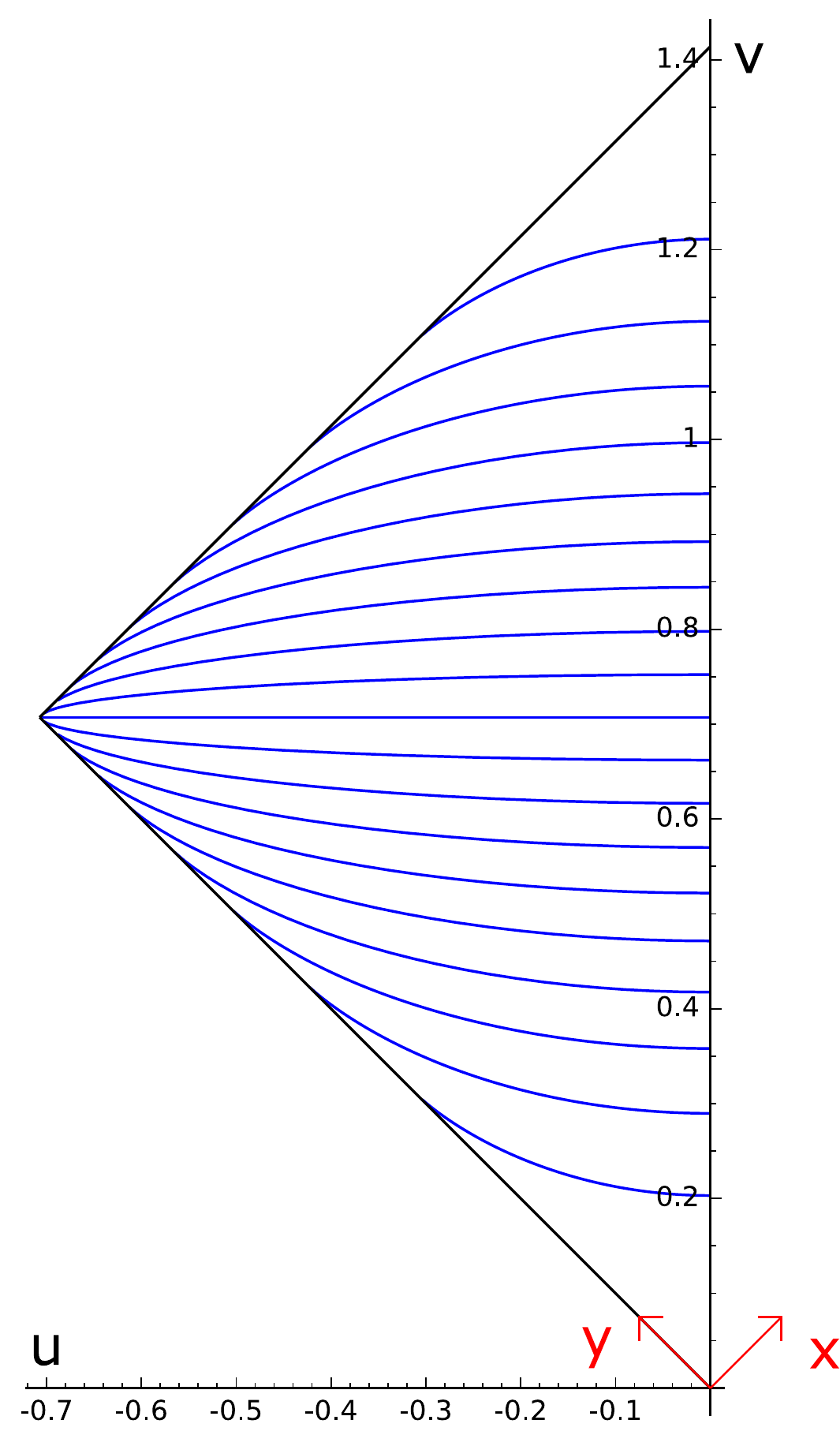}
		\caption{The curves $v = \tilde{g}_{\alpha}(u)$, $\alpha = 0.05, 0.1, \dots, 0.95$.}
	\end{subfigure}
	\quad
	\begin{subfigure}[b]{.315\textwidth}
		\includegraphics[width=1.0\textwidth]{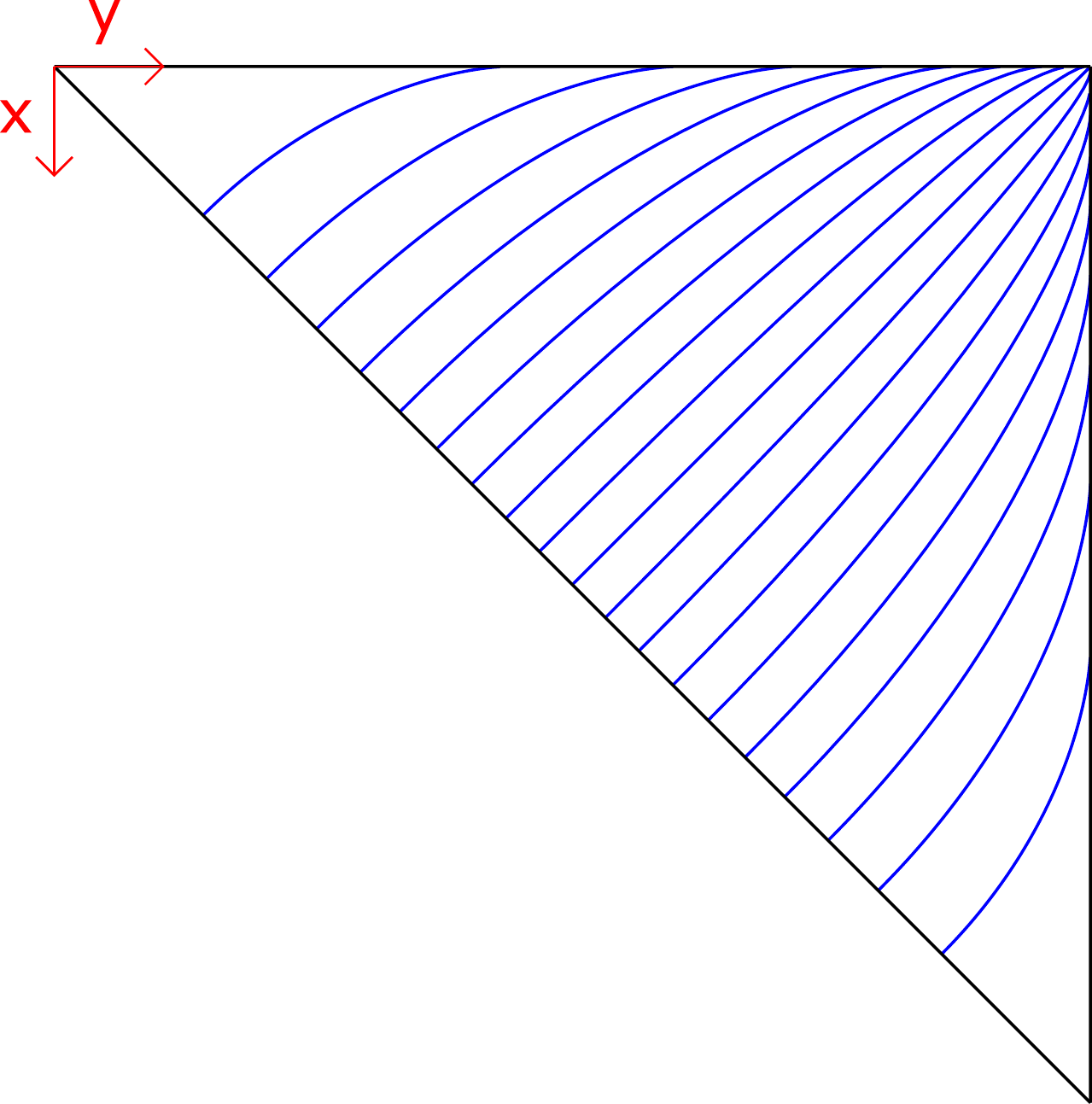}
		\caption{The previous picture after rotation.}
	\end{subfigure}
	\quad
	\begin{subfigure}[b]{.315\textwidth}
		\includegraphics[width=1.0\textwidth]{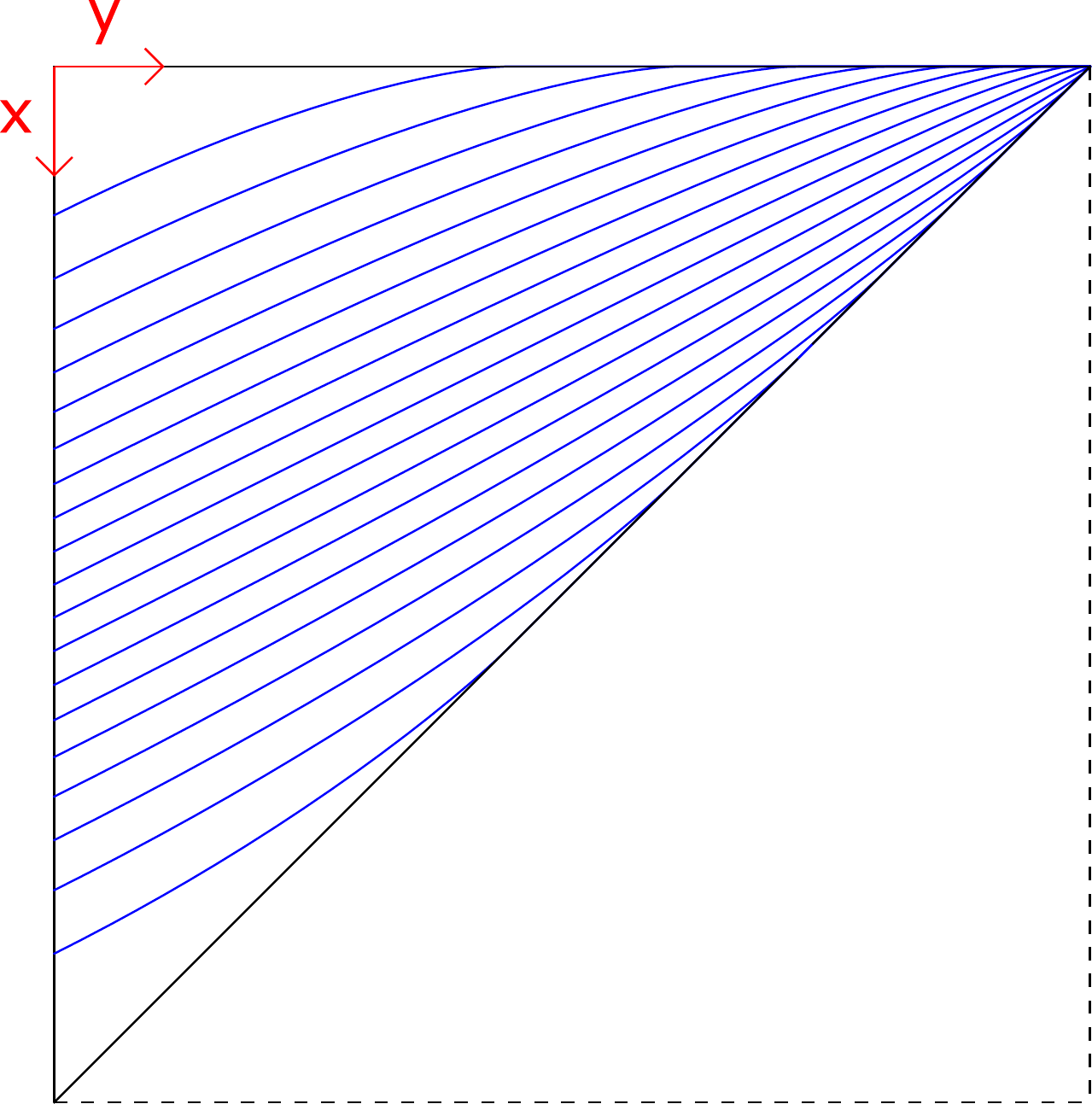}
		\caption{The previous picture after shifting to the left.}
		\label[fig]{transl_final}
	\end{subfigure}
	\caption{Translating the limit shape of shifted staircase SYT into the limit of the diagrams of the intermediate permutations. The blue curves in (c) are the limit curves of the diagrams of the intermediate permutations at times $\alpha = 0.05, 0.1, \dots, 0.95$.}
	\label[fig]{translation}
\end{figure}
\begin{thm}\label[thm]{intermediate_permutations}
Let $\sigma_0 = \text{id}$ and $\sigma_k = s_{w_1} \cdots s_{w_k}$ for $k \in [N]$, where $w = w_1 \dots w_N$ is a sorting network. Let $\P_n$ be the uniform probability measure on $\mathcal{R}^{132}_n$, the set of $n$-element 132-avoiding sorting networks. Finally, let
\[J_w(\alpha) = \big\{j \in [n] : \sigma_{\lfloor \alpha N \rfloor}(j) \leq \sigma_{\lfloor \alpha N \rfloor}(1)\big\}\] and $J_w^c(\alpha) = [n] \setminus J_n(\alpha)$. For all $0 \leq \alpha \leq 1$, $\epsilon > 0$, \[\P_n\biggl(w \in \mathcal{R}^{132}_n : \max_{j \in J_w(\alpha)}\biggl\vert\frac{\sigma_{\lfloor \alpha N \rfloor}(j)}{n} - \frac{1}{\sqrt{2}}\biggl(\tilde{g}_{\alpha}\biggl(\frac{-j}{n\sqrt{2}}\biggr) - \frac{j}{n\sqrt{2}}\biggr)\biggr\vert > \epsilon\biggr) \rightarrow 0,\] as $n \rightarrow \infty$. By symmetry, for all $0 \leq \alpha \leq 1$, $\epsilon > 0$, \[\P_n\biggl(w \in \mathcal{R}^{132}_n : \max_{j \in J_w^c(\alpha)}\biggl\vert\frac{\sigma_{\lfloor \alpha N \rfloor}(j)}{n} + \frac{1}{\sqrt{2}}\biggl(\tilde{g}_{1 - \alpha}\biggl(\frac{-j}{n\sqrt{2}}\biggr) - \frac{j}{n\sqrt{2}}\biggr) - 1\biggr\vert > \epsilon\biggr) \rightarrow 0,\] as $n \rightarrow \infty.$\end{thm}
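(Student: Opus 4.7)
The plan is to push \cref{limit_shape} through the Edelman--Greene bijection and read off the values $\sigma_{\lfloor \alpha N\rfloor}(j)$ from the boundary of the associated Rothe diagram, exploiting the fact that every intermediate permutation in a 132-avoiding sorting network is also 312-avoiding.

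\smallskip
Set $k=\lfloor \alpha N\rfloor$ and $\mu:=D(\sigma_k)$. Since $w\mapsto Q_w^{\rightarrow}$ is a bijection onto $\T n$, the shifted tableau $Q_w^{\rightarrow}$ is uniformly distributed. By \cref{diag}, the cells of $Q_w$ with entries $\le k$ form a sub-Young-diagram of shape $\mu$; equivalently, the cells of $Q_w^{\rightarrow}$ with entries $\le k$ form $\shyd\mu$. Applying \cref{limit_shape} at level $k/N\to\alpha$ shows that the scaled boundary of $\shyd\mu$ converges in probability to the level curve $v=\tilde g_\alpha(u)$; the unshifting transformation carried out in the text preceding the theorem statement then expresses the limiting boundary of $\mu$, in non-shifted coordinates $x=i/n$, $y=j/n$, as
\[
x\;=\;\frac{1}{\sqrt 2}\Bigl(\tilde g_\alpha\Bigl(\tfrac{-y}{\sqrt 2}\Bigr)-\tfrac{y}{\sqrt 2}\Bigr).
\]

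\smallskip
The combinatorial heart of the proof is the following claim: \emph{if $\sigma$ is 312-avoiding, then $\sigma^{-1}(v)=\mu_v+1$ for every $v\le\sigma(1)$}, where $\mu_v$ denotes the length of row $v$ of $D(\sigma)$. Since $\mu_v=|\{j<\sigma^{-1}(v):\sigma(j)>v\}|$, this is equivalent to ruling out any $v'<v$ with $\sigma^{-1}(v')<\sigma^{-1}(v)$. If such a $v'$ existed, then either $\sigma^{-1}(v')=1$ (forcing $\sigma(1)=v'<v\le\sigma(1)$, absurd), or the positions $1<\sigma^{-1}(v')<\sigma^{-1}(v)$ carry values $\sigma(1),v',v$ with $v'<v<\sigma(1)$ (the case $v=\sigma(1)$ is excluded since then $\sigma^{-1}(v)=1$), a 312 pattern. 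By the proposition following \cref{symmetry}, each $\sigma_k$ arising from a 132-avoiding sorting network is also 312-avoiding, so the claim applies to $\sigma_k$. Consequently $J_w(\alpha)=\{\mu_v+1:1\le v\le\sigma_k(1)\}$, and for $j=\mu_v+1\in J_w(\alpha)$ we have $\sigma_k(j)/n=v/n$ while $j/n\approx\mu_v/n$; substituting into the boundary formula above yields the first claimed limit. Uniform control over $J_w(\alpha)$ comes from \eqref{eq:shape} applied to the two cells of $\shyd{\Delta_n}$ straddling the value $k$ in each relevant row $v$, combined with the Lipschitz continuity of $\tilde g_\alpha$.

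\smallskip
The second formula follows by reversal. By \cref{symmetry}, $w':=w_N\cdots w_1$ is again a uniformly random 132-avoiding sorting network, and a direct product manipulation using $\wo=s_{w_1}\cdots s_{w_N}$ gives $\sigma'_{N-k}=\wo\sigma_k$, hence $\sigma'_{N-k}(i)=n+1-\sigma_k(i)$ and therefore $J_{w'}(1-\alpha)=J_w^c(\alpha)\cup\{1\}$. Substituting this identity into the first formula applied to $w'$ at level $1-\alpha$ produces the second. The main technical hurdle throughout is promoting the limit-shape convergence into a uniform bound over the random index set $J_w(\alpha)$, which is handled by invoking the uniformity already present in \eqref{eq:shape} together with the Lipschitz control on $\tilde g_\alpha$.
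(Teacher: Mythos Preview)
Your proposal is correct and follows the same high-level route as the paper's proof: combine \cref{diag} (shape of $Q_{w_1\cdots w_k}$ equals $D(\sigma_k)$), \cref{limit_shape}, and \cref{symmetry}, together with the coordinate computation in the paragraph preceding the theorem. The paper's own proof is a single sentence citing exactly these three ingredients.

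Where you go further is in making explicit the combinatorial link between the boundary of $D(\sigma_k)$ and the values $\sigma_k(j)$ for $j\in J_w(\alpha)$, via the 312-avoidance claim $\sigma^{-1}(v)=\mu_v+1$ for $v\le\sigma(1)$. The paper leaves this step entirely implicit in ``the discussion above.'' Your argument for it is clean and correct: a putative earlier occurrence of some $v'<v$ would force either $\sigma(1)=v'$ or a $312$ pattern at positions $1<\sigma^{-1}(v')<\sigma^{-1}(v)$, and the proposition following \cref{symmetry} guarantees every intermediate permutation is indeed $312$-avoiding. Your reversal computation $\sigma'_{N-k}=\wo\sigma_k$ yielding the second formula is also fine (the off-by-one between $N-\lfloor\alpha N\rfloor$ and $\lfloor(1-\alpha)N\rfloor$ is harmless by Lipschitz continuity).
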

\begin{proof}
\cref{diag} and \cref{limit_shape} together with the symmetry \cref{symmetry} imply the result by the discussion above.
\end{proof}
In particular, at $\alpha = \frac{1}{2}$ the diagram is bounded by the line $y = 1 - 2x$, which can be seen in \cref{matrix_sh}. Note that $(x, y) = 0$ is in the top-left corner.
\begin{figure}[ht]
	\begin{center}
		\includegraphics[width=.305\textwidth]{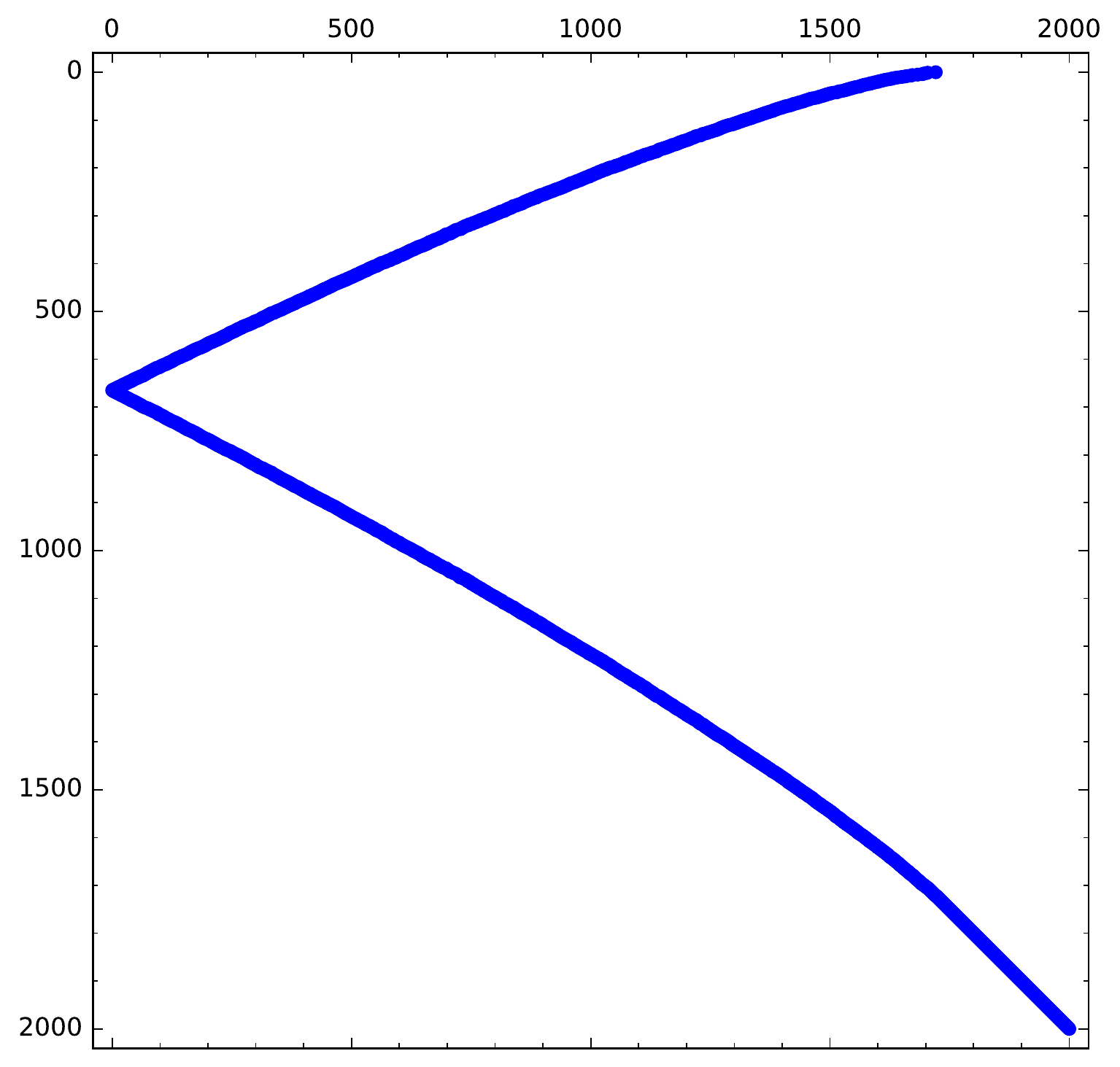}
		\quad
		\includegraphics[width=.305\textwidth]{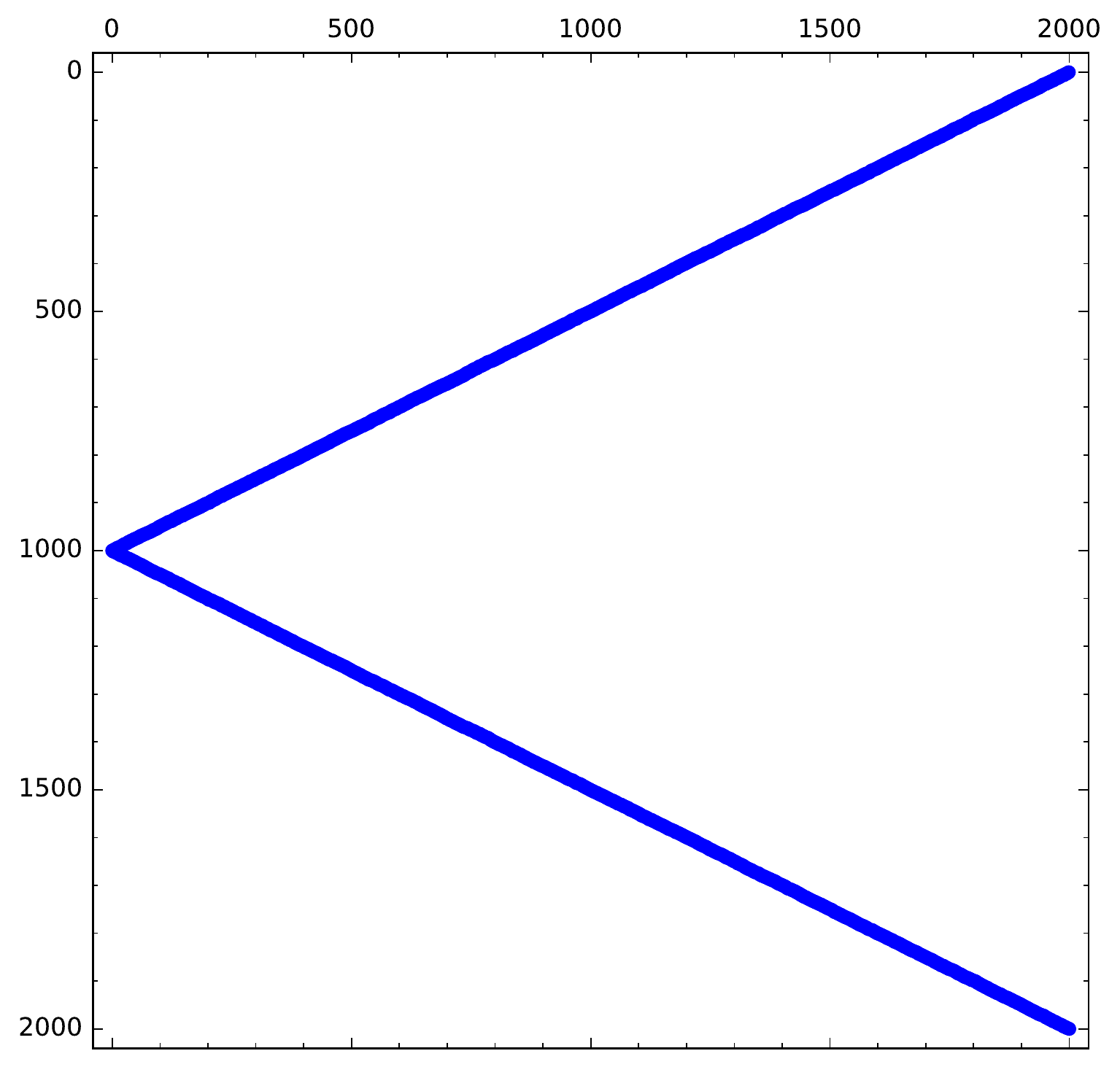}
		\quad
		\includegraphics[width=.305\textwidth]{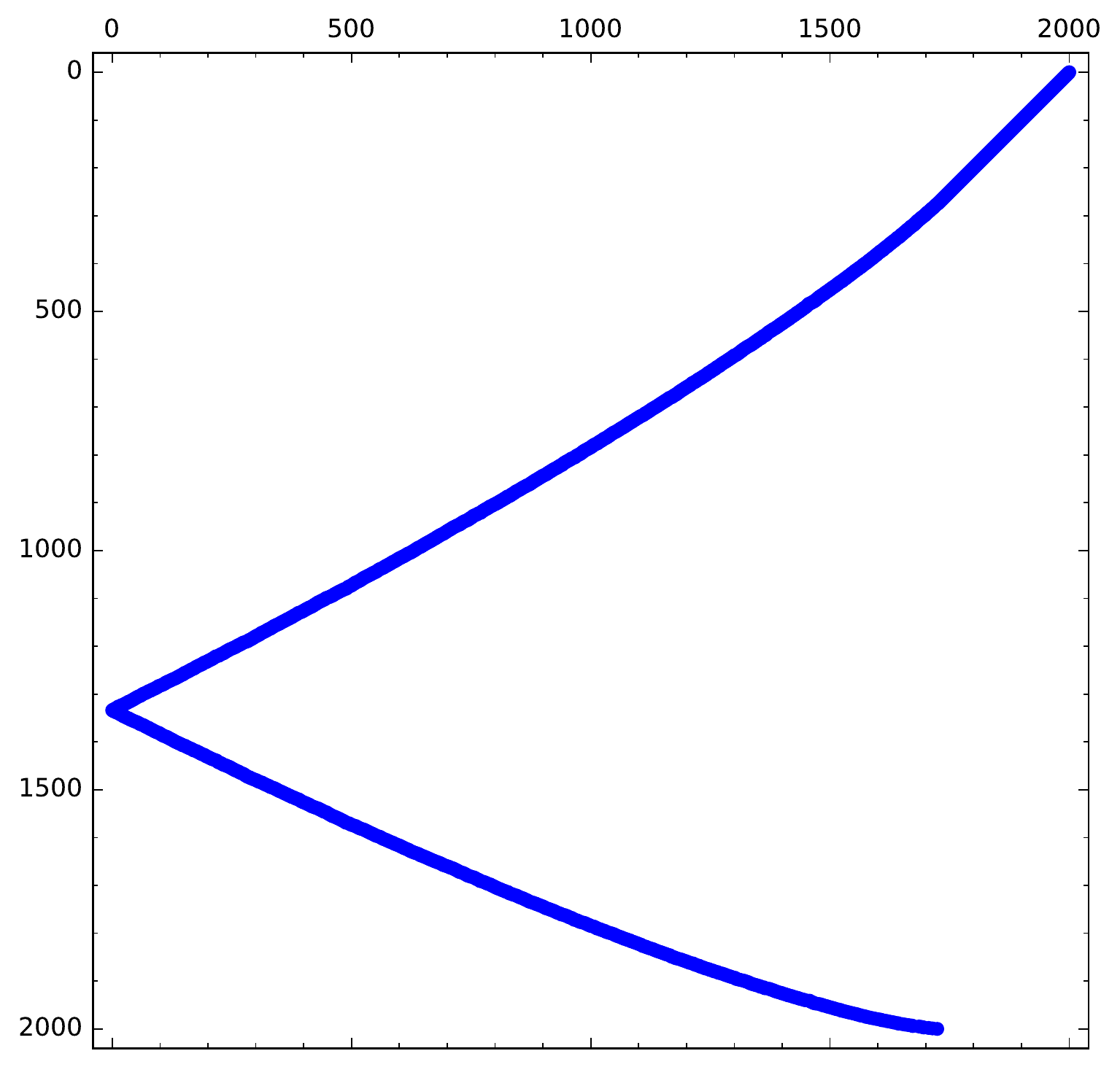}
	\end{center}
	\caption{The intermediate permutation matrices of a random 132-avoiding sorting network with 1000 elements at times $\alpha = \frac{1}{4}, \frac{1}{2}$ and $\frac{3}{4}$. Compare with \cref{transl_final} which contains the upper parts of the blue curves.}
	\label[fig]{matrix_sh}
\end{figure}

\section{Trajectories}\label{S:trajectories}
Next, inspired by the sine trajectories conjecture \cite[Conj.~1]{AHRV07} of Angel et al., we study trajectories in random 132-avoiding sorting networks.

The \emph{trajectory} of the element $i \in [n]$ in $w = w_1 \dots w_N$ is the function $k \mapsto \sigma_k^{-1}(i)$. See \cref{traj_ex}.
\begin{figure}[ht]
	\begin{center}
		\includegraphics[width=1.0\textwidth]{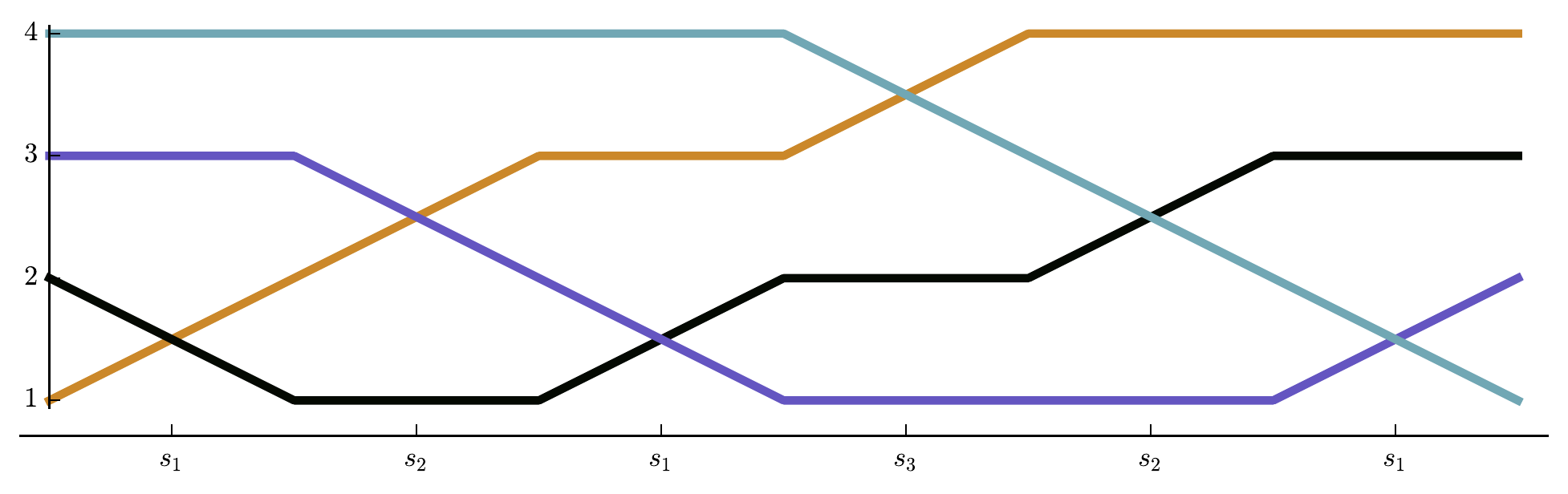}
	\end{center}
	\caption{Trajectories in the 132-avoiding sorting network 121321. The permutations $\sigma_k$ are $\sigma_0 = 1234$, $\sigma_1 = 2134$, $\sigma_2 = 2314$, $\sigma_3 = 3214$, $\sigma_4 = 3241$, $\sigma_5 = 3421$ and $\sigma_6 = 4 3 2 1$. Hence the trajectory of the element 3 is 3, 3, 2, 1, 1, 1, 2.}
	\label[fig]{traj_ex}
\end{figure}
The \emph{scaled trajectory} $f_i(\alpha) = f_{w,i}(\alpha)$ of $i$ in an $n$-element 132-avoiding sorting network $w$ is defined by \[f_i(\alpha) = \frac{\sigma^{-1}_{\alpha N}(i)}{n}\] for $\alpha N \in \Z$, and by linear interpolation for other $\alpha \in [0, 1]$. \cref{traj_sh} contains some examples.
\begin{figure}[b]
	\begin{center}
		\includegraphics[width=.65\textwidth]{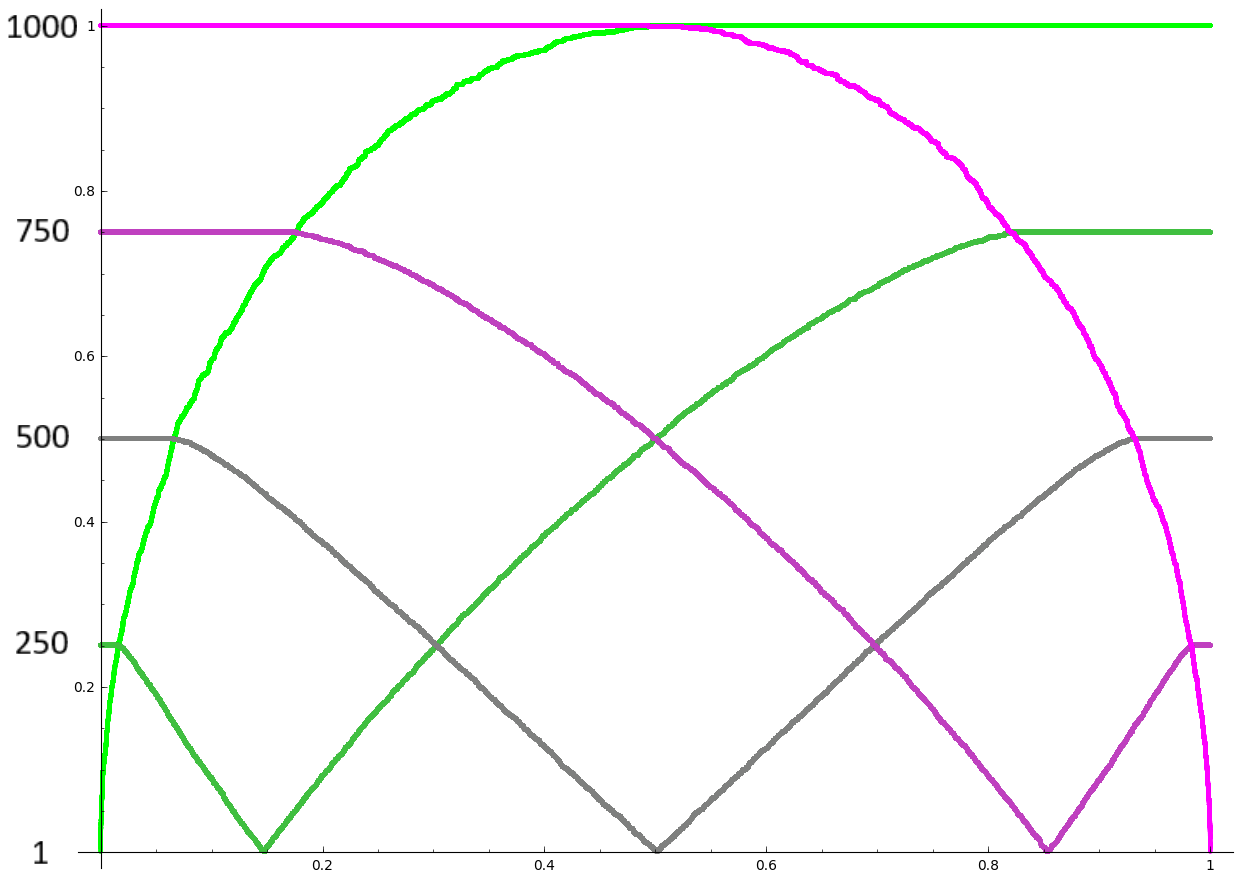}
	\end{center}
	\caption{The scaled trajectories of the elements 1, 250, 500, 750 and 1000 in a random 132-avoiding sorting network with 1000 elements.}
	\label[fig]{traj_sh}
\end{figure}

The heights at which different trajectories intersect are deterministic.
\begin{prop}
	In any 132-avoiding sorting network, $i$ and $j$ with $i < j$ are interchanged by $s_{j-i}$.
\end{prop}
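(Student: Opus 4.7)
Let the swap of $i$ and $j$ (with $i<j$) occur at step $k$, and write $p = w_k$, so that $\sigma_{k-1}(p)=i$ and $\sigma_{k-1}(p+1)=j$. The goal is to show $p = j-i$. The idea is to pin down exactly which elements occupy positions $1,\dots,p-1$ in $\sigma_{k-1}$: they should be precisely the $j-i-1$ elements in the interval $\{i+1,\dots,j-1\}$.

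For the bound $p \geq j-i$, I would use 132-avoidance of $\sigma_{k-1}$ directly. If some element $\ell$ with $i<\ell<j$ sat at a position $q>p+1$, then the positions $(p,p+1,q)$ would carry values $(i,j,\ell)$ whose relative order is $1\,3\,2$, i.e.\ a 132 pattern. Hence every element of $\{i+1,\dots,j-1\}$ must lie at a position $\leq p-1$, and so $p-1 \geq j-i-1$.

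For the reverse bound $p \leq j-i$, I need to rule out any element $\ell<i$ or $m>j$ at a position $q<p$. For $\ell<i$, I would pass to $\sigma_k$ (which is also 132-avoiding, since the sorting network is): positions $(q,p,p+1)$ now carry values $(\ell,j,i)$ of relative order $1\,3\,2$, again a 132 pattern, contradicting 132-avoidance of $\sigma_k$. For $m>j$, I stay in $\sigma_{k-1}$: positions $(q,p,p+1)$ carry $(m,i,j)$ of relative order $3\,1\,2$, which is a 312 pattern; since 132-avoiding sorting networks coincide with 312-avoiding ones by the preceding proposition, $\sigma_{k-1}$ is also 312-avoiding, contradiction.

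Combining the two directions forces $p-1 = j-i-1$, so $p = j-i$ and hence $i$ and $j$ are interchanged by $s_{j-i}$. The only mildly subtle point is the $m>j$ case: the natural 132-pattern is not available in either $\sigma_{k-1}$ or $\sigma_k$ (one obtains a 312 in $\sigma_{k-1}$ and a 321 in $\sigma_k$), so one genuinely needs to invoke the 132/312-avoidance equivalence for sorting networks to close the argument.
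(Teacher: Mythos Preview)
Your argument is correct and follows essentially the same strategy as the paper: determine which elements occupy positions $1,\dots,p-1$ at the moment of the $i$--$j$ swap by exhibiting forbidden patterns in the adjacent intermediate permutations. Your treatment of $\ell\in(i,j)$ (via a 132 in $\sigma_{k-1}$) and of $\ell<i$ (via a 132 in $\sigma_k$) matches the paper's two sentences exactly; the paper's ``similarly'' is precisely your passage to $\sigma_k$.

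Where you go beyond the paper is in explicitly disposing of the case $m>j$ sitting at a position $q<p$. The paper's short proof does not mention this case, and without it one only gets $j-i\le p\le n-i$. Your use of the 132/312 equivalence is a clean way to close this gap. It is worth noting, though, that one can also stay entirely within 132-avoidance: if such an $m$ exists, then the swap of $m$ and $j$ occurred before the $i$--$j$ swap, and immediately after it the values at the positions of $i$, $m$, $j$ (with $i$ still to the left of $j$) form a 132 pattern. So the 312 proposition is convenient but not strictly necessary, which may be what the paper's terse ``similarly'' leaves to the reader.
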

\begin{proof}\label[prop]{intersection_heights}
We show that all elements between $i$ and $j$ have to pass $i$, and that all elements smaller than $i$ have to pass both $i$ and $j$ before $i$ and $j$ can be swapped. Note that elements $k$ such that $i < k < j$ cannot be swapped with $j$ before they have been swapped with $i$. Otherwise an intermediate permutation would have the pattern 132. Similarly, all $k'$ with $k' < i$ have to pass $i$ and hence also $j$ before $i$ and $j$ can be swapped.
\end{proof}

In general the element $k$ starts to move when 1 reaches position $k$, that is, when the first row of the tableau $Q_w$ has length $k-1$. \cref{traj_sh} shows how the trajectories are constant until they are intersected by the trajectory of 1. This time is given by the limit shape and is $(1-\sqrt{1-x^2})/2$, where $x=k/n$. Hence we have the result below.

\begin{prop}
	For $n\in\N$ let $\mathcal{R}_n^{132}$ denote the set of $n$-element 132-avoiding sorting networks and $\P_n$ be the uniform probability measure on $\mathcal{R}_n^{132}$.
	Then for all $\epsilon > 0$,
	\[\lim_{n\rightarrow \infty} \P_n\Big(w \in \mathcal{R}^{132}_n : \sup_{0 \leq \alpha \leq 1} \big|f_{w, 1}(\alpha) - \mathfrak{t}_1(\alpha)\big| > \epsilon\Big) = 0,\] where 
	\[\mathfrak{t}_1(\alpha) = \begin{cases} 
	2\sqrt{\alpha-\alpha^2} & \text{if}\ 0 \leq \alpha \leq \frac{1}{2},  \\
	1 & \text{if}\ \frac{1}{2} < \alpha \leq 1.
	\end{cases}\]
\end{prop}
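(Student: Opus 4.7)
The plan is to combine the limit shape theorem \cref{limit_shape} with a simple combinatorial observation that the trajectory of element $1$ in a $132$-avoiding sorting network is entirely encoded in the first row of $Q_w^{\rightarrow}$. In any reduced word of $\wo$ element $1$ makes exactly $n-1$ right-moves, passing monotonically from position $1$ to position $n$; each such move is triggered by applying $s_j$ at a step when $1$ sits at position $j$. In a $132$-avoiding sorting network this move of $1$ coincides with the \emph{first} occurrence of the letter $j$ in $w$, which (since the first row is unshifted) is precisely $Q_w^{\rightarrow}(1,j)$. Hence, for every $k$,
\[\sigma_k^{-1}(1) = 1 + \#\big\{j \in [n-1] : Q_w^{\rightarrow}(1, j) \leq k\big\}.\]

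Next I would evaluate the limit surface $L$ along the upper-left boundary $x = 0$. In rotated coordinates $(x,y) = (0, y)$ corresponds to $(u, v) = (-y/\sqrt{2}, y/\sqrt{2})$, which lies on the diagonal $v = |u|$. The level curve $v = \tilde{g}_\alpha(u)$ meets this diagonal precisely at the edge of the bump, that is, when $|u| = \sqrt{2\alpha(1-\alpha)}$. Substituting $|u| = y/\sqrt{2}$ and taking the smaller root gives
\[L(0, y) = \frac{1 - \sqrt{1 - y^2}}{2} \qquad\text{for } y \in [0,1],\]
which matches the formula stated in the paragraph preceding the proposition. Inverting on $\alpha \in [0, 1/2]$ yields $y = 2\sqrt{\alpha(1-\alpha)} = \mathfrak{t}_1(\alpha)$.

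Applying \cref{limit_shape}, in particular the uniform bound \eqref{eq:shape}, with probability tending to $1$ the ratios $Q_w^{\rightarrow}(1, j)/N$ are simultaneously within $\epsilon$ of $L(0, j/n)$ for all $j$. Substituted into the combinatorial identity this gives, for $\alpha \leq 1/2$, that $\#\{j : Q_w^{\rightarrow}(1,j) \leq \alpha N\} / n$ tends to $|\{y \in [0,1] : L(0,y) \leq \alpha\}| = 2\sqrt{\alpha(1-\alpha)}$; for $\alpha > 1/2$ we have $L(0, y) \leq 1/2 \leq \alpha$ for every $y$, so all first-row entries fall below $\alpha N$ and $f_{w,1}(\alpha) \to 1$.

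Finally, upgrading pointwise convergence to the uniform bound over $\alpha \in [0,1]$ requested in the proposition is a standard Glivenko--Cantelli-type argument: $f_{w,1}$ is piecewise linear and non-decreasing with jumps of size $1/n$, while $\mathfrak{t}_1$ is continuous and non-decreasing on $[0,1]$, so $\sup_\alpha |f_{w,1}(\alpha) - \mathfrak{t}_1(\alpha)|$ is controlled by the uniform error $\epsilon$ in the jump locations plus the modulus of continuity of $\mathfrak{t}_1$ at scale $\epsilon$, plus $O(1/n)$. I expect the one subtle point to be the boundary evaluation of $L(0,y)$, since $\overline{L}$ is a priori specified by level curves on the interior of its domain; continuity of $(\alpha, u) \mapsto \tilde{g}_\alpha(u)$ and the explicit formula for $g_\alpha$ guarantee that the extension to the diagonal $v = |u|$ is well-defined and given by the closed form above.
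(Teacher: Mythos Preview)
Your proposal is correct and follows essentially the same approach as the paper: both identify $f_{w,1}(\alpha)$ with (a scaled version of) the length of the first row of $Q_w^{\rightarrow}$ at time $\alpha$, evaluate $L(0,y)=(1-\sqrt{1-y^2})/2$ along the top edge, apply \cref{limit_shape}, and then invert via the continuity and monotonicity of this function. Your write-up is in fact more detailed than the paper's proof---you spell out the combinatorial identity for $\sigma_k^{-1}(1)$, justify the boundary evaluation of $L$, and make the Glivenko--Cantelli step explicit---but the underlying argument is identical.
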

\begin{proof} By the Edelman--Greene bijection, $f_{w, 1}(\alpha)$ is the length of the first row of $Q_w$ at time $\alpha$ scaled by $1/n$. By \cref{limit_shape}, for every $\epsilon > 0$ \begin{equation*}
\lim_{n\to\infty}\P_n\left(T\in \T{n}:\max_{j\in[n-1]}\abs{\frac{T(1,j)}{N}-\frac{1-\sqrt{1-(j/n)^2}}{2}}>\epsilon\right)=0.
\end{equation*} The function \[\frac{1-\sqrt{1-x^2}}{2}\] is continuous and strictly increasing. Hence its inverse is also continuous, so for every $\epsilon > 0$ there exists a $\delta > 0$ such that \[\abs{\frac{T(1, j)}{N}-\frac{1-\sqrt{1-(j/n)^2}}{2}} < \delta  \Rightarrow \abs{\frac{j}{n} - 2\sqrt{\frac{T(1, j)}{N}-\biggl(\frac{T(1, j)}{N}\biggr)^2}} < \epsilon.\] This proves the claim with $f_{w, 1}(\alpha) = j/n$ and $\alpha = T(1, j)/N$.
\end{proof}
By symmetry, the trajectory of $n$ is given by the transformation $\alpha \mapsto 1 - \alpha$.
In general, the trajectories are given by the limit shape. The observation below states that we can read the steps at which the element $m$ switches position and hence the position of $m$ at a given step by reading the labels of $Q_w$ first along the anti-diagonal $i + j - 1 = m-1$ and then along the column $i = m$. 
\begin{lem}\label{traj_general}
The trajectory of an element $m \in [n]$ in any $n$-element 132-avoiding sorting network $w$ is determined by $Q_w$ along the anti-diagonal $i + j - 1 = m-1$ and the row $i = m$. Namely, let \[D_m = \big\{(i, j) \in \Delta_n : i + j - 1 = m-1\ \text{or}\ i = m\big\}.\] Then \[\sigma_k^{-1}(m) = \begin{cases}m,\ &\text{if}\ k < Q_w(1, m),\\ \pr_2(Q_w^{-1}(\max\{k' \leq k:  Q_w^{-1}(k') \in D_m\})),\ &\text{otherwise},\end{cases}\] where $\pr_2(i, j) = j$.
\end{lem}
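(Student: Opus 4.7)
The plan is to extract element $m$'s trajectory directly from $Q_w$ in two stages: first identify which time steps $k$ correspond to swaps that actually involve element $m$, then compute $m$'s position after each such swap.

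For the first stage I would combine the definition of $Q_w$ with \cref{intersection_heights}. By \cref{intersection_heights} every pair of elements $\{a,a+\ell\}$ is interchanged by the unique transposition $s_\ell$ at some time step, and 132-avoidance prevents the $n-\ell$ copies of $s_\ell$ appearing in $w$ from processing the candidate pairs in any non-monotone order. Hence the $i$-th occurrence of letter $\ell$ in $w$ must be the swap of $\{i,i+\ell\}$. Since the cell $(i,j)$ of $Q_w$ records the time of the $i$-th occurrence of letter $j$, that cell encodes precisely the swap of elements $i$ and $i+j$. Element $m$ therefore moves at time $k$ if and only if $Q_w^{-1}(k)\in D_m$: the anti-diagonal part $i+j=m$ collects the leftward swaps of $m$ (with the smaller partner $i$), and the row part $i=m$ collects the rightward swaps of $m$ (with the larger partner $m+j$).

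For the second stage, because the position of $m$ is constant between consecutive swaps in $D_m$, we have $\sigma_k^{-1}(m)=\sigma_{k^*}^{-1}(m)$ for $k^*=\max\{k'\leq k:Q_w^{-1}(k')\in D_m\}$, and $\sigma_k^{-1}(m)=m$ when no such $k^*$ exists. The cut-off in the first case of the lemma is the smallest entry of $Q_w$ on $D_m$; by the SYT property this minimum is realised at the top-left anti-diagonal cell $(1,m-1)$. To identify the post-swap position with $\pr_2(Q_w^{-1}(k^*))=j$, I would argue inductively that whenever the swap $s_j$ is applied in a 132-avoiding sorting network, the two elements being transposed occupy positions $j$ and $j+1$ with the smaller on the left; any other arrangement would create an intermediate $132$-pattern and contradict the hypothesis. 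After the swap the larger element is then at position $j$ and the smaller at $j+1$, so for an anti-diagonal cell (where $m$ plays the role of the larger element $i+j$) the position of $m$ becomes exactly $j=\pr_2(Q_w^{-1}(k^*))$.

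The main obstacle is this last structural step: controlling the pre-swap positions of the two transposed elements via 132-avoidance. It is here that the hypothesis is genuinely used, through a joint induction on the time step that tracks both the evolving word and the intermediate permutation. Once that is in place, the rest is bookkeeping --- enumerate the cells of $D_m$ in time order, track $m$'s position through each move, and observe that the column index of the current cell reads off the position of $m$ after the swap, yielding the formula in the lemma.
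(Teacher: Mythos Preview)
Your route differs from the paper's. The paper argues the anti-diagonal half via \cref{diag} (the shape of $Q_{w_1\cdots w_k}$ equals the Rothe diagram $D(\sigma_k)$) together with the observation that the smaller elements $1,2,\ldots,m-1$ pass $m$ in that order, and then obtains the row-$m$ half for free from the time-reversal symmetry of \cref{symmetry}. You instead identify directly, via \cref{intersection_heights}, that cell $(i,j)$ of $Q_w$ records the swap of the pair $\{i,i+j\}$; this avoids Rothe diagrams entirely but obliges you to treat both halves by hand.

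Two concrete issues. First, the minimum of $Q_w$ on $D_m$ lying at $(1,m-1)$ is \emph{not} a consequence of the SYT property: the anti-diagonal cells $i+j=m$ are pairwise incomparable in the Young order, so nothing in the tableau axioms forces $Q_w(1,m-1)$ to be smallest among them. What actually gives $Q_w(1,m-1)<Q_w(2,m-2)<\cdots<Q_w(m-1,1)$ is your own stage-one ordering claim (equivalently, the paper's ``$m'+1$ cannot pass $m$ before $m'$ has''), i.e.\ 132-avoidance; only the last link $Q_w(m-1,1)<Q_w(m,1)$ is an SYT inequality. Second, you verify $\sigma_k^{-1}(m)=\pr_2(Q_w^{-1}(k^*))$ only for anti-diagonal cells. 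The ``smaller on the left before the swap'' fact you invoke holds for \emph{every} reduced word, so no 132-induction is needed there; but applying it to a row cell $(m,j)$ places $m$ at position $j+1$ after the swap, not $j$. Hence your ``the rest is bookkeeping'' does not close on the row half, which is exactly why the paper appeals to \cref{symmetry} instead of repeating the direct computation. (Carrying your computation through in fact reveals that the printed formula is off by one on the row half, and that the cut-off $Q_w(1,m)$ should read $Q_w(1,m-1)$.)
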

\begin{proof}
This follows from \cref{diag} by the observation that the element $m$ can only switch places with elements $m' < m$ until $m$ reaches the first column, and that $m' + 1$ cannot pass $m$ before $m'$ has, that is, the row $m'$ has reached length $m - m'$. This proves the part concerning the anti-diagonal $i+j-1 = m-1$. The remaining part follows by the symmetry \cref{symmetry}.
\end{proof}
\begin{thm}\label[thm]{scaled_trajectory}
Fix $m/n = \beta$. Let \[D_{\beta} = \big\{(x, y) \in \R^2 : 0 \leq x \leq y \leq 1, y = \beta\ \text{or}\ x = \beta\big\}.\] Define $\mathfrak{f}_{\beta}(\alpha) = y - x$, where $L^{-1}(\alpha) = (x, y) \in D_{\beta}$, and \[\mathfrak{t}_{\beta}(\alpha) = \begin{cases} 
	\beta & \text{if}\ 0 \leq \alpha \leq \frac{1-\sqrt{1-\beta^2}}{2}, \\
	\mathfrak{f}_{\beta}(\alpha) & \text{if}\ \frac{1-\sqrt{1-\beta^2}}{2} < \alpha < \frac{1+\sqrt{2\beta-\beta^2}}{2},\\
	1 - \beta & \text{if}\ \frac{1+\sqrt{2\beta-\beta^2}}{2} \leq \alpha \leq 1.
	\end{cases}\]
Finally, let $\mathcal{R}^{132}_n$ denote the set of $n$-element 132-avoiding sorting networks and let $\P_n$ be the uniform probability measure on $\mathcal{R}^{132}_n$.
	Then for all $\epsilon > 0$,
	\[\lim_{n\rightarrow \infty} \P_n\Big(w \in \mathcal{R}^{132}_n : \sup_{0 \leq \alpha \leq 1} \big|f_{w, \lfloor \beta n \rfloor}(\alpha) - \mathfrak{t}_{\beta}(\alpha)\big| > \epsilon\Big) = 0.\]
\end{thm}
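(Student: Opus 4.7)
The plan is to combine \cref{traj_general} and \cref{limit_shape} applied to the shifted tableau $Q_w^{\rightarrow}$, splitting $[0,1]$ at the two critical times. Under the row-shifting bijection between $Q_w$ and $Q_w^{\rightarrow}$, the non-shifted L-shape $D_m$ (the anti-diagonal $i+j=m$ together with the row $i=m$) corresponds to the $(m-1)$-th column of $Q_w^{\rightarrow}$ together with its $m$-th row. In scaled shifted coordinates $(x,y)=(i/n,j'/n)$ this L-shape converges to $D_\beta$, and the non-shifted column index $j = j'-i+1$ reported by $\pr_2$ scales to $y-x$. Thus \cref{traj_general} says that, up to an $o(1)$ error, $\sigma_{\lfloor \alpha N\rfloor}^{-1}(m)/n$ equals $y^{\star}-x^{\star}$, where $(x^{\star},y^{\star})$ is the point of $D_\beta$ at which the restriction of $Q_w^{\rightarrow}/N$ to this L-shape attains its largest value not exceeding $\alpha$.

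For the first phase $\alpha \leq (1-\sqrt{1-\beta^2})/2 - \eta$, \cref{traj_general} forces $\sigma_k^{-1}(m) = m$ whenever $k$ is less than the first-row entry of $Q_w$ in column $m$ (up to an off-by-one that is irrelevant in the limit), and this entry divided by $N$ converges in probability to $L(0,\beta) = (1-\sqrt{1-\beta^2})/2$ by \cref{limit_shape}; the value $L(0,\beta)$ is read off from the preceding proposition on $\mathfrak{t}_1$, since row~$1$ of the shifted tableau is unshifted and its scaled length at time $\alpha$ converges to $2\sqrt{\alpha(1-\alpha)}$. The third phase $\alpha \geq (1+\sqrt{2\beta-\beta^2})/2 + \eta$ is reduced to the first by \cref{symmetry}: reversing $w$ gives another 132-avoiding sorting network in which element $m$ is replaced by $n+1-m$ with time reversed, and applying the first-phase analysis to $n+1-m \approx n(1-\beta)$ pins element $m$ of the forward network to its terminal position, with scaled value tending to $1-\beta$.

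The middle phase is the heart of the proof. For $\alpha$ strictly between the two critical times, there is a unique $(x_\alpha,y_\alpha)\in D_\beta$ with $L(x_\alpha,y_\alpha) = \alpha$, since $L$ is continuous and strictly increasing along each leg of $D_\beta$ (entries of any SYT strictly increase along rows and columns, and the explicit formula for $g_\alpha$ yields the quantitative strictness). Condition on the uniform $\delta$-closeness event from \cref{limit_shape}; then the maximiser described in \cref{traj_general} is realised by a shifted cell in $D_\beta$ whose scaled $L$-value is within $O(\delta)$ of $\alpha$, and by uniform continuity of $L^{-1}|_{D_\beta}$ on any $\alpha$-interval bounded away from the critical endpoints (as ensured by $\eta>0$), its $(y-x)$-coordinate is within $o(1)$ of $y_\alpha - x_\alpha = \mathfrak{f}_\beta(\alpha)$; uniformity over $\alpha$ then follows from compactness. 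The main obstacle is the quantitative modulus of continuity of $L^{-1}|_{D_\beta}$, which can be packaged by invoking \cref{analysis}~\eqref{ana2}. Matching with the boundary-phase constants is then automatic by direct substitution, since $\mathfrak{f}_\beta((1-\sqrt{1-\beta^2})/2) = \beta$ and $\mathfrak{f}_\beta((1+\sqrt{2\beta-\beta^2})/2) = 1-\beta$, and finally letting $\eta \to 0$ yields the stated uniform convergence in probability.
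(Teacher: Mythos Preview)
Your proposal is correct and follows essentially the same approach as the paper: combine \cref{traj_general} with \cref{limit_shape}, using that $L$ restricted to $D_\beta$ is continuous and strictly increasing so that its inverse is well-defined and continuous. The paper's proof is a two-sentence sketch of precisely this argument; you have simply unpacked the details, including the three-phase split, the explicit use of \cref{symmetry} for the terminal phase, and the $\eta$-margin argument to handle uniformity near the critical times. One minor remark: invoking \cref{analysis}~\eqref{ana2} for the modulus of continuity of $L^{-1}|_{D_\beta}$ is heavier than necessary; since $L|_{D_\beta}$ is a continuous bijection from a compact interval, its inverse is automatically uniformly continuous, which is all that is needed here and is what the paper implicitly uses.
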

\begin{proof}
This is simply a shifted and scaled version of \cref{traj_general} together with \cref{limit_shape}. Note that $L(x, y)$ restricted to $D_{\beta}$ is continuous by the continuity of $L(x, y)$ and also strictly increasing since $D_{\beta}$ intersects each level curve exactly once. 
\end{proof}
Informally, we can trace the trajectory of $\lfloor \beta n \rfloor$ by following the limit shape along $y = \beta$ until $x = y$, and then along $x = \beta$. If the height $\alpha = L(x', y')$ is given by some point $(x', y')$ along this curve, then the trajectory of $\lfloor \beta n\rfloor$ is at height $y'-x'$ at time $\alpha$. \cref{traj_shape} illustrates this. This combined with the implicit definition of $L(x, y)$ means that it is difficult to compute the trajectories of arbitrary elements explicitly.
\begin{figure}[t]
	\begin{center}
\includegraphics[width=.47\textwidth]{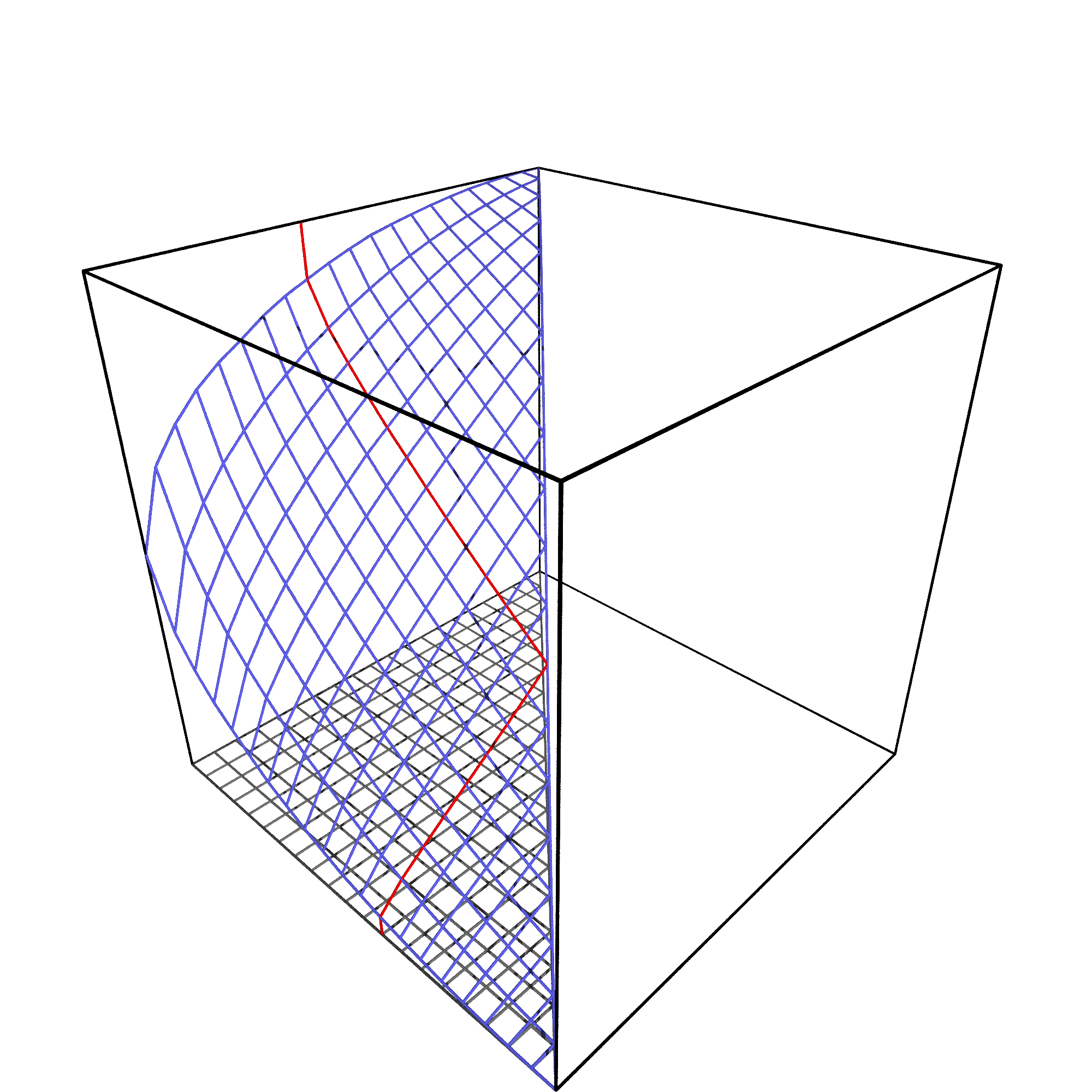}
\quad
\includegraphics[width=.4\textwidth]{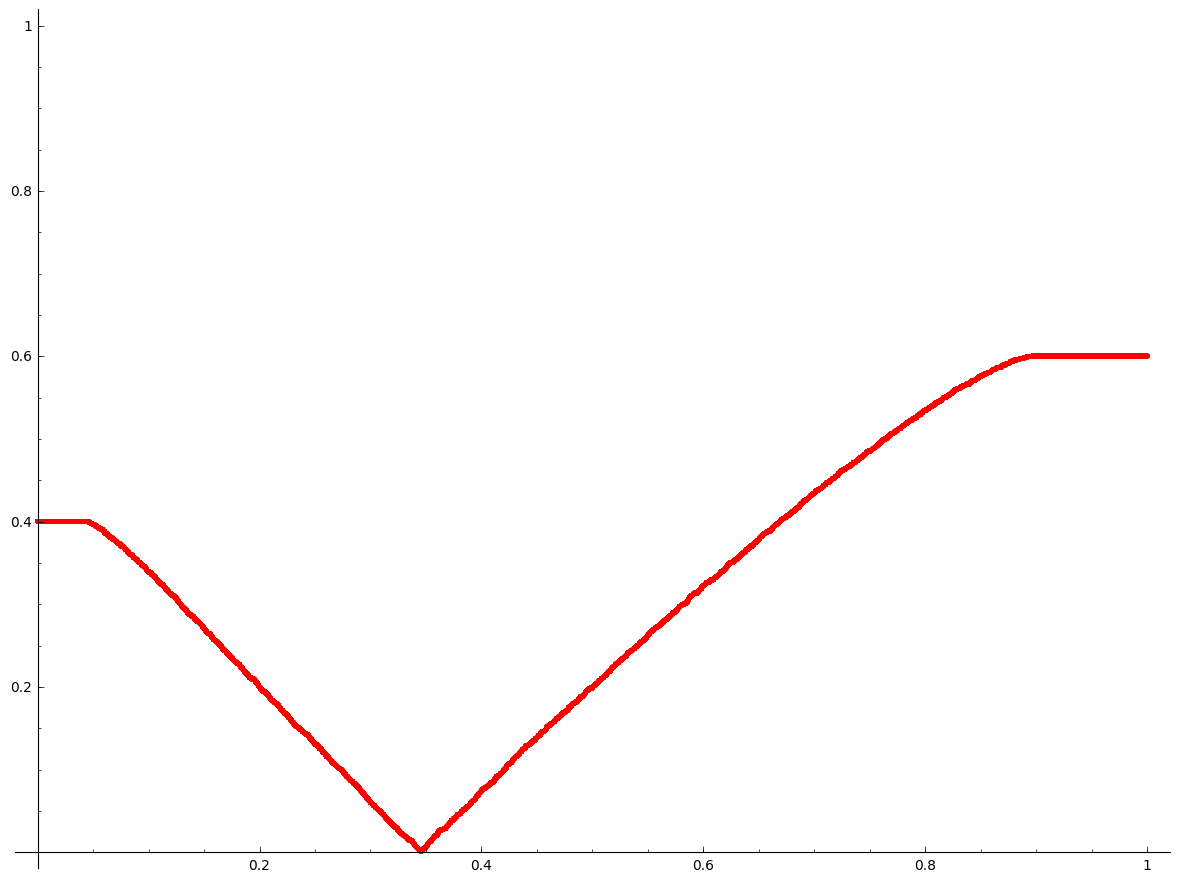}
\end{center}
	\caption{A curve (in red) determining the trajectory shown on the right from the limit shape.}
	\label[fig]{traj_shape}
\end{figure}

\section{Adjacencies}\label{S:adj}

Motivated by the great circle conjecture~\cite[Conj.~3]{AHRV07} and trying to understand the geometry of random 132-avoiding sorting networks on the permutahedron, we next study adjacencies. 

Let $w$ be a reduced word of the longest element in $\S_n$. An index $k \in [N-1]$ is called an \emph{adjacency} of $w$ if $\abs{w_{k+1}-w_k}=1$. In the permutahedron, an adjacency corresponds to a pair of adjacent edges of length $\sqrt{2}$ with direction vectors of the form $(0, \dots, 0, 1, 0, \dots, 0, -1, 0, \dots, 0)$ where the indices of either the 1s or the $-1$s coincide. Hence their scalar product is 1 and the edges constitute an angle of $\frac{\pi}{3}$. In the case of $\abs{w_{k+1}-w_k} > 1$ the edges corresponding to $w_k$ and $w_{k+1}$ are orthogonal. Adjacencies in a 132-avoiding sorting network $w$ correspond directly to adjacencies in the SYT $Q_w^{\rightarrow}$ as follows.

Let $\yd{\lambda}$ be a Young diagram.
A pair $(\tab,u)$ of a cell $u\in\yd{\lambda}$ and a standard tableaux $\tab$ of shape $\lambda$ is called a \emph{horizontal adjacency} if $\tab(\e u)=\tab(u)+1$.
The pair $(\tab,u)$ is called a \emph{vertical adjacency} if $\tab(\s u)=\tab(u)+1$.
An adjacency $(\tab,u)$ is said to lie in column $j$ of $\yd{\lambda}$ if $u=(i,j)$. Likewise, in such case $(\tab, u)$ is said to lie in row $i$ of $\yd{\lambda}$. The definitions are the same for shifted diagrams $\lambda^{\sh}$. For example, consider the tableau $\tab$ in \cref{tau}.
%
%
Then $(\tab, (1, 1))$, $(\tab, (1, 2))$ and $(\tab, (2, 2))$ are horizontal adjacencies whereas $(\tab, (2, 3))$ is a vertical adjacency. The adjacencies lie in columns 1, 2, 2, and 3 (and rows 1, 1, 2 and 2), respectively.

\begin{figure}[t]
\centering
\begin{tikzpicture}
\begin{scope}
\draw[thinLine](2,1)--(3,1)(1,2)--(3,2)(1,2)--(1,3)(2,1)--(2,3);
\draw[thickLine](0,2)--(1,2)--(1,1)--(2,1)--(2,0)--(3,0)--(3,3)--(0,3)--cycle;
\draw[entry](0,2)node{$1$}(1,2)node{$2$}(2,2)node{$3$}(1,1)node{$4$}(2,1)node{$5$}(2,0)node{$6$};
\draw(3.2,1.5)node[anchor=west]{$\tab=\tau_4(\tab)$};
\end{scope}
\begin{scope}[xshift=10cm]
\fill[fillGrey](1,1)rectangle(2,2)(2,2)rectangle(3,3);
\draw[thinLine](2,1)--(3,1)(1,2)--(3,2)(1,2)--(1,3)(2,1)--(2,3);
\draw[thickLine](0,2)--(1,2)--(1,1)--(2,1)--(2,0)--(3,0)--(3,3)--(0,3)--cycle;
\draw[entry](0,2)node{$1$}(1,2)node{$2$}(2,2)node{$4$}(1,1)node{$3$}(2,1)node{$5$}(2,0)node{$6$};
\draw(3.2,1.5)node[anchor=west]{$\tau_3(\tab)$};
\end{scope}
\end{tikzpicture}
\caption{The three shifted SYT $\tab,\tau_3(\tab)$ and $\tau_4(\tab)$.}
\label[fig]{tau}
\end{figure}
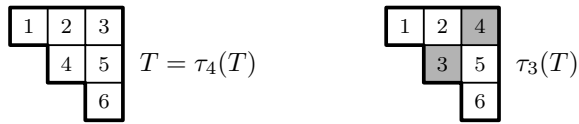

\begin{prop}
Let $w$ be a $132$-avoiding sorting network.
Then $(w_k,w_{k+1})=(j,j+1)$ if and only if $(Q_w^{\rightarrow},(Q_w^{\rightarrow})^{-1}(k))$ is a horizontal adjacency.
Similarly $(w_k,w_{k+1})=(j+1,j)$ if and only if $(Q_w^{\rightarrow},(Q_w^{\rightarrow})^{-1}(k))$ is a vertical adjacency.
\end{prop}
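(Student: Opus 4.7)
The plan is to argue purely from the combinatorial properties of $T := Q_w^{\rightarrow}$, namely that $T$ is a shifted standard Young tableau of shape $\shyd{\Delta_n}$ by the Fishel--Nelson theorem cited above, together with the positional formula implicit in the definition of $Q_w$ followed by the row-shift. Writing $(i,c)$ for the cell of $T$ containing entry $k$, one reads off $w_k = c - i + 1$: in $Q_w$ the entry $k$ sits in column $w_k$, and shifting row $i$ rightward by $i-1$ gives the stated formula.

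The easy direction is immediate. If $k$ sits at $(i_1,c_1)$ and $k+1$ at $(i_2,c_2)$, then
\begin{equation*}
w_{k+1}-w_k \;=\; (c_2-c_1)-(i_2-i_1).
\end{equation*}
Hence a horizontal adjacency $(i_2,c_2)=(i_1,c_1+1)$ gives $w_{k+1}-w_k = 1$, i.e.\ $(w_k,w_{k+1})=(j,j+1)$ with $j=w_k$, and a vertical adjacency $(i_2,c_2)=(i_1+1,c_1)$ gives $w_{k+1}-w_k=-1$.

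For the converse I would assume $w_{k+1}=w_k+1$, so that $c_2-c_1 = (i_2-i_1)+1$, and rule out every value of $d:=i_2-i_1$ other than $0$. If $d<0$, then $c_2\leq c_1$, so the cell $(i_2,c_2)$ is weakly northwest of $(i_1,c_1)$; strict row- and column-increase on $T$ then force $T(i_2,c_2)\leq T(i_1,c_1)=k$, contradicting $k+1>k$. If $d\geq 1$, consider the auxiliary cell $(i_1,c_2)$: it lies in $\shyd{\Delta_n}$ because $(i_2,c_2)=(i_1+d,c_2)$ already does (so $c_2\leq n-1$) and because $c_2>c_1\geq i_1$. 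Row-strictness along row $i_1$ gives $T(i_1,c_2)>k$, while column-strictness along column $c_2$ gives $T(i_1,c_2)<T(i_2,c_2)=k+1$, so $T(i_1,c_2)$ would have to strictly lie between $k$ and $k+1$, which is impossible. Hence $d=0$ and the adjacency is horizontal. The case $w_{k+1}=w_k-1$ is entirely analogous, with the target value $d=1$: $d\leq 0$ is ruled out by the same northwest argument, and $d\geq 2$ again produces the impossible intermediate value at $(i_1,c_2)$.

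The main point requiring attention is the verification that the auxiliary cell $(i_1,c_2)$ actually lies in $\shyd{\Delta_n}$; this is where the staircase shape enters essentially, via the fact that $(i_2,c_2)$ is in the shape (forcing $c_2\leq n-1$) together with the inequality $c_1\geq i_1$ (which merely says that $(i_1,c_1)$ is in a shifted diagram). Beyond this routine bookkeeping, the argument relies only on the strict monotonicity of $T$ along rows and columns, so $132$-avoidance is used only implicitly, through its role in guaranteeing that $Q_w^{\rightarrow}$ is a shifted SYT of staircase shape.
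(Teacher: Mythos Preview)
Your proof is correct and follows essentially the same approach as the paper: both rest on the observation that entry $k$ lies in column $w_k$ of $Q_w$ (equivalently, $w_k=c-i+1$ for the cell $(i,c)$ of $Q_w^{\rightarrow}$ containing $k$), together with the row/column strictness of the shifted SYT. The paper's proof is a one-line sketch invoking these two facts; you have simply written out the routine squeeze argument with the auxiliary cell $(i_1,c_2)$ that the paper leaves to the reader.
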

\begin{proof}
This follows from the fact that $w_i$ is inserted in column $w_i$ of $Q_w$ in the Edelman--Greene bijection, and that $Q_w$ can be shifted. 
\end{proof}

Our next goal is to enumerate adjacencies in Young tableaux.
To this end define \[\tau_{k}(\tab) = \begin{cases}
\tab &\quad \textrm{if}\ (\tab, \tab^{-1}(k))\ \textrm{is an adjacency}, \\
s_k \circ \tab &\quad \textrm{otherwise}.
\end{cases}\] In other words, $\tau_k(\tab)$ exchanges the positions of $k$ and $k+1$ if possible, that is, unless they are adjacent in $\tab$.
See \cref{tau}.

We consider the bijection $\partial_k = \tau_{|\lambda|-1} \circ \dots \circ \tau_{k+1} \circ \tau_{k}$ from the set of (possibly shifted) SYT of shape $\lambda$ to itself, where $|\lambda|$ is the largest entry in an SYT of shape $\lambda$. Applying $\partial_k$ to $\tab$ can be described by the following procedure called \emph{partial promotion}: \begin{enumerate} \item Form the sequence of cells $u_0, \dots, u_m$, called the \emph{promotion path}, such that \begin{itemize}
	\item $u_0 = \tab^{-1}(k)$,
	\item $u_{l+1} = \mathrm{arg} \min \{\tab(\e u_l), \tab(\s u_l)\}$. If at some $l = m$ both are undefined, we stop.
\end{itemize} \item Remove the label of $u_0$ and slide the entries $\tab(u_0) \leftarrow \tab(u_1) \leftarrow \dots \leftarrow \tab(u_{m})$. \item Subtract 1 from each label at least $k+1$ and insert $|\lambda|$ into $u_{m}$.
\end{enumerate} The inverse can be described in a similar way: 
\begin{enumerate} \item Form the sequence of cells $u_m, \dots, u_0$, called the \emph{inverse promotion path}, such that \begin{itemize}
	\item $u_m = \tab^{-1}(|\lambda|)$,
	\item $u_{l+1} = \mathrm{arg} \min \{\tab(\w u_l), \tab(\n u_l)\}$,
	\item and $u_0$ is the last possible cell in the sequence such that $\tab(u_0) \geq k$.
\end{itemize} \item Remove the label of $u_m$ and slide the entries $\tab(u_{0}) \rightarrow \dots \rightarrow \tab(u_{m})$. \item Add 1 to each label at least $k$. Insert $k$ into $u_0$.
\end{enumerate}

\begin{figure}[t]
\centering
\begin{tikzpicture}
\begin{scope}
\draw[xshift=46mm,yshift=15mm,thickLine,->](0,0)--node[anchor=south]{$\partial_2$}(1,0);
\begin{scope}
\fill[fillGrey](1,4)--(1,2)--(3,2)--(3,0)--(4,0)--(4,3)--(2,3)--(2,4)--cycle;
\draw[thinLine](3,1)--(4,1)(2,2)--(4,2)(1,3)--(4,3)(1,3)--(1,4)(2,2)--(2,4)(3,1)--(3,4);
\draw[thickLine](0,3)--(1,3)--(1,2)--(2,2)--(2,1)--(3,1)--(3,0)--(4,0)--(4,4)--(0,4)--cycle;
\draw[entry](0,3)node{$1$}(1,3)node{$2$}(2,3)node{$4$}(3,3)node{$5$}(1,2)node{$3$}(2,2)node{$6$}(3,2)node{$7$}(2,1)node{$8$}(3,1)node{$9$}(3,0)node{$10$};
\end{scope}
\begin{scope}[xshift=6cm]
\draw[thinLine](3,1)--(4,1)(2,2)--(4,2)(1,3)--(4,3)(1,3)--(1,4)(2,2)--(2,4)(3,1)--(3,4);
\draw[thickLine](0,3)--(1,3)--(1,2)--(2,2)--(2,1)--(3,1)--(3,0)--(4,0)--(4,4)--(0,4)--cycle;
\draw[entry](0,3)node{$1$}(1,3)node{$2$}(2,3)node{$3$}(3,3)node{$4$}(1,2)node{$5$}(2,2)node{$6$}(3,2)node{$8$}(2,1)node{$7$}(3,1)node{$9$}(3,0)node{$10$};
\end{scope}
\end{scope}
\begin{scope}[xshift=15cm]
\draw[xshift=46mm,yshift=15mm,thickLine,->](0,0)--node[anchor=south]{$\partial_2^{-1}$}(1,0);
\begin{scope}
\fill[fillGrey](1,4)--(1,2)--(3,2)--(3,0)--(4,0)--(4,3)--(2,3)--(2,4)--cycle;
\draw[thinLine](3,1)--(4,1)(2,2)--(4,2)(1,3)--(4,3)(1,3)--(1,4)(2,2)--(2,4)(3,1)--(3,4);
\draw[thickLine](0,3)--(1,3)--(1,2)--(2,2)--(2,1)--(3,1)--(3,0)--(4,0)--(4,4)--(0,4)--cycle;
\draw[entry](0,3)node{$1$}(1,3)node{$2$}(2,3)node{$3$}(3,3)node{$4$}(1,2)node{$5$}(2,2)node{$6$}(3,2)node{$8$}(2,1)node{$7$}(3,1)node{$9$}(3,0)node{$10$};
\end{scope}
\begin{scope}[xshift=6cm]
\draw[thinLine](3,1)--(4,1)(2,2)--(4,2)(1,3)--(4,3)(1,3)--(1,4)(2,2)--(2,4)(3,1)--(3,4);
\draw[thickLine](0,3)--(1,3)--(1,2)--(2,2)--(2,1)--(3,1)--(3,0)--(4,0)--(4,4)--(0,4)--cycle;
\draw[entry](0,3)node{$1$}(1,3)node{$2$}(2,3)node{$4$}(3,3)node{$5$}(1,2)node{$3$}(2,2)node{$6$}(3,2)node{$7$}(2,1)node{$8$}(3,1)node{$9$}(3,0)node{$10$};
\end{scope}
\end{scope}
\end{tikzpicture}
\caption{Partial promotion $\partial_2$ and its inverse, with the promotion paths highlighted.}
\label[fig]{promotion}
\end{figure}
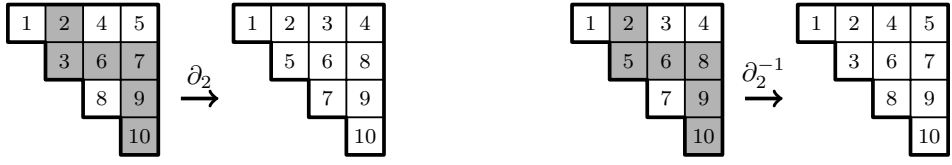

See \cref{promotion}.
In the case $k = 1$ partial promotion becomes Sch\"utzenberger's promotion \cite{Schu63}. See also \cite{St09}.

\begin{thm}\label[thm]{adjacencies}
Let $\lambda$ be a (possibly strict) partition.
Then the total number of horizontal adjacencies in column $c$ of (possibly shifted) SYT of shape $\lambda$ is equal to the number of (possibly shifted) SYT of shape $\lambda$ with largest entry in column $c+1$ or greater. 
\end{thm}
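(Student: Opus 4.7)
The plan is to set up an explicit bijection between pairs $(\tab,u_0)$ where $(\tab,u_0)$ is a horizontal adjacency of a (possibly shifted) SYT of shape $\lambda$ in column $c$ and (possibly shifted) SYT of shape $\lambda$ with largest entry in column $\geq c+1$. The engine is the partial promotion operator $\partial_k$ introduced right before the statement.

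First I would define the forward map. Given an adjacency $(\tab,u_0)$ with $u_0$ in column $c$ and $k=\tab(u_0)$, send it to $\tab' := \partial_k(\tab)$. Because $\tab(\e u_0)=k+1 < \tab(\s u_0)$, the promotion path starts with an east step from column $c$ into column $c+1$, and every subsequent step is east or south, so the terminal cell where $|\lambda|$ is placed lies in column $\geq c+1$. This takes care of well-definedness.

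Next I would build the inverse. Given $\tab'$ with $|\lambda|$ at a cell $u_m$ in column $\geq c+1$, trace the unrestricted inverse promotion path $u_m, u_{m-1},\ldots$, at each step moving to whichever of $\w u_l,\n u_l$ carries the smaller $\tab'$-value. Entries strictly decrease along this path, and for both the rectangular and the shifted-staircase cases the path is forced to terminate at $(1,1)$; in particular it must enter column $c$. Since once inside column $c$ the path can only continue north or further west, there is a unique index $\ell$ with $u_{\ell+1}$ in column $c+1$ and $u_{\ell}$ in column $c$. I then set $k := \tab'(u_\ell)$ and define the preimage to be $\partial_k^{-1}(\tab')$. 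Unwinding the slide and the ``$+1$'' shift in step~(3) of the inverse partial promotion shows that in this preimage the cell $u_\ell$ receives the value $k$ while $u_{\ell+1}=\e u_\ell$ receives $k+1$, which is the required adjacency in column $c$.

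The main subtlety I expect is reconciling the geometric crossing used to pick $\ell$ with the algebraic stopping criterion built into $\partial_k^{-1}$ (``$u_0$ is the last cell in the path with $\tab'(u_0)\geq k$''). This is where the strict monotonicity of $\tab'$ along the inverse path is essential: with $k=\tab'(u_\ell)$, every later cell $u_{\ell-1},u_{\ell-2},\ldots$ of the unrestricted path automatically carries a value $<k$, so the algebraic stopping rule produces exactly the cell $u_\ell$ chosen geometrically. Once this matching is verified, the forward and inverse maps are mutually inverse by construction, because both operations record the unique step in the (inverse) promotion path that crosses between columns $c$ and $c+1$. The whole argument is identical for shifted and non-shifted shapes, so the theorem follows in both cases simultaneously.
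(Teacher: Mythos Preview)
Your proposal is correct and uses essentially the same bijection as the paper's proof: partial promotion $\partial_k$ sends an adjacency to a tableau with largest entry in column $\geq c+1$, and the inverse is picked out by the unique place where the inverse promotion path crosses from column $c+1$ into column $c$. The paper presents the same construction with the roles of ``forward'' and ``inverse'' interchanged, but the content is identical. One cosmetic remark: you write ``for both the rectangular and the shifted-staircase cases the path is forced to terminate at $(1,1)$'', but the theorem is stated for arbitrary (possibly strict) partitions; your argument needs no such restriction, since in any (shifted or non-shifted) Young diagram the full inverse promotion path must end at the unique cell with no north or west neighbour, namely $(1,1)$, and hence must enter column $c$.
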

\begin{proof}
We prove this by constructing a bijection $\phi : S_c(\lambda) \rightarrow A_c(\lambda)$ between the set $S_c(\lambda)$ of all (shifted) SYT of shape $\lambda$ with largest entry in column $k\geq c+1$,
and the set $A_c(\lambda)$ of horizontal adjacencies $(\tab,u)$ in column $c$ of a (shifted) SYT $\tab$ of shape $\lambda$.
Consider a tableau $\tab$ of shape $\lambda$ and assume the largest entry $|\lambda|$ is in column $c+1$ or greater. Then the inverse promotion path of $\tau_1 \circ \tau_2 \circ \dots \circ \tau_{|\lambda|-1}$ has to cross from column $c+1$ to $c$ at some unique $k(\tab)$, that is, $k(\tab)$ is the first index at which the path of $\partial_{k(\tab)}^{-1} = \tau_{k(\tab)} \circ \dots \circ \tau_{|\lambda|-1}$ ends in column $c$ of $\tab$. By the definition of partial inverse promotion, the choice of $k(\tab)$ ensures that $k(\tab)$ and $k(\tab)+1$ are horizontally adjacent with $k(\tab)$ in some cell $u$ in column $c$ of $\partial_{k(\tab)}^{-1}(\tab)$. This is illustrated in \cref{adj_bij}.
Hence, by letting $\phi(\tab) = (\partial_{k(\tab)}^{-1}(\tab), u)$ we obtain a map $\phi : S_c(\lambda) \rightarrow A_c(\lambda)$.

\begin{figure}[htbp!]
\centering
\begin{tikzpicture}[scale=1.2]
\begin{scope}
\fill[fillGrey](1,3)--(2,3)--(2,2)--(3,2)--(3,0)--(4,0)--(4,3)--(3,3)--(3,4)--(1,4)--cycle;
\draw[thinLine](3,1)--(4,1)(2,2)--(4,2)(1,3)--(4,3)(1,3)--(1,4)(2,2)--(2,4)(3,1)--(3,4);
\draw[thickLine](0,3)--(1,3)--(1,2)--(2,2)--(2,1)--(3,1)--(3,0)--(4,0)--(4,4)--(0,4)--cycle;
\draw[entry](1,3)node{$k$};
\end{scope}
\draw[thickLine,->,xshift=46mm,yshift=2cm](0,0)--(1,0);
\begin{scope}[xshift=6cm]
\draw[thinLine](3,1)--(4,1)(2,2)--(4,2)(1,3)--(4,3)(1,3)--(1,4)(2,2)--(2,4)(3,1)--(3,4);
\draw[thickLine](0,3)--(1,3)--(1,2)--(2,2)--(2,1)--(3,1)--(3,0)--(4,0)--(4,4)--(0,4)--cycle;
\draw[entry,font=\scriptsize](1,3)node{$k$}(2,3)node{$k$+$1$};
\end{scope}
\end{tikzpicture}
\caption{The bijection in the proof of \cref{adjacencies} with $k = k(\tab)$ and the path of $\tau_{k(\tab)} \circ \dots \circ \tau_{|\lambda|-1}$ coloured.}
\label[fig]{adj_bij}
\end{figure}
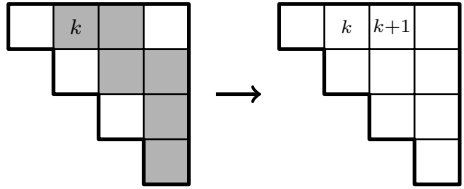

If $(S, u)$ is an adjacency in column $c$, the largest entry of $\partial_{S(u)}(S)$ has to be in column $c+1$ or greater as there is at least one entry in column $c+1$. Hence, by defining $\psi(S,u)=\partial_{S(u)}(S)$ we obtain a map $\psi:A_c(\lambda)\to S_c(\lambda)$.
Given the adjacency $(\partial_{k(\tab)}^{-1}(\tab), u)$ in column $c$, note that $\partial_{k(\tab)}^{-1}(\tab)(u) = k(\tab)$, so $\psi(\phi(\tab))= \partial_{k(\tab)}(\partial_{k(\tab)}^{-1}(\tab)) = \tab$. On the other hand, if $(S, u)$ is an adjacency in column $c$, then $\partial^{-1}_{S(u)}(\partial_{S(u)}(S)) = S$ so $k(\partial_{S(u)}(S)) = S(u)$. Hence, $\phi(\psi(S, u)) = \phi(\partial_{S(u)}(S)) = (\partial^{-1}_{S(u)}(\partial_{S(u)}(S)), u) = (S, u).$ Thus $\phi$ and $\psi$ are inverse bijections and the proof is complete. 
\end{proof}

Techniques similar to those used in the proof of \cref{adjacencies} also appear in \cite{STWW}.
\begin{cor}\label[cor]{expected_adjacencies}
The expected number of horizontal (resp.~vertical) adjacencies in column $c < n-1$ (resp.~row $r < n - 1$) of a uniformly random shifted staircase SYT is equal to 1.
\end{cor}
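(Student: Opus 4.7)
The plan is to deduce both claims from \cref{adjacencies} together with two structural observations about the shifted staircase.

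First I would observe that $\shyd{\Delta_n}$ has a \emph{unique} removable corner, namely $(n-1,n-1)$: a cell $(i,j)\in\shyd{\Delta_n}$ lacks an east neighbour precisely when $j=n-1$ and lacks a south neighbour precisely when $i=j$, so $(n-1,n-1)$ is the only candidate. Consequently the largest entry $\tab^{-1}(N)$ of every $\tab\in\T{n}$ sits in column $n-1$. For the horizontal statement, fix $c<n-1$; then $c+1\leq n-1$, so each tableau in $\T{n}$ contributes to the right-hand side of \cref{adjacencies}. That theorem then yields a total of $|\T{n}|$ horizontal adjacencies in column $c$ across $\T{n}$, so the expected number per uniformly random tableau is exactly $1$.

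For the vertical statement I would invoke the anti-diagonal involution $\iota\colon\T{n}\to\T{n}$ given by
\[
\iota(\tab)(i,j) \;=\; N+1-\tab(n-j,\,n-i),
\]
which is the tableau-side incarnation of the sorting-network reversal of \cref{symmetry}. A short unwinding of the definitions shows that $(\tab,(i,c))$ is a horizontal adjacency in column $c$ of $\tab$ if and only if $(\iota(\tab),(n-c-1,n-i))$ is a vertical adjacency in row $n-c-1$ of $\iota(\tab)$. Since $\iota$ is a bijection on $\T{n}$, summing over $\T{n}$ gives that the total number of vertical adjacencies in row $r$ equals the total number of horizontal adjacencies in column $n-r-1$. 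The constraint $r<n-1$ translates to $1\leq n-r-1\leq n-2$, precisely the range covered in the previous paragraph, so the total is again $|\T{n}|$ and the expected value is $1$.

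The only point requiring genuine verification is that $\iota$ preserves $\T{n}$ and takes horizontal adjacencies bijectively to vertical ones with the asserted row shift; both reduce to reading off definitions, so no substantive obstacle is anticipated.
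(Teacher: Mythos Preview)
Your proof is correct and matches the argument the paper leaves implicit: the horizontal case is exactly the specialisation of \cref{adjacencies} to the shifted staircase, whose unique removable corner forces the largest entry into column $n-1$, and the vertical case then follows from the anti-diagonal involution of \cref{symmetry}. The paper gives no explicit proof of this corollary, but your write-up is precisely the intended deduction.
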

The previous corollary in turn leads to the following.
\begin{cor}
The expected number of adjacencies in a random 132-avoiding sorting network of length $N$ is $2(n-2)$.
\end{cor}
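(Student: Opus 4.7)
My plan is to translate the question entirely into a statement about shifted SYT of staircase shape and then invoke \cref{expected_adjacencies} separately for each column and each row. By the proposition immediately preceding \cref{adjacencies}, every index $k \in [N-1]$ with $|w_{k+1}-w_k|=1$ in a 132-avoiding sorting network $w$ corresponds either to a horizontal adjacency of $Q_w^{\rightarrow}$ (when $w_{k+1}=w_k+1$) or to a vertical adjacency of $Q_w^{\rightarrow}$ (when $w_{k+1}=w_k-1$), and these two cases are mutually exclusive. Since $w \mapsto Q_w^{\rightarrow}$ is a bijection between the set of $n$-element 132-avoiding sorting networks and $\T{n}$, the uniform measures are identified and therefore the expected number of adjacencies in a uniformly random 132-avoiding sorting network equals the expected total number of horizontal and vertical adjacencies in a uniformly random $\tab \in \T{n}$.

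Next I would apply linearity of expectation and partition this total according to columns for horizontal adjacencies and rows for vertical adjacencies. A horizontal adjacency $(\tab,(i,c))$ in column $c$ of $\shyd{\Delta_n}$ requires $\e(i,c)=(i,c+1) \in \shyd{\Delta_n}$, which forces $c \le n-2$; analogously, only rows $r \le n-2$ can host vertical adjacencies. Hence the relevant columns and rows are precisely those with index less than $n-1$, and \cref{expected_adjacencies} provides expected value exactly $1$ for each of them.

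Summing the $n-2$ column contributions and the $n-2$ row contributions yields $2(n-2)$, as claimed. The substantive work was carried out in the proof of \cref{adjacencies} via the partial-promotion bijection, so I anticipate no real obstacle; what remains is simply the translation via the Edelman--Greene bijection together with the observation that boundary columns and rows contribute zero.
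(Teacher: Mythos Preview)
Your proposal is correct and follows exactly the route the paper intends: the corollary is stated immediately after \cref{expected_adjacencies} with the remark that ``the previous corollary in turn leads to the following,'' and your argument---translating adjacencies in $w$ to horizontal and vertical adjacencies in $Q_w^{\rightarrow}$ via the Edelman--Greene bijection, then summing the $n-2$ column contributions and $n-2$ row contributions of expected value $1$ each---is precisely this derivation made explicit.
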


Compare this with the result of Schilling et al.~below.
\begin{thm}[{\cite[Thm.~1.3]{STWW}}]
The expected number of $i\ i + 1\ i $, $1 \leq i \leq n-1$, in a random 132-avoiding sorting network of length $N$ is 1.
\end{thm}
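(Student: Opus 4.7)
The plan is to translate the Yang--Baxter count via the Edelman--Greene bijection into an enumeration of a specific three-cell pattern in shifted staircase SYT, and then mirror the promotion-based counting argument of \cref{adjacencies}. First I would verify that a triple $w_k w_{k+1} w_{k+2} = i(i+1)i$ in a 132-avoiding network $w$ corresponds to entries $k, k+1, k+2$ occupying cells $(r, j), (r, j+1), (r+1, j+1)$ of $Q_w^{\rightarrow}$ for some $r$ and some $j$ with $j - r = i - 1$. The key observation is that $w_m$ is inserted into column $w_m$ of the unshifted $Q_w$, so the two appearances of $i$ at positions $k$ and $k+2$ land on a common NW--SE diagonal of $Q_w^{\rightarrow}$ in consecutive rows; the SYT property then forces $k+1$ into the unique cell $(r, j+1)$ squeezed by $k < k+1 < k+2$ and by the column-$(i{+}1)$ constraint. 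Call such a configuration an \emph{L-pattern}. Summing over all $w$ via linearity, the theorem becomes the identity $\sum_{\tab \in \T{n}} \#L(\tab) = |\T{n}|$, i.e.\ the average number of L-patterns per tableau equals one.

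To establish this identity I would construct a bijection between pairs $(\tab, L)$ and tableaux in $\T{n}$ using partial promotion, paralleling \cref{adjacencies}. If $L$ has smallest entry $k$, the promotion path of $\partial_k$ begins precisely at the three cells of $L$: the east neighbour of $k$ carries $k+1$, forcing the first step east, and then the south neighbour of $k+1$ carries $k+2$, forcing the second step south. This rigidity suggests defining the bijection by applying a canonical promotion-type operator to $\tab$ that is guided by the L-pattern, with the inverse recovering the pivot $k$ from a structural feature of the target tableau---for instance the cell at which the inverse promotion path exits a distinguished column, analogous to the role played by $k(\tab)$ in the proof of \cref{adjacencies}.

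The main obstacle will be pinning down the correct canonical assignment so that the map is truly a bijection. Simple guesses such as $(\tab, L) \mapsto \partial_k(\tab)$ collapse distinct pairs: one checks already on $\T{4}$ that both nontrivial pairs $(\tab, L)$ map to the same tableau. A well-behaved bijection must therefore incorporate additional data, for instance by composing partial promotion with the anti-diagonal flip from \cref{symmetry}, or by marking the cell at which the $\partial_k$-path leaves the L-pattern. This refinement is exactly the technical heart of Schilling--Thi\'ery--White--Williams~\cite{STWW}; adapting their promotion-orbit bookkeeping to the shifted-SYT language of \cref{S:adj} is where the real work lies, and carrying it through would complete the proof.
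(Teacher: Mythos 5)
This statement is not proved in the paper at all: it is quoted verbatim from Schilling--Thi\'ery--White--Williams \cite[Thm.~1.3]{STWW} purely for comparison with \cref{expected_adjacencies}, so there is no internal proof to match your argument against. Judged on its own terms, your proposal has a genuine gap, and you have in fact located it yourself. The reduction in your first paragraph is fine: a factor $i(i+1)i$ at positions $k,k+1,k+2$ does force $k$ and $k+2$ into consecutive rows of column $i$ of the unshifted $Q_w$, hence into diagonally adjacent cells of $Q_w^{\rightarrow}$, and the increasing conditions pin $k+1$ to the cell between them; so the theorem is equivalent to $\sum_{\tab\in\T{n}}\#L(\tab)=\abs{\T{n}}$. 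But the entire content of the theorem is the bijection (or weighted count) certifying that identity, and your second and third paragraphs do not supply one. You observe correctly that the naive map $(\tab,L)\mapsto\partial_k(\tab)$ is not injective, and then defer the repair to ``adapting their promotion-orbit bookkeeping,'' i.e.\ to the very argument of \cite{STWW} that you are supposed to be reproducing. A proof sketch whose final step is ``carrying this through would complete the proof'' is a statement of intent, not a proof.

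The reason the analogy with \cref{adjacencies} does not transport automatically is worth naming: in \cref{adjacencies} the inverse promotion path always terminates at the largest entry, whose location is a global, $\tab$-intrinsic datum, and $k(\tab)$ is recovered as the first time that path crosses from column $c+1$ to column $c$; injectivity comes for free because $k(\tab)$ is read off the \emph{target} tableau. For the L-pattern there is no analogous column-crossing event that singles out the pivot $k$ inside $\partial_k(\tab)$, which is exactly why distinct pairs collapse. The argument in \cite{STWW} circumvents this not by a refined one-step bijection but by summing over entire promotion orbits: promotion on $\T{n}$ has order dividing $2N$, and the number of Yang--Baxter configurations encountered along an orbit can be computed exactly, after which averaging gives the expectation $1$. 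If you want a self-contained proof in the language of \cref{S:adj}, you must either carry out that orbit computation or exhibit the missing injective correspondence; at present neither is done.
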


Some other corollaries of \cref{adjacencies} are listed next. 
\begin{cor}
For any strict partition $\lambda$, the expected number of horizontal adjacencies in column $c< \ell(\lambda)$ of a random shifted SYT of shape $\shyd{\lambda}$ is 1.
\end{cor}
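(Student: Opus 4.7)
The plan is to apply \cref{adjacencies} and reduce the claim to a purely geometric observation about the outer corners of $\shyd{\lambda}$. By \cref{adjacencies}, summing over all shifted SYT of shape $\shyd{\lambda}$, the number of horizontal adjacencies in column $c$ equals the number of shifted SYT whose largest entry $|\lambda|$ lies in column $c+1$ or greater. Dividing by $\shf{\lambda}$ turns the desired expected value into the probability that the largest entry occupies a cell in column $\geq c+1$, so it suffices to show that this probability equals $1$ whenever $c < \ell(\lambda)$.

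To finish, I would verify that for any strict partition $\lambda$, every outer corner of $\shyd{\lambda}$ lies in column $\geq \ell(\lambda)$. Since the largest entry $|\lambda|$ of any shifted SYT must occupy an outer corner, this bound immediately yields the claim. The key point is that row $i$ of $\shyd{\lambda}$ ends at column $\lambda_i + i - 1$, so every outer corner has column index of this form. Strictness of $\lambda$ implies $\lambda_i \geq \lambda_{\ell(\lambda)} + (\ell(\lambda) - i) \geq \ell(\lambda) - i + 1$, and therefore $\lambda_i + i - 1 \geq \ell(\lambda)$. Combined with the hypothesis $c < \ell(\lambda)$, every outer corner lies in column $\geq \ell(\lambda) \geq c+1$, so with probability $1$ the largest entry sits in column $\geq c+1$, completing the argument.

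There is no substantial obstacle here: the content of this corollary resides entirely in \cref{adjacencies}, and the remainder is a short consequence of the definition of $\shyd{\lambda}$ together with the strictness of $\lambda$. The most delicate bookkeeping is merely to confirm that the column-index formula $\lambda_i + i - 1$ for the rightmost cell in row $i$ behaves as stated for the shifted (rather than straight) diagram, which is immediate from the defining shift $j \mapsto j + i - 1$.
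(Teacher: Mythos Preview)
Your proposal is correct and matches the paper's intended argument: the corollary is stated without proof as an immediate consequence of \cref{adjacencies}, and you have spelled out exactly the geometric observation needed---that every outer corner of $\shyd{\lambda}$ lies in column at least $\ell(\lambda)$, so for $c<\ell(\lambda)$ the largest entry always sits in column $\geq c+1$. Your verification of the column bound $\lambda_i+i-1\geq\ell(\lambda)$ from strictness is precisely the point.
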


A (shifted) Young diagram is called a \emph{(shifted) rectangle} if there is a cell that contains the largest entry in all standard tableaux of this shape. That is $\lambda=(\lambda_1,\dots,\lambda_{\ell})$ is a shifted rectangle if and only if
$\lambda_i=\lambda_1-i+1$ for all $1\le i\le \ell$.

\begin{cor}
Let $\yd{\lambda}$ (resp.~$\shyd{\lambda}$) be a (shifted) rectangle and fix a column $c$ of $\yd{\lambda}$ (resp.~$\shyd{\lambda}$).
Then the number of adjacencies in column $c$ is equal to $f_{\lambda}$ (resp.~$\shf{\lambda}$).
\end{cor}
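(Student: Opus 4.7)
The plan is to apply Theorem \ref{adjacencies} essentially directly. The only preliminary step is to verify that in a (shifted) rectangle there is a distinguished ``last'' column in which the largest entry must sit for every (shifted) SYT of that shape. For an ordinary rectangle $\lambda=(n^m)$ the largest entry $mn$ must occupy the unique outer corner $(m,n)$, which lies in column $n$. For a shifted rectangle, the condition $\lambda_i=\lambda_1-i+1$ forces $\shyd{\lambda}$ to have the single outer corner $(\ell(\lambda),\lambda_1)$, and the largest entry $|\lambda|$ must go there; in particular it sits in column $\lambda_1$. Let $c^{*}$ denote this final column ($n$ or $\lambda_1$, respectively).

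Now fix a column $c$ strictly less than $c^{*}$ (for $c=c^{*}$ there are trivially no horizontal adjacencies in column $c$, so the statement concerns $c<c^{*}$). By Theorem \ref{adjacencies}, the total number of horizontal adjacencies in column $c$ over all (shifted) SYT of shape $\lambda$ equals the cardinality of the set $S_c(\lambda)$ of (shifted) SYT whose largest entry lies in a column $\geq c+1$. But by the observation of the previous paragraph, \emph{every} (shifted) SYT of shape $\lambda$ has its largest entry in column $c^{*}\geq c+1$. Hence $S_c(\lambda)$ is all of the (shifted) SYT of shape $\lambda$, and $|S_c(\lambda)|=f_{\lambda}$ (resp.\ $\shf{\lambda}$) by definition.

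There is really no obstacle here; the corollary is a one-line consequence of Theorem \ref{adjacencies} once the shape-specific fact that the maximum is pinned to the last column is recorded. The only place a reader might pause is in confirming the uniqueness of the outer corner in the shifted case, which is immediate from the hypothesis $\lambda_i=\lambda_1-i+1$ (the cells $(i,\lambda_1)$ for $i<\ell(\lambda)$ all have a cell below them in $\shyd{\lambda}$, and $(\ell(\lambda),\lambda_1)$ is the unique cell with neither an eastern nor a southern neighbour).
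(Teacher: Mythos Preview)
Your proof is correct and is precisely the intended argument: the corollary is an immediate specialisation of \cref{adjacencies}, since in a (shifted) rectangle the unique outer corner forces the largest entry into the final column, so $S_c(\lambda)$ coincides with the full set of (shifted) SYT of shape $\lambda$ for every $c$ strictly left of that column. The paper states the corollary without proof for exactly this reason, and your verification of the unique outer corner in both the ordinary and shifted cases fills in the only detail a reader might want spelled out.
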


\begin{cor}
Let $\Box=(n^m)$ be a rectangle.
Then
\begin{equation*}
\sum_{\yd{\mu}\subseteq\yd{\nu}\subseteq\yd{\Box}}f_{\mu}\cdot f_{\Box\setminus\nu}
=(n-1)f_{\Box},
\end{equation*}
where the sum is taken over all Young diagrams such that $\yd{\nu}=\yd{\mu}\cup\{u,\e u\}$ for some cell $u\in\yd{\Box}\setminus\yd{\mu}$.
Similarly,
\begin{equation}\label{E:shrectangle}
\sum_{\shyd{\mu}\subseteq\shyd{\nu}\subseteq\shyd{\Delta_n}}\shf{\mu}\cdot \shf{\Delta_n\setminus\nu}
=(n-2)\shf{\Delta_n},
\end{equation}
where the sum is taken over all shifted Young diagrams such that $\shyd{\nu}=\shyd{\mu}\cup\{u,\e u\}$ for some cell $u\in\shyd{\Delta_n}\setminus\shyd{\mu}$.
\end{cor}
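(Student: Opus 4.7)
The plan is to interpret both left-hand sides as counting the total number of horizontal adjacencies in (shifted) SYT of the corresponding shape, and then to apply \cref{adjacencies}.

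For the rectangle, I first set up a bijection between quadruples $(\mu,\nu,T_1,T_2)$ contributing to the sum and pairs $(T,u)$ where $T\in\syt(\Box)$ and $(T,u)$ is a horizontal adjacency. Given such a quadruple, with $\yd{\nu}=\yd{\mu}\cup\{u,\e u\}$, $T_1$ an SYT of shape $\mu$, and $T_2$ an SYT of shape $\Box\setminus\nu$, assemble $T$ by filling $\mu$ with $T_1$, placing $|\mu|+1$ and $|\mu|+2$ in $u$ and $\e u$, and filling the complement of $\nu$ with $T_2$ shifted by $|\mu|+2$. The north and west neighbours of both $u$ and $\e u$ lie in $\mu$ (if they exist at all), so the column/row monotonicity is preserved, and $T$ is a valid SYT with a horizontal adjacency at $u$. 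The inverse extracts $\mu$ as the shape of $T^{-1}([T(u)-1])$ and recovers $T_1,T_2$ by restriction. Grouping by the column $c$ of $u$ gives
\[
\sum_{\yd{\mu}\subseteq\yd{\nu}\subseteq\yd{\Box}}f_{\mu}\cdot f_{\Box\setminus\nu}
=\sum_{c=1}^{n-1}a_c,
\]
where $a_c$ is the number of horizontal adjacencies in column $c$ over all SYT of shape $\Box$.

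Now I apply \cref{adjacencies}: each $a_c$ equals the number of $T\in\syt(\Box)$ whose largest entry lies in column $c+1$ or later. The only corner of $\Box=(n^m)$ is $(m,n)$, so the largest entry of any $T\in\syt(\Box)$ is always in column $n$; hence $a_c=f_{\Box}$ for every $c\in[n-1]$, yielding the total $(n-1)f_{\Box}$.

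The shifted case is structurally identical. The same bijection reinterprets the left-hand side of \eqref{E:shrectangle} as the total number of horizontal adjacencies in shifted SYT of shape $\shyd{\Delta_n}$. Since row $i$ of $\shyd{\Delta_n}$ occupies columns $i,\ldots,n-1$, the unique corner is $(n-1,n-1)$, so the largest entry always lies in column $n-1$. Moreover, a horizontal adjacency requires $\e u\in\shyd{\Delta_n}$, confining $u$ to columns $c\in[n-2]$. By \cref{adjacencies}, each such column contributes $\shf{\Delta_n}$ adjacencies, giving $(n-2)\shf{\Delta_n}$ in total. The main point needing care is the corner analysis of the shifted staircase, which pins down the column range to $[n-2]$; this follows directly from the description of $\shyd{\Delta_n}$, so no genuine obstacle arises.
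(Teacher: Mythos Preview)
Your proof is correct and follows essentially the same route the paper intends: the corollary is stated without proof as an immediate consequence of \cref{adjacencies} (together with the preceding corollary on rectangles), and your argument makes explicit the standard bijection interpreting the left-hand sides as the total count of horizontal adjacencies, followed by the column-by-column application of \cref{adjacencies}.
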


Compare \eqref{E:shrectangle} to the identities
\begin{equation}\label{E:STWW}
\sum_{\shyd{\mu}\subseteq\shyd{\nu}\subseteq\shyd{\Delta_n}}\shf{\mu}\cdot\shf{\Delta_n\setminus\nu}
=\shf{\Delta_n},
\end{equation}
where the sum is taken over all shifted Young diagrams such that $\shyd{\nu}=\shyd{\mu}\cup\{u,\e u,\e\s u\}$ for some cell $u\in\shyd{\Delta_n}\setminus\shyd{\mu}$, and
\begin{equation}\label{E:trivial}
\sum_{\shyd{\mu}\subseteq\shyd{\nu}\subseteq\shyd{\Delta_n}}\shf{\mu}\cdot\shf{\Delta_n\setminus\nu}
=\binom{n}{2}\shf{\Delta_n},
\end{equation}
where the sum is taken over all shifted Young diagrams such that $\shyd{\nu}=\shyd{\mu}\cup\{u\}$ for some cell $u\in\shyd{\Delta_n}\setminus\shyd{\mu}$.
The first identity follows from the result in \cite{STWW} mentioned above and the second is trivially true.

\begin{prob} Can equations \eqref{E:shrectangle}, \eqref{E:STWW} and \eqref{E:trivial} be generalised?
\end{prob}

\smallskip 
 If $w=s_i v$ is a reduced word for the reverse permutation in $\S_n$, then so is $v s_{n-i-1}$. Thus we know that the probability 
of an adjacency is the same at every position in a random sorting network. This implies that the expected number of adjacencies before any position in a random sorting network grows linearly. 
Experiments suggest that the distribution of adjacencies converges also in probability to the uniform distribution, see \cref{distances}. Also the number of adjacencies in a 132-avoiding sorting network seems to grow in a nice 
manner.

\begin{figure}[t]
\begin{center}
\includegraphics[width=.45\textwidth]{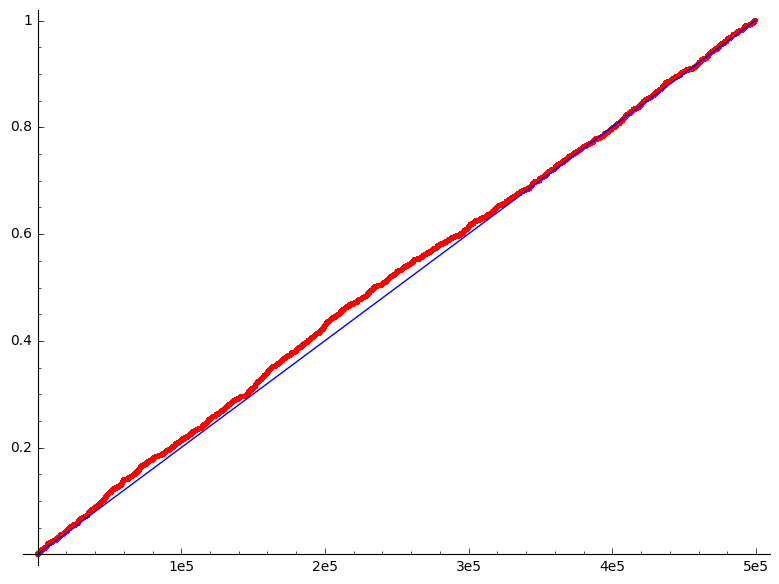}
\quad
\includegraphics[width=.45\textwidth]{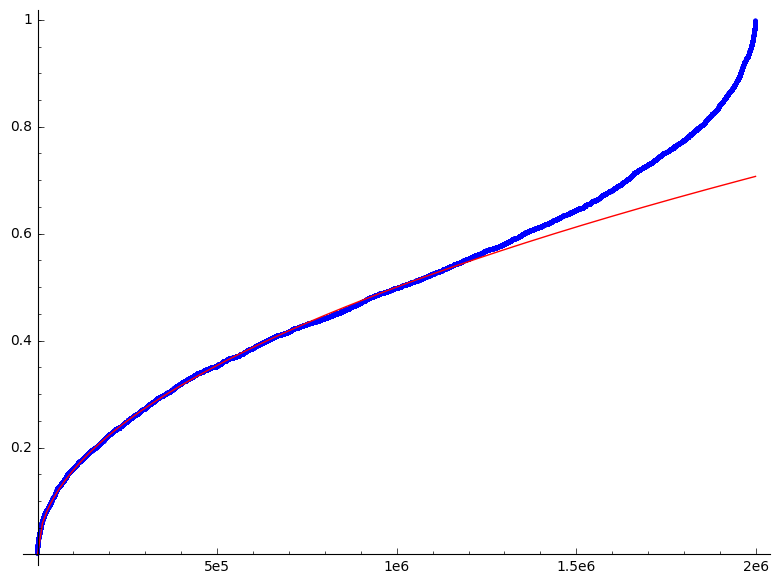}
\end{center}
\caption{The numbers of adjacencies in an initial segment of a random sorting network of size $n=1000$, (left) seems to grow linearly. The number of adjacencies in an initial segment of a random $132$-avoiding sorting network of size $n=2000$ (right) seems to grow like a square root for the first half.}
\label[fig]{distances}
\end{figure}

\begin{conj}
Let $Y_n(\alpha)$ and $X_n(\alpha)$ denote the number of adjacencies $1\leq k<\alpha\binom{n}{2}$ in a random sorting network and in a random $132$-avoiding sorting network of size $n$ respectively.

Then $Y_n(\alpha)/\E[Y_n(1)]$ converges in probability to $c\alpha$ for some constant $c$ and 
\begin{equation*}
\lim_{n\to\infty}\P\left(\max_{0 \leq \alpha \leq 1}\abs{X_n(\alpha)/(2(n-2))-g(\alpha)}>\epsilon\right)=0,
\end{equation*} where
\begin{equation*}
g(\alpha)=
\begin{cases}
\sqrt{\frac{\alpha}{2}}&\quad\text{if }\alpha\in[0,\frac{1}{2}],\\
1-\sqrt{\frac{1-\alpha}{2}}&\quad\text{if }\alpha\in[\frac{1}{2},1].
\end{cases}
\end{equation*}
\end{conj}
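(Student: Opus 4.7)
The conjecture has two separate parts, which I would attack independently. For the first (linear growth in a general random sorting network), the key observation---made in the paragraph just before the conjecture---is that $s_i v$ is a reduced word for $\wo$ iff $v s_{n-i-1}$ is; this yields a measure-preserving bijection on $n$-element sorting networks that maps an adjacency at position $k$ to one at position $k+1$ (or at position $N-1$, by iteration). Consequently the probability $p_n$ that position $k$ is an adjacency is independent of $k$, and $\E[Y_n(\alpha)]/\E[Y_n(1)] = \alpha + O(1/N)$, which gives convergence of expectations at once. To upgrade to convergence in probability I would apply Chebyshev, reducing the problem to controlling two-point correlations $\P(\text{positions } k_1 \text{ and } k_2 \text{ both adjacencies})$ and showing they decorrelate when $|k_1-k_2|$ is large. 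This decorrelation step is the main difficulty for this part and likely requires input from Dauvergne's proof~\cite{D18} of the AHRV conjectures (e.g.\ smoothness of the limiting Archimedean flow, which would force adjacency events at macroscopically separated times to be asymptotically independent).

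For the second part (square-root growth for 132-avoiding networks), I would use the Edelman--Greene correspondence to rewrite $X_n(\alpha)$ as the number of horizontal and vertical adjacencies in a uniformly random $T \in \T{n}$ whose smaller label is at most $\alpha N$. The reversal symmetry of \cref{symmetry} then gives $X_n(\alpha) \overset{d}{=} 2(n-2) - X_n(1-\alpha)$ up to $O(1)$ boundary terms, consistent with $g(\alpha)+g(1-\alpha)=1$, so it suffices to treat $\alpha \in [0,1/2]$. The central quantity is
\[
p_k = \P(k \text{ is an adjacency}) = \sum_{u} \bigl[\P(T(u)=k,\,T(\e u)=k+1) + \P(T(u)=k,\,T(\s u)=k+1)\bigr],
\]
summed over cells $u \in \shyd{\Delta_n}$. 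Each summand is a ratio of shifted-hook products---equivalently, via \cref{fL}, of ordinary hook products in the shift-symmetric partition. Using \cref{limit_shape} and a local asymptotic analysis, I would argue that the dominant contribution to $p_k$ comes from cells $(u,v)$ near the level curve $L=k/N$, and that
\[
p_k \sim \frac{1}{(n-1)\sqrt{2\alpha}} \quad \text{for } \alpha = k/N \in (0,1/2].
\]
Summing gives $\E[X_n(\alpha)] \sim (n-2)\sqrt{2\alpha} = 2(n-2)\,g(\alpha)$, matching the conjectured expression. Concentration would then follow from a second-moment estimate via two-point correlations $\P(k_1,k_2 \text{ both adjacencies})$, and uniform convergence over $\alpha$ from monotonicity of $X_n(\alpha)$ together with an $\varepsilon$-net.

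The main obstacle in Part 2 is the sharp estimate of $p_k$. \cref{limit_shape} delivers uniform convergence of $T(u)/N$ to $L(i/n,j/n)$, but the adjacency probability requires resolving $T(u)$ on the scale of a single step---essentially a local central limit theorem (or sharp edge asymptotics) for individual entries of random shifted staircase SYT. Such fluctuation results are available for square SYT through determinantal techniques, so one natural route is to transfer them to the shifted staircase using the shift-symmetric correspondence of \cref{fL}, which already underlies the derivation of the limit shape in \cref{S:shape}. Turning that correspondence into a statement sharp enough at the $O(1)$ scale to pin down consecutive entries is, I expect, the crux of the problem.
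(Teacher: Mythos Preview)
The statement is labeled a \emph{Conjecture} in the paper and is presented without proof; the only supporting evidence given is the numerical experiments in \cref{distances}. There is therefore no argument in the paper against which to compare your proposal.

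What you have written is a reasonable plan of attack, not a proof, and you are explicit about this. A few remarks. For Part~1, your use of the cyclic shift $s_iv\mapsto vs_{n-i-1}$ to show that $\P(\text{position }k\text{ is an adjacency})$ is independent of $k$ is exactly the observation the paper makes just before the conjecture, and it does give $\E[Y_n(\alpha)]/\E[Y_n(1)]\to\alpha$; the genuine work is entirely in the decorrelation step, and invoking \cite{D18} at that level of vagueness is not yet an argument. For Part~2, your heuristic $p_k\sim 1/((n-1)\sqrt{2\alpha})$ integrates to roughly $n\sqrt{\alpha/2}$ on $[0,\alpha N]$, which is off from the target $2(n-2)\sqrt{\alpha/2}$ by a factor of~$2$; the missing factor presumably comes from counting horizontal and vertical adjacencies separately, so check that bookkeeping. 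More seriously, as you acknowledge, extracting the probability that two \emph{specific} consecutive integers sit in adjacent cells requires control of individual entries of $T$ at scale $O(1)$, which is far finer than anything \cref{limit_shape} provides. Until that local limit theorem (or an equivalent device) is in hand, the argument remains a heuristic.
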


As an interesting problem for further study, we would also like to mention the behaviour of distances in value of adjacent elements in sorting networks, see \cref{adj_diff}. For example, if $\{w_k,w_{k+1}\}=\{j,j+2\}$ in a 132-avoiding sorting network, 
then $k$ and $k+1$ are diagonally adjacent in the corresponding shifted SYT $Q_w^{\rightarrow}$. 

\begin{figure}[ht]
\begin{center}
\includegraphics[width=.45\textwidth]{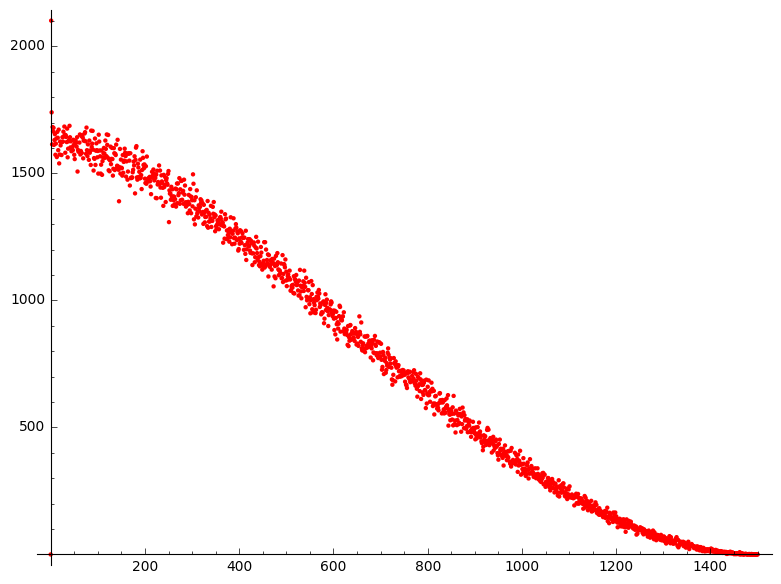}
\quad
\includegraphics[width=.45\textwidth]{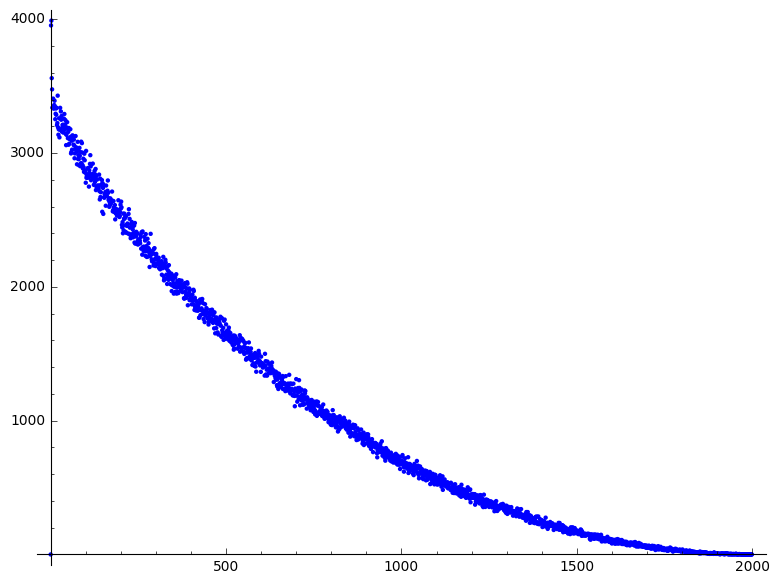}
\end{center}
\caption{The number of occurrences of distances between adjacent entries in a random sorting network of size $n=1500$ (left) and a random $132$-avoiding sorting network of size $n=2000$ (right).}
\label[fig]{adj_diff}
\end{figure}

\section*{Acknowledgements}
The authors thank Dan Romik for an informative correspondence.
This paper benefited from experimentation with \texttt{Sage}~\cite{sage} and its combinatorics features developed by the \texttt{Sage-Combinat} community~\cite{Sage-Combinat}.

\bibliographystyle{amsalpha}
\bibliography{132avoiding}
\end{document}